\newcommand\numberthis{\addtocounter{equation}{1}\tag{\theequation}}
	\newtheorem*{theorem*}{Theorem}
    \newtheorem*{proposition*}{Proposition}
    \newtheorem*{lemma*}{Lemma}
	\newtheorem*{corollary*}{Corollary}
    \newtheorem*{application*}{Application}
    \newtheorem*{example*}{Example}
    \newtheorem*{conjecture*}{Conjecture}
    \newtheorem*{definition*}{Definition}
    \newtheorem*{construction*}{Construction}
    \newtheorem*{assumption*}{Assumption}
    \newtheorem*{remark*}{Remark}
    \newtheorem*{notation*}{Notation}
    \newtheorem*{problem*}{Problem}
    \definecolor{shadecolor}{gray}{0.9}
    \declaretheoremstyle[
    headfont=\normalfont\bfseries,
    notefont=\mdseries, notebraces={(}{)},
    bodyfont=\normalfont,
    postheadspace=0.5em,
    spaceabove=1pt,
    mdframed={
      skipabove=8pt,
      skipbelow=8pt,
      hidealllines=true,
      backgroundcolor={shadecolor},
      innerleftmargin=4pt,
      innerrightmargin=4pt}
    ]{shaded}
	\declaretheorem[style=shaded,
                    name=Theorem,
					style=plain,
					numberwithin=section,
					refname={theorem,theorems},
					Refname={Theorem,Theorems}]
					{theorem}
	\declaretheorem[style=shaded,
                    name=Proposition,
					sibling=theorem,
					style=plain,
					refname={proposition,propositions},
					Refname={Proposition,Propositions}]
					{proposition}
	\declaretheorem[style=shaded,
                    name=Lemma,
					sibling=theorem,
					style=plain,
					refname={lemma,lemmas},
					Refname={Lemma,Lemmas}]
					{lemma}
	\declaretheorem[style=shaded,
                    name=Corollary,
					sibling=theorem,
					style=plain,
					refname={corollary,corollaries},
					Refname={Corollary,Corollaries}]
					{corollary}
	\declaretheorem[style=shaded,
                    name=Definition,
					sibling=theorem,
					style=definition,
					refname={definition,definitions},
					Refname={Definition,Definitions}]
					{definition}
	\declaretheorem[style=shaded,
                    name=Assumption,
					sibling=theorem,
					style=definition,
					refname={assumption,assumptions},
					Refname={Assumption,Assumptions}]
					{assumption}
	\declaretheorem[style=shaded,
                    name=Remark,
					sibling=theorem,
					style=definition,
					refname={remark,remarks},
					Refname={Remark,Remarks}]
					{remark}
\newcommand{\cmark}{\ding{51}}%
\newcommand{\xmark}{\ding{55}}%
\renewcommand{\a}{\alpha}
\renewcommand{\b}{\beta}
\newcommand{\g}{\gamma}
\renewcommand{\d}{\delta}
\newcommand{\h}{\eta}
\renewcommand{\l}{\lambda}
\newcommand{\m}{\mu}
\renewcommand{\r}{\rho}
\newcommand{\s}{\sigma}
\renewcommand{\t}{\tau}
\newcommand{\N}{\mathbb{N}}
\newcommand{\R}{\mathbb{R}}
\DeclareMathOperator{\E}{\mathbb{E}}
\newcommand{\tn}{\textnormal}
\newcommand{\q}{\quad}
\title{Stochastic Polyak Step-sizes and Momentum: Convergence Guarantees and Practical \\ Performance}
\author{Dimitris Oikonomou \\
CS \& MINDS \\
Johns Hopkins University \\
\texttt{doikono1@jh.edu} \\
\And
Nicolas Loizou \\
AMS \& MINDS \\
Johns Hopkins University \\
\texttt{nloizou1@jh.edu} \\
}
\begin{document}

\maketitle

\begin{abstract}
    Stochastic gradient descent with momentum, also known as Stochastic Heavy Ball method (SHB), is one of the most popular algorithms for solving large-scale stochastic optimization problems in various machine learning tasks. In practical scenarios, tuning the step-size and momentum parameters of the method is a prohibitively expensive and time-consuming process. In this work, inspired by the recent advantages of stochastic Polyak step-size in the performance of stochastic gradient descent (SGD), we propose and explore new Polyak-type variants suitable for the update rule of the SHB method. In particular, using the Iterate Moving Average (IMA) viewpoint of SHB, we propose and analyze three novel step-size selections: MomSPS$_{\max}$, MomDecSPS, and MomAdaSPS. For MomSPS$_{\max}$, we provide convergence guarantees for SHB to a neighborhood of the solution for convex and smooth problems (without assuming interpolation). If interpolation is also satisfied, then using MomSPS$_{\max}$, SHB converges to the true solution at a fast rate matching the deterministic HB. The other two variants, MomDecSPS and MomAdaSPS, are the first adaptive step-size for SHB that guarantee convergence to the exact minimizer - without a priori knowledge of the problem parameters and without assuming interpolation. Our convergence analysis of SHB is tight and obtains the convergence guarantees of stochastic Polyak step-size for SGD as a special case. We supplement our analysis with experiments validating our theory and demonstrating the effectiveness and robustness of our algorithms. 
\end{abstract}

\vspace{-2mm}
\section{Introduction}
\vspace{-1mm}

We consider the unconstrained finite-sum optimization
problem,
\begin{equation}
    \label{eq:main-problem}
    \min_{x\in\R^d}\left[f(x)=\frac{1}{n}\sum_{i=1}^nf_i(x)\right],
\end{equation}
where each $f_i:\R^d\to\R$ is convex, smooth, and lower bounded by $\ell_i^*$. Let $X^*$ be the set of minimizers of (\ref{eq:main-problem}). We assume that $X^*\neq\emptyset$ and we fix $x^*\in X^*$. This problem is the cornerstone of machine learning tasks, \citep{hastie2009elements}, where $x$ corresponds to the model parameters, $f_i(x)$ represents the loss on the training point $i$, and the aim is to minimize the average loss $f(x)$ across training points. 

When $n$ is large, stochastic gradient methods are the preferred methods for solving \eqref{eq:main-problem} mainly because of their cheap per iteration cost. One of the most popular stochastic algorithms for solving such large-scale machine learning optimization problems is stochastic gradient descent (SGD) with momentum, \citep{sutskever2013importance}, a.k.a. stochastic heavy ball method (SHB) given by: 
\begin{equation}
    \label{eq:shb}
    x^{t+1}=x^t-\g_t \nabla f_{S_t}(x^t) +\b_t(x^t-x^{t-1}).\tag{SHB}
\end{equation}
where $S_t\subseteq[n]$ a random subset of data-points (mini-batch) with cardinality $B$ sampled independently at each iteration $t$, and $\nabla f_{S_t}(x^t)=\frac{1}{B}\sum_{i\in S_t}\nabla f_i(x^t)$ is the mini-batch gradient. Here $\g_t>0$ is the step-size/learning rate at iteration $t$ while $\b_t\geq0$ represents the momentum parameter. When the momentum parameter $\b_t=0, \forall t\geq 0$, then the update rule \ref{eq:shb} is equivalent to the well-studied mini-batch SGD, $x^{t+1}=x^t-\g_t \nabla f_{S_t}(x^t)$, \citep{robbins1951stochastic}, which has been efficiently analyzed under different properties of problem \eqref{eq:main-problem} and different step-size selections $\gamma_t$ \citep{NemYudin1983book,nemirovski2009robust,HardtRechtSinger-stability_of_SGD,needell2014stochastic,nguyen2018sgd,gower2019sgd,gower2021sgd}. Additionally, when the cardinality of the random subset $S_t$ is $B=n$, then the update rule of \ref{eq:shb} is equivalent to the deterministic heavy ball method (HB) proposed by \citet{polyak1964some}, as a way to improve the convergence behavior of deterministic Gradient Descent (GD). 

There is a rich literature on the convergence of \ref{eq:shb} and HB in different scenarios. In \citet{polyak1964some}, it was proved that for a specific choice of the step-size $\g$ and the momentum parameter $\b$, the HB method enjoys an accelerated linear convergence when minimizing strongly convex quadratic functions while more recently, \citet{ghadimi2015global} proved a global sublinear convergence guarantee for HB for convex and smooth functions. In the stochastic setting, several works focus on convergence guarantees of \ref{eq:shb} under constant step-size and momentum parameters \citep{ma2018quasi,kidambi2018insufficiency,yan2018unified,gitman2019understanding,liu2020improved}. However, in practical scenarios, these choices suffer from a prohibitively expensive and time-consuming hyper-parameter tuning process. This has motivated a large body of research on the development of adaptive \ref{eq:shb} - a method that adapts their parameters using information collected during the iterative process. Such analysis is challenging, and the current adaptive versions of \ref{eq:shb} either focus on the full batch setting (deterministic) \citep{barre2020complexity,saab2022adaptive,wang2023generalized}, or assume that an interpolation condition is satisfied \citep{schaipp2023momo} or focus on moving averaged gradient (a different form of momentum)~\citep{wang2023generalized}. 

Previous studies in the fully stochastic (non-interpolated) scenario have predominantly concentrated on analyzing adaptive versions of SGD, with limited attention given to developing adaptive variants for the \ref{eq:shb}. In this work, we take inspiration from the recently introduced and highly efficient Polyak-type adaptive step-sizes for SGD and investigate its applicability and extension to \ref{eq:shb}.

\vspace{-2mm}
\subsection{Main Contributions}
\vspace{-1mm}

Our main contributions are summarized below.

\textbf{$\diamond$ Efficient Polyak Step-sizes via IMA viewpoint.} 
We explain and illustrate by experiment (see \Cref{fig:mosps_vs_naive-snip})  why naively using SPS$_{\max}$ of \citet{loizou2021stochastic} as a step-size $\gamma_t$ in the update rule of \ref{eq:shb} is not robust, leading to divergence even in simple problems. To resolve this issue, we provide an alternative way of selecting Polyak-type step-sizes for \ref{eq:shb} via the Iterate Moving Average viewpoint from \citet{sebbouh2021almost}. Through our approach, we propose three novel adaptive step-size selections, namely \ref{eq:mospsmax}, \ref{eq:modecsps}, \ref{eq:moadasps}. Each of the proposed step-sizes depends on the choice of the momentum parameter $\beta$ adding further stability to \ref{eq:shb} and comes with specific benefits over their constant step-size counterparts or other adaptive variants of \ref{eq:shb}. 

\textbf{$\diamond$ MomSPS$_{\max}$: Convergence of SHB in non-interpolated setting.} 
Our first step-size selection of \ref{eq:shb} is \ref{eq:mospsmax}, which has a similar structure to SPS$_{\max}$ of \citet{loizou2021stochastic} but includes also $(1-\beta)$ in its expression. For this choice, we provide convergence guarantees for \ref{eq:shb} to a neighborhood of the solution for convex and smooth problems. Our analysis provides the first convergence guarantees of adaptive \ref{eq:shb} using Polyak-type step-size. Previous works on Polyak step-size with momentum in the stochastic setting have guarantees either only under the interpolation setting \citep{schaipp2023momo} or for a moving averaged gradient momentum \citep{wang2023generalized}. In addition, as a corollary of our main theoretical results, we show that \ref{eq:mospsmax} under the interpolation setting and in deterministic scenarios (full batch) converges to the true solution at a fast rate matching the deterministic HB. 

\textbf{$\diamond$ Convergence of SHB to exact solution via MomDecSPS and MomAdaSPS.} 
Inspired by two recent Polyak-type step-size selections for SGD, the DecSPS in \citet{orvieto2022dynamics} and AdaSPS in \citet{jiang2023adaptive}, we propose two new ways for tuning the step-size for \ref{eq:shb}. These are \ref{eq:modecsps} and \ref{eq:moadasps}. Our analysis provides the first $O(1/\sqrt{T})$ convergence guarantees to the exact solution in the non-interpolated regime for a Polyak-type adaptive variant of \ref{eq:shb}. Our proposed update rules converge for any choice of momentum parameter $\b\in[0,1)$, which makes them particularly useful in practical scenarios.  

\textbf{$\diamond$ Tight Convergence Guarantees.} 
All of our convergence guarantees are \emph{true} generalizations of the theoretical analysis of SGD using SPS$_{\max}$, DecSPS, and AdaSPS. That is, if $\beta=0$ (no momentum) in the update rule of \ref{eq:shb}, our theorems obtain as a special case the best-known convergence rates of Polyak-type step-size for SGD, highlighting the tightness of our analysis. See also \Cref{tab:results} for a summary of our main complexity results and a comparison with closely related works. 

\textbf{$\diamond$ Further Convergence Results.} 
As a byproduct of our theoretical analysis, we provide two interesting corollaries: a \textit{novel analysis of constant step-size \ref{eq:shb}} (as a corollary of our Theorem on \ref{eq:mospsmax}) and the first \textit{robust convergence of \ref{eq:shb}} via our theorem on \ref{eq:moadasps}. For a constant step-size, the \Cref{cor:shb-const} of our \Cref{thm:shb-sps-max} allows larger step-sizes than the analysis of \ref{eq:shb} in \citet{liu2020improved} and provides convergence without assuming the restrictive bounded variance condition (there exist $q>0$ such that $\E\|\nabla f_i(x)-\nabla f(x)\|^2 < q$). In addition, via \Cref{thm:shb-ada-sps} we provide the first robust convergence of adaptive \ref{eq:shb} that guarantees convergence to the exact solution and automatically adapts to whether our problem is interpolated or not. That is, if interpolation is assumed, then the rate of \ref{eq:shb} with \ref{eq:moadasps} is the same as the rate of \ref{eq:shb} with \ref{eq:mospsmax} (or constant step-size), and if no interpolation is assumed, then it matches the rate of \ref{eq:shb} with \ref{eq:modecsps}. The analysis achieves the best-known rates in both settings. 

\textbf{$\diamond$ Numerical Evaluation.} 
In \Cref{sec:experiments}, we verify our theoretical results via numerical experiments on various problems, demonstrating the effectiveness and practicality of our approach. An open-source implementation of our method is available at \url{https://github.com/dimitris-oik/MomSPS}.

\vspace{-4mm}
\setlength{\tabcolsep}{2.5pt}
\begin{table*}[t]
    \small
    \centering
    \begin{tabular}{llccl}
        \toprule
        \textbf{Step-size} & \textbf{Assumptions / Setting} & \textbf{Adaptive} & \textbf{Exact Convergence} & \textbf{Rate} \\
        \midrule
        Constant \citep{liu2020improved} & Knowledge of $L$ & \xmark & \xmark & $O(\frac{1}{T}+\hat{\s}^2)$ \\
        IMA \citep{sebbouh2021almost} & Knowledge of $L$ & \xmark & \cmark &  $O(\frac{1}{T}+\hat{\s}^2)$ \\
        ALR-SMAG \citep{wang2023generalized} & Moving Averaged Gradient & \cmark & \xmark & $O(\frac{1}{T}+\s^2)$\\
        \rowcolor{green!30}\ref{eq:mospsmax} (Thm \ref{thm:shb-sps-max}) & Restriction on $\b$ & \cmark & \xmark & $O(\frac{1}{T}+\s^2)$ \\
        \rowcolor{green!30}\ref{eq:modecsps} (Thm \ref{thm:shb-dec-sps}) & Bounded Iterates & \cmark & \cmark & $O(\frac{1}{\sqrt{T}})$ \\
        \rowcolor{green!30}\ref{eq:moadasps} (Thm \ref{thm:shb-ada-sps}) & Bounded Iterates & \cmark & \cmark & $O(\frac{1}{T}+\frac{\s}{\sqrt{T}})$ \\
        \bottomrule
    \end{tabular}
    \caption{Summary of the considered step-sizes and the corresponding theoretical results in the stochastic setting. All the rates are given for convex and smooth functions. The quantity of convergence in our rates is $\E[f(\overline{x}^T)-f(x^*)]$, where $\overline{x}^T=\frac{1}{T}\sum_{t=0}^Tx^t$. Here $\hat{\s}^2=\E\|\nabla f_i(x^*)\|^2<\infty$ and $\s^2=\E[f_i(x^*)-\ell_i^*]<\infty$. The \textquote{Exact Convergence} column refers to convergence to the exact solution $x^*$ with no interpolation assumption. }
    \label{tab:results}
\end{table*}

\section{Exploring the Interplay of SPS and Heavy Ball Momentum}
\vspace{-1mm}

In this section, we present the expression of Stochastic Polyak Step-size (SPS) and its different variants. We illustrate that a naive combination of SPS with momentum is not robust, leading to divergence in simple problems. We explain how we resolve this issue using the Iterate Moving Average viewpoint of \ref{eq:shb} and propose three adaptive Polyak-type step-sizes for \ref{eq:shb}.

\vspace{-1mm}
\subsection{Background on Stochastic Polyak Step-size}
\vspace{-1mm}
\label{sec:sgd-polyak}

The deterministic Polyak step-size (PS) was first introduced by \citet{polyak1987introduction} as an efficient step-size selection for GD for solving convex optimization problems. It has the following expression: $\g_t=\frac{f(x^t)-f(x^*)}{\|\nabla f(x^t)\|^2}$ which is obtained by minimizing an upper bound of the quantity $\|x^{t+1}-x^*\|^2$ in the analysis of GD. Since its original proposal, the PS has been successfully used in the analysis of deterministic subgradient methods in different settings with favorable convergence guarantees \citep{boyd2003subgradient, davis2018subgradient, hazan2019revisiting}. PS requires the prior knowledge of $f(x^*)$, which might look like a strong assumption. However, as shown in \citet{boyd2003subgradient}, this is known in several applications, including finding a point in the intersection of convex sets and positive semi-definite matrix completion.

Inspired by the convergence of PS in the deterministic setting, \citet{loizou2021stochastic} has effectively modified the Polyak step-size for the stochastic setting, achieving convergence rates comparable to those of finetuned SGD. The proposed stochastic Polyak step-size (SPS) has several benefits, including independence on parameters of the problem (e.g., $L$-smoothness or $\mu$ strong convexity) and competitive performance in over-parametrized models.  More specifically, \citet{loizou2021stochastic} proposed the SPS$_{\max}$ given below\footnote{Originally \citet{loizou2021stochastic} use $f_{S_t}^*$ instead of the lower bound $\ell_{S_t}^*$. The lower bound is due to \citet{orvieto2022dynamics}, which proves that the more relaxed lower bound can still lead to the same convergence guarantees.}:
\vspace{-3mm}
\begin{equation}
    \label{eq:spsmax}
    \g_t=\min\left\{\frac{f_{S_t}(x^t)-\ell_{S_t}^*}{c\|\nabla f_{S_t}(x^t)\|^2},\g_b\right\}.\tag{SPS$_{\max}$}
\end{equation}
Here, $\g_b>0$ is a bound that restricts SPS from being very large and is essential to ensure convergence to a small neighborhood around the solution and $c>0$ is a positive constant that depends on the function class the objective $f$ belongs to. 

As mentioned in \citet{orvieto2022dynamics}, the SPS$_{\max}$ comes with strong convergence guarantees and competitive performance; however, it has one main drawback when used in non-over-parameterized regimes: It can guarantee convergence only to a neighborhood of the solution. For this reason, \citet{orvieto2022dynamics} suggests a decreasing variant of the original SPS$_{\max}$ named DecSPS, given by:
\begin{equation}
    \label{eq:decsps}
    \g_t=\frac{1}{c_t}\min\left\{\frac{f_{S_t}(x^t)-\ell_{S_t}^*}{\|\nabla f_{S_t}(x^t)\|^2},c_{t-1}\g_{t-1}\right\},\tag{DecSPS}
\end{equation}
where $c_t$ is an increasing sequence of positive real numbers and $c_{-1}:=c_0$ and $\g_{-1}=\g_b>0$. The authors proved that SGD with \ref{eq:decsps} and $c_t=\sqrt{t+1}$, converges with a sublinear rate $O(1/\sqrt{T})$ for convex and smooth functions with bounded iterates (i.e., $D^2 := \max_{t \in [T-1]}\|x^t-x^*\|^2<\infty$).

More recently, in \citet{jiang2023adaptive}, another decreasing variant of SPS was introduced, named AdaSPS:
\begin{equation}
   \textstyle
    \label{eq:adasps}
    \g_t=\min\left\{\frac{f_{S_t}(x^t)-\ell_{S_t}^*}{c\|\nabla f_{S_t}(x^t)\|^2}\frac{1}{\sqrt{\sum_{s=0}^tf_{S_s}(x^s)-\ell_{S_s}^*}},\g_{t-1}\right\},\tag{AdaSPS}
\end{equation}
where $\g_{-1}=+\infty$ and $c>0$. For convex and $L$-smooth functions with bounded iterates, it can be shown that SGD with AdaSPS converges to an exact solution with a rate $O\left(\frac{\t^2}{T}+\frac{\t\s}{\sqrt{T}}\right)$, where $\t=2cLD^2+\frac{1}{c}$. The interesting aspect of the convergence of AdaSPS is that it provides a robust result for SGD, meaning that the method recovers the best bounds for both the interpolated ($\sigma=0$) and non-interpolated regimes.

\vspace{-1mm}
\subsection{Naive SPS in SHB}
\vspace{-1mm}
\label{sec:naive-sps}

All of the above variants of SPS were proposed and analyzed for SGD (\ref{eq:shb} with no momentum). With the increased popularity of momentum in machine learning, one can naturally ask the following question: \textit{Is it possible to combine SPS with momentum?}
This question was partially answered in \citet{wang2023generalized} when they proposed a Polyak-type step-size for a moving averaged gradient (MAG) momentum but not for the \ref{eq:shb}. Their MAG framework is not equivalent to \ref{eq:shb} when their stepsizes are adaptive and while their proposed stepsize reduces to the original \ref{eq:spsmax} stepsize when $\b=0$, their guarantees do not reduce to the original \ref{eq:spsmax} guarantees for convex functions. 

\begin{wrapfigure}{r}{0.55\textwidth}
\begin{minipage}{0.55\textwidth}
\vspace{-7mm}
\begin{figure}[H]
	\centering
    \begin{subfigure}{0.45\columnwidth}
		\includegraphics[width=\textwidth]{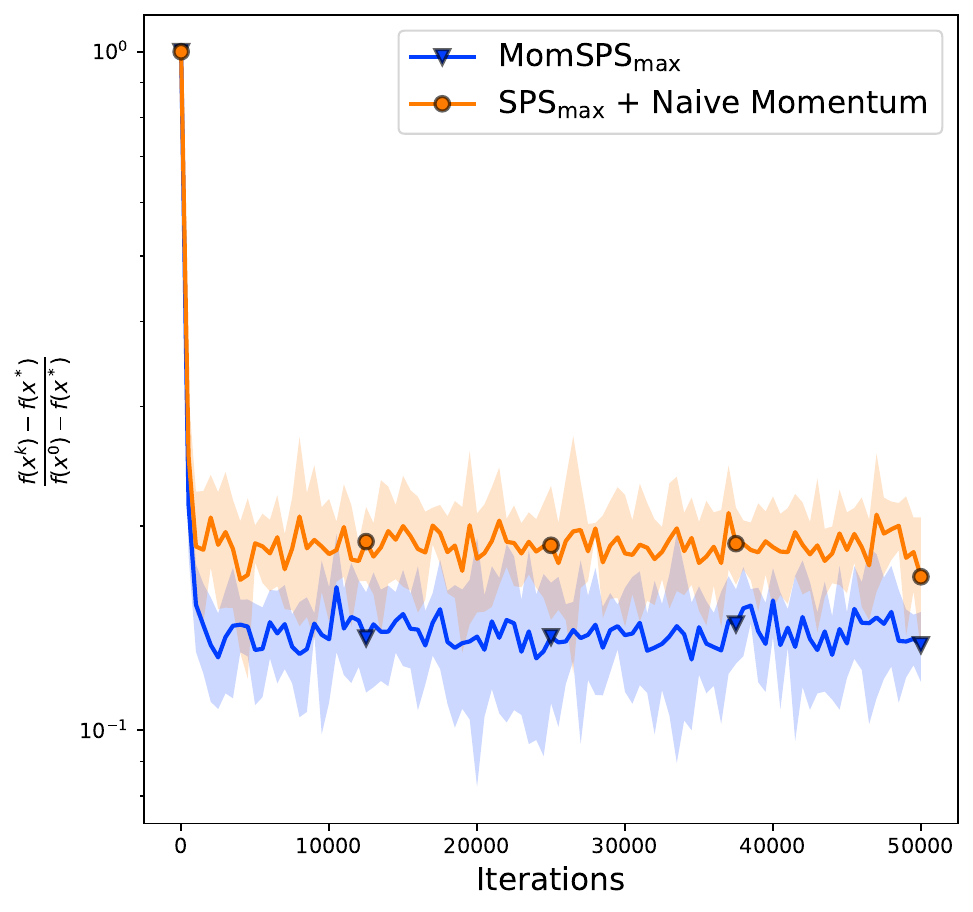}
	\end{subfigure}
    ~
	\begin{subfigure}{0.45\columnwidth}
		\includegraphics[width=\textwidth]{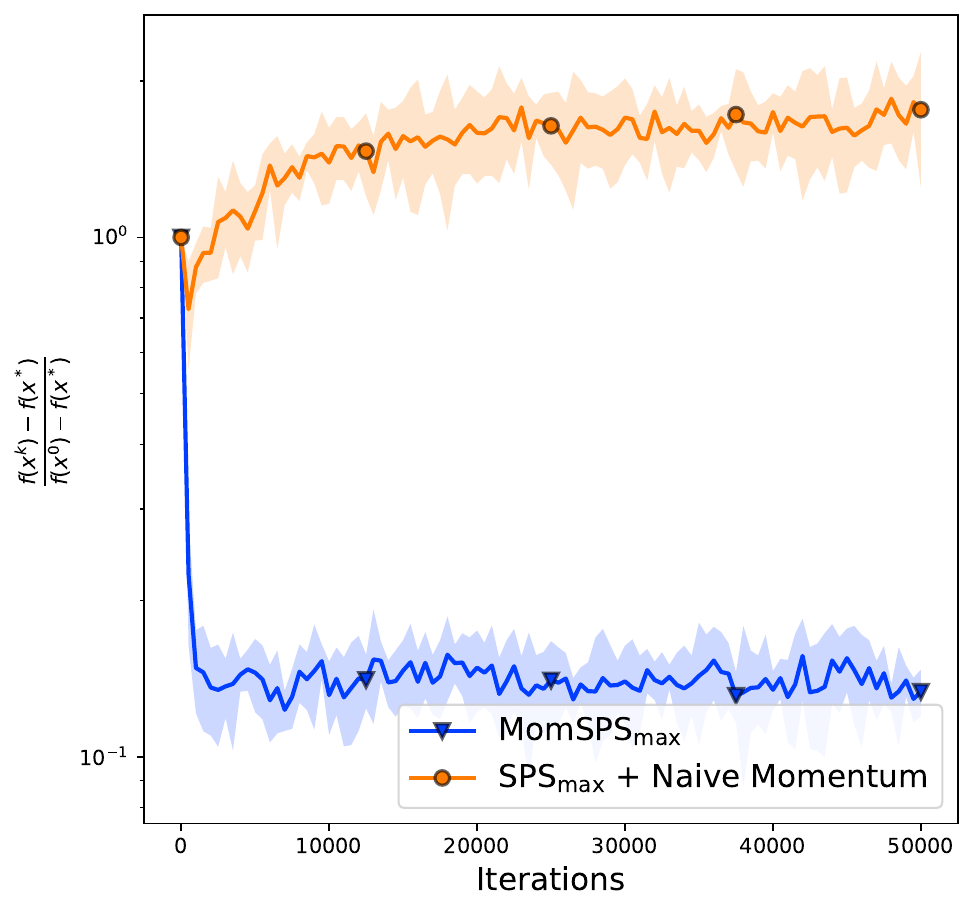}
	\end{subfigure}
	\caption{Comparison of \ref{eq:mospsmax} versus SPS$_{\max}$ with naive momentum for different momentum parameters $\b$ on a logistic regression problem. Left: $\b=0.2$, Right: $\b=0.5$}
	\label{fig:mosps_vs_naive-snip}
\end{figure}
\end{minipage}
\end{wrapfigure}

The most straightforward approach is to directly apply the SPS$_{\max}$ in the \ref{eq:shb} update rule. Let us call this update rule \textit{SPS$_{\max}$ with naive momentum}. Unfortunately, this approach does not necessarily lead to convergence for natural choices of momentum parameter $\b\in(0,1)$, as shown in \Cref{fig:mosps_vs_naive-snip}.\footnote{See also \Cref{fig:mosps_vs_naive} in Appendix for more values of $\b$.} In this experiment, even for simple convex and smooth problems like a logistic regression with synthetic data, the naive rule fails to converge when $\b$ gets larger (a typical choice for momentum parameter is $\b=0.9$). This indicates that a more careful step-size selection is needed, which may also depend on the momentum parameter. In \Cref{fig:mosps_vs_naive-snip}, we compare the SPSmax with naive momentum with one of our proposed and analyzed step-size selection \ref{eq:mospsmax}. As seen in \Cref{fig:mosps_vs_naive-snip}, when $\b$ is small, then both SPS$_{\max}$ with naive momentum and \ref{eq:shb} with \ref{eq:mospsmax} have similar behavior. When $\b=0$, the two methods have identical performance as both are reduced to the same method: SGD with SPS$_{\max}$. However, as $\b$ gets larger, the performance of SPS$_{\max}$ with naive momentum gets worse and less stable, and at some point, it diverges. On the other hand, in this example, \ref{eq:shb} with \ref{eq:mospsmax} converges for any of the selected $\b$.

\vspace{-1mm}
\subsection{Iterative Moving Average: Balance between SPS and Momentum}
\vspace{-1mm}

Having explained how naively combining SPS$_{\max}$ with heavy ball momentum can lead to divergence, in this section, we leverage the iterate moving-average (IMA) viewpoint of \ref{eq:shb} from \citet{sebbouh2021almost} to propose SPS-type adaptations that depend on momentum parameter as well for \ref{eq:shb}. As we will see later, this viewpoint provides stability and robustness to the proposed update rules.

\textbf{Iterate Moving Average (IMA).}
\citet{sebbouh2021almost} provide the IMA as an alternative way of expressing the update rule of \ref{eq:shb} and explain how the IMA formulation is crucial in comparing \ref{eq:shb} and SGD as it allows to establish connections between the step-sizes of the two methods. The Iterate Moving Average  method is given by the following update rule:
\begin{equation*}
    \label{eq:ima}
    z^{t+1}=z^t-\h_t\nabla f_{S_t}(x^t),\q x^{t+1}=\frac{\l_{t+1}}{\l_{t+1}+1}x^t+\frac{1}{\l_{t+1}+1}z^{t+1},
\end{equation*}
where $z^0=x^0$, $\h_t>0$ and $\l_t\geq0$. 
As proved in \citet{sebbouh2021almost}, if for any $t\in\N$, holds $1+\l_{t+1}=\frac{\l_t}{\b_t}\text{ and }\h_t=(1+\l_{t+1})\g_t$ then the $x_t$ iterates of the IMA method are equal to the $x_t$ iterates produced by the \ref{eq:shb} method. For completeness, we include the proof of this statement in \Cref{sec:ima}. 

In our proposed methods, we select the most common setting of constant momentum ($\b_t=\b$). Considering the equivalence between IMA and \ref{eq:shb} under the correct parameter selection, let us present the following proposition that will allow us to obtain convergence guarantees for \ref{eq:shb} via a convergence analysis of IMA. 
\begin{proposition}
    \label{pro:ima-ss}
    If $\l_t=\l\geq0$ then assuming that $\b=\frac{\l}{1+\l}$ and $\g_t=(1-\b)\h_t$ the $x_t$ iterates of the IMA method are equal to the $x_t$ iterates produced by the \ref{eq:shb} method. 
\end{proposition}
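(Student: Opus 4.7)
The plan is to invoke the general equivalence stated just before the proposition: the IMA iterates coincide with the \ref{eq:shb} iterates whenever $1+\l_{t+1}=\l_t/\b_t$ and $\h_t=(1+\l_{t+1})\g_t$ for every $t\in\N$ (a fact whose proof is deferred to Appendix~\ref{sec:ima}). So my strategy is simply to verify that, under the constant choice $\l_t=\l$ and constant momentum $\b_t=\b$, the hypotheses $\b=\frac{\l}{1+\l}$ and $\g_t=(1-\b)\h_t$ of the proposition are exactly the two conditions needed above.

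First, I would substitute $\l_t=\l_{t+1}=\l$ into the relation $1+\l_{t+1}=\l_t/\b_t$. This forces $\b_t$ to be constant and equal to $\l/(1+\l)$, which rearranges to $\b(1+\l)=\l$, i.e.\ the assumed identity $\b=\frac{\l}{1+\l}$. Second, I would note the algebraic consequence $1-\b=1-\frac{\l}{1+\l}=\frac{1}{1+\l}$, so that $1+\l_{t+1}=1+\l=\frac{1}{1-\b}$. Plugging this into $\h_t=(1+\l_{t+1})\g_t$ yields $\h_t=\frac{\g_t}{1-\b}$, which is precisely $\g_t=(1-\b)\h_t$.

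Since both conditions of the general IMA/\ref{eq:shb} equivalence are met, I can invoke that equivalence to conclude that the $x_t$ iterates produced by the two recursions coincide, which is the claim of the proposition. There is essentially no obstacle here; the proposition is a direct specialization of the earlier equivalence to constant parameters, and the only content is the elementary inversion $\b=\l/(1+\l)\iff 1-\b=1/(1+\l)$ that translates between the $(\l,\h_t)$ parametrization and the $(\b,\g_t)$ parametrization.
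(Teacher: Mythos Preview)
Your proposal is correct and follows essentially the same approach as the paper's proof: both simply specialize the general IMA/\ref{eq:shb} equivalence (conditions $1+\l_{t+1}=\l_t/\b_t$ and $\h_t=(1+\l_{t+1})\g_t$) to the constant case $\l_t=\l$, obtaining $\b=\l/(1+\l)$ and $\g_t=(1-\b)\h_t$ via the identity $1+\l=1/(1-\b)$.
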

Using the above proposition, let us provide the following corollary that explains the derivation of our proposed step-size selections for \ref{eq:shb}. 

\begin{corollary}
    \label{cor:trans-app}
    Let the step-size $\h_t$ in IMA be one of the previously proposed Polyak step-sizes: \ref{eq:spsmax}, \ref{eq:decsps} or \ref{eq:adasps} and let $\l_t=\l$. Then via \Cref{pro:ima-ss}, the \ref{eq:shb} has constant momentum parameter $\b_t=\b$ and the step-sizes are \ref{eq:mospsmax}, \ref{eq:modecsps} or \ref{eq:moadasps} respectively.
\end{corollary}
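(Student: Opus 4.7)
The proof is essentially a direct substitution argument, so I would present it as a short verification rather than a deep derivation. The plan is to apply Proposition \ref{pro:ima-ss} as a black box: once we set $\lambda_t = \lambda$, the proposition tells us that the \ref{eq:shb} iterates with momentum $\beta = \lambda/(1+\lambda)$ and step-size $\gamma_t = (1-\beta)\eta_t$ coincide with the IMA iterates driven by $\eta_t$. So the corollary reduces to checking that for each of the three choices of $\eta_t$ (namely \ref{eq:spsmax}, \ref{eq:decsps}, \ref{eq:adasps}), the quantity $(1-\beta)\eta_t$ is exactly the formula defining the corresponding Mom variant.

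First I would handle \ref{eq:spsmax}. Here $\eta_t = \min\{(f_{S_t}(x^t)-\ell_{S_t}^*)/(c\|\nabla f_{S_t}(x^t)\|^2),\, \eta_b\}$ for some upper bound $\eta_b$. Multiplying by $(1-\beta)$ pushes the scalar inside the min and yields $\gamma_t = \min\{(1-\beta)(f_{S_t}(x^t)-\ell_{S_t}^*)/(c\|\nabla f_{S_t}(x^t)\|^2),\, (1-\beta)\eta_b\}$, which matches \ref{eq:mospsmax} after identifying $\gamma_b := (1-\beta)\eta_b$. Second, for \ref{eq:decsps} I would use induction on $t$: assuming $\gamma_{t-1} = (1-\beta)\eta_{t-1}$, the identity $c_{t-1}\eta_{t-1} = c_{t-1}\gamma_{t-1}/(1-\beta)$ lets me rewrite the inner $\min$ and conclude that $\gamma_t = (1-\beta)\eta_t$ equals the \ref{eq:modecsps} recursion. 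Third, for \ref{eq:adasps} the same induction handles the $\gamma_{t-1}$ term inside the outer min, while the accumulated sum $\sum_{s=0}^{t} f_{S_s}(x^s) - \ell_{S_s}^*$ is unaffected by the scaling, so pulling out $(1-\beta)$ produces the \ref{eq:moadasps} formula.

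The only mildly subtle step is the inductive argument for \ref{eq:modecsps} and \ref{eq:moadasps}, since those step-sizes are defined via a recursion on the previous step-size: one has to verify that the $(1-\beta)$ factor propagates cleanly through the recursion. This is not a real obstacle but it is the place where a reader could be confused, so I would spell out the induction base (using the convention $\eta_{-1} = \eta_b$ or $+\infty$ respectively, together with the matching convention $\gamma_{-1} = (1-\beta)\eta_{-1}$) and the inductive step explicitly.

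In total the proof is roughly half a page: one invocation of Proposition \ref{pro:ima-ss}, three case-by-case substitutions, and a short induction for the recursive variants. No assumptions beyond those of the proposition (constant $\lambda$ and hence constant $\beta$) are required, and no smoothness or convexity enters — the statement is purely about the algebraic equivalence of two parameterizations.
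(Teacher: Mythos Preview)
Your approach is essentially the same as the paper's: invoke \Cref{pro:ima-ss} to get $\g_t=(1-\b)\h_t$, then verify case by case that this reproduces \ref{eq:mospsmax}, \ref{eq:modecsps}, \ref{eq:moadasps}. Two small points are worth correcting. First, for \ref{eq:spsmax} your identification of the bound is off: since \ref{eq:mospsmax} is defined as $(1-\b)\min\{\cdot,\g_b\}$ with the factor \emph{outside} the minimum, matching $(1-\b)\min\{\cdot,\h_b\}$ requires $\g_b=\h_b$, not $\g_b=(1-\b)\h_b$; the paper simply leaves $(1-\b)$ outside and sets $\g_b=\h_b$. Second, the induction you propose for the recursive variants is not actually needed: \Cref{pro:ima-ss} already gives $\g_s=(1-\b)\h_s$ for every index $s\ge 0$, so one may substitute $\g_{t-1}=(1-\b)\h_{t-1}$ directly (this is exactly what the paper does), with the initialization conventions $\g_{-1}=(1-\b)\h_{-1}$ handling the base case implicitly.
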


\textbf{Three New Adaptive Step-sizes for SHB.} 
Using the IMA viewpoint and \Cref{pro:ima-ss} and \Cref{cor:trans-app} let us present three Polyak-type step-size for \ref{eq:shb}.

\textbf{(i) MomSPS$_{\max}$.}
Let us start with the variant associated with the SPS$_{\max}$. That is, via \Cref{cor:trans-app} if we use SPS$_{\max}$ as a step-size for IMA, then this is equivalent to using the following step-size in the update rule of \ref{eq:shb}:
\vspace{-2mm}
\begin{equation}
    \label{eq:mospsmax}
    \g_t=(1-\b)\min\left\{\frac{f_{S_t}(x^t)-\ell_{S_t}^*}{c\|\nabla f_{S_t}(x^t)\|^2}, \g_b\right\}.\tag{MomSPS$_{\max}$}
\end{equation}
Here, the parameter $\g_b>0$ has the same purpose as in the original SPS$_{\max}$, and it is a bound that restricts \ref{eq:mospsmax} from being very large and is essential to ensure convergence to a neighborhood around the solution. It is clear from its expression that \ref{eq:mospsmax} has the same form as SPS$_{\max}$ but multiplied by a \emph{correcting momentum factor} $1-\b$. This follows from \Cref{cor:trans-app}. In practice, this small change allows the \ref{eq:shb} to be more \textquote{stable} for different momentum parameters, as we show in \Cref{fig:mosps_vs_naive-snip}. Using \ref{eq:mospsmax} and a restriction on the momentum parameter $\beta$ in \Cref{sec:convergence}, we establish $O(1/T)$ convergence for \ref{eq:shb} up to a neighborhood of the solution for convex and smooth functions. 

Our following two proposed step-size selections are variants of the two decreasing variants of Polyak step-sizes \ref{eq:decsps} and \ref{eq:adasps}. As such, we can prove \ref{eq:shb}'s convergence to the exact solution instead of a neighborhood. More importantly, using \ref{eq:modecsps} or \ref{eq:moadasps}, we can provide convergence guarantees for any choice of the momentum coefficients $\b\in[0,1)$ making \ref{eq:shb} fully adaptive (no tuning necessary). 

\textbf{(ii) MomDecSPS.}
Firstly, we propose an adaptation of DecSPS with momentum. We call this step-size selection MomDecSPS, and it is given by:
\begin{equation}
    \label{eq:modecsps}
    \g_t=\min\left\{\frac{(1-\b)[f_{S_t}(x^t)-\ell_{S_t}^*]}{c_t\|\nabla f_{S_t}(x^t)\|^2},\frac{\g_{t-1}c_{t-1}}{c_t}\right\},\tag{MomDecSPS}
\end{equation}
where $\g_{-1}:=\g_b>0$ is a step-size bound and $c_t=c\sqrt{t+1}$ for $t\geq0$ with $c_{-1}:=c_0=c>0$ be a constant to regulate the step-size. Note that $\g_t$ is indeed decreasing. This step-size is adaptive and, in its definition, does not require knowledge of any function properties (e.g., smoothness/strong convexity constants). In \Cref{sec:convergence} we prove that \ref{eq:shb} with \ref{eq:modecsps} and constant momentum $\beta\in [0,1)$ converges to exact solution with a rate $O(1/\sqrt{T})$. 

\textbf{(iii) MomAdaSPS.} 
Similar to the previous two steps, let us propose the following adaptation of AdaSPS with momentum. We call this MomAdaSPS, and is given by: 
\begin{equation}
    \textstyle
    \label{eq:moadasps}
    \g_t=\min\left\{\frac{(1-\b)[f_{S_t}(x^t)-\ell_{S_t}^*]}{c\|\nabla f_{S_t}(x^t)\|^2\sqrt{\sum_{s=0}^tf_{S_s}(x^s)-\ell_{S_s}^*}},\g_{t-1}\right\}.\tag{MomAdaSPS}
\end{equation}
By definition, this is a decreasing step-size ($\g_t \leq \g_{t-1}$). In \Cref{sec:convergence}, we prove that \ref{eq:shb} with \ref{eq:moadasps} also converges to the exact solution for any choice of $\b\in[0,1)$. More specifically, we show that \ref{eq:moadasps} converges to the exact solution with a rate $O\left(\frac{1}{T}+\frac{\s}{\sqrt{T}}\right)$. Thus, when we are in the interpolation regime (i.e., $\s^2=0$), the convergence of \ref{eq:shb} with \ref{eq:moadasps} matches  \ref{eq:shb} with \ref{eq:mospsmax} while when the setting does not satisfy interpolation condition, this became equivalent to the $O(1/\sqrt{T})$ of \ref{eq:modecsps}.  Following the terminology of \citet{jiang2023adaptive}, our results are the first \emph{robust} adaptive step-size selection for \ref{eq:shb}, as it can automatically adapt to the optimization setting (interpolation vs. non-interpolation). 

Let us close this section by mentioning two remarks related to the above three step-size selections:

\begin{remark}
    The \textquote{correcting factor} $1-\b$ is outside the minimum in the \ref{eq:mospsmax} while for the decreasing variants \ref{eq:modecsps} and \ref{eq:moadasps}, it only appears in the first term of the minimum. This follows from \Cref{pro:ima-ss} and the derivation of \Cref{cor:trans-app} and is explained in detail in \Cref{sec:ima}. In \Cref{sec:other-choices}, via experiments, we also test other variants of the above step-sizes with the correcting factor outside of the minimum. We observe that the other choices do not have good practical performance. 
\end{remark}

\begin{remark}
    For no momentum, i.e., $\b=0$, all three proposed step-size choices, \ref{eq:mospsmax}, \ref{eq:modecsps} and \ref{eq:moadasps} are reduced to \ref{eq:spsmax}, \ref{eq:decsps} and \ref{eq:adasps} and our convergence analysis recovers the convergence guarantees for SGD showing the tightness of our approach. 
\end{remark}

\vspace{-2mm}
\section{Convergence Analysis}
\vspace{-1mm}
\label{sec:convergence}

In this section, we present the convergence results for \ref{eq:shb} with all of the proposed Polyak step-sizes. For the formal definitions and helpful lemmas, see \Cref{sec:lemmas}. The proofs can be found in \Cref{sec:proofs}. For all of our results, we make the following assumption: 
\vspace{-2mm}
\begin{assumption}[Finite optimal objective difference]
    \begin{equation*}
        \s^2:=\E_{S_t}[f_{S_t}(x^*)-\ell_{S_t}^*]=f(x^*)-\E_{S_t}[\ell_{S_t}^*]<\infty
    \end{equation*}
\end{assumption}

This assumption was first introduced in \citet{loizou2021stochastic} and \citet{orvieto2022dynamics}. Note that $\s^2<\infty$ when all $f_i$ are lower bounded like we have assumed. We say that problem (\ref{eq:main-problem}) is \emph{interpolated} if $\s^2=0$. If the interpolation condition is satisfied then there exists $x^*\in X^*$ such that $f(x^*)=f_{S_t}(x^*)=\ell_{S_t}^*$ for all $S_t\subseteq[n]$. Many modern machine learning models satisfy this condition. Examples include non-parametric regression \citep{liang2020just} and over-parameterized deep neural networks \citep{zhang2021understanding,ma2018power}.

 \vspace{-2mm}
\subsection{Convergence to a neighborhood of the solution}
\vspace{-1mm}

We start with the analysis of \ref{eq:shb} with \ref{eq:mospsmax}. 
\begin{theorem}
    \label{thm:shb-sps-max}
    Assume that each $f_i$ is convex and $L_i$-smooth. Then, the iterates of \ref{eq:shb} with \ref{eq:mospsmax} with $c=1$ and $\b\in\left[0,\frac{\a}{2\g_b-\a}\right)$ where $\a=\min\left\{\frac{1}{2L_{\max}},\g_b\right\}$ and $L_{\max}=\max_i\{L_i\}$, converge as 
    \vspace{-3mm}
    \begin{equation*}
        \E[f(\overline{x}^T)-f(x^*)]\leq\frac{C_1\|x^0-x^*\|^2}{T}+C_2\s^2,
    \end{equation*}
    \vspace{-1mm}
    where $\overline{x}^T=\frac{1}{T}\sum_{t=0}^{T-1}x^t$ and the constants $C_1=\frac{1-\b}{\a\b+\a-2\b\g_b}$ and $C_2=\frac{2\g_b-\a\b-\a}{\a\b+\a-2\b\g_b}$. 
\end{theorem}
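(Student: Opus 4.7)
The strategy is to leverage the equivalence between SHB and IMA stated in \Cref{pro:ima-ss}: running \ref{eq:shb} with \ref{eq:mospsmax} is identical to running IMA with averaging parameter $\l = \b/(1-\b)$ and step-size $\h_t = \g_t/(1-\b) = \min\{(f_{S_t}(x^t)-\ell_{S_t}^*)/\|\nabla f_{S_t}(x^t)\|^2,\,\g_b\}$, which is exactly \ref{eq:spsmax} with $c=1$ driving the IMA recursion $z^{t+1}=z^t-\h_t\nabla f_{S_t}(x^t)$. I would therefore carry out the analysis in the $(z^t, x^t)$ variables and only translate back to $(x^t,\b)$ at the very end. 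Along the way I will use three standard properties of \ref{eq:spsmax}: $\h_t \leq \g_b$, $\h_t\|\nabla f_{S_t}(x^t)\|^2 \leq f_{S_t}(x^t)-\ell_{S_t}^*$, and, from the $L_{\max}$-smoothness of each $f_i$, $\h_t \geq \a = \min\{1/(2L_{\max}), \g_b\}$.

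\textbf{One-step inequality.} Starting from the identity $\|z^{t+1}-x^*\|^2 = \|z^t-x^*\|^2 - 2\h_t\langle\nabla f_{S_t}(x^t), z^t-x^*\rangle + \h_t^2\|\nabla f_{S_t}(x^t)\|^2$, I would substitute $z^t-x^* = (x^t-x^*) + \l(x^t-x^{t-1})$, bound the quadratic term by $\h_t(f_{S_t}(x^t)-\ell_{S_t}^*)$, and apply convexity of $f_{S_t}$ to both resulting inner products. The key algebraic step is to subtract and add $\ell_{S_t}^*$ and collect all three contributions to $(f_{S_t}(x^t)-\ell_{S_t}^*)$ \emph{before} invoking $\h_t \geq \a$ (combining first yields the coefficient $-\h_t(1+2\l)$, whereas bounding term-by-term would only give $-2\a(1+\l)+\g_b$, which is strictly weaker). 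Using $\h_t \leq \g_b$ pointwise on the two positive-coefficient pieces then yields the clean sample-wise bound
\[\|z^{t+1}-x^*\|^2 \;\leq\; \|z^t-x^*\|^2 - \a(1+2\l)\bigl(f_{S_t}(x^t)-\ell_{S_t}^*\bigr) + 2\g_b\bigl(f_{S_t}(x^*)-\ell_{S_t}^*\bigr) + 2\g_b\l\bigl(f_{S_t}(x^{t-1})-\ell_{S_t}^*\bigr).\]

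\textbf{Lyapunov telescoping and conclusion.} Taking expectation, using independence of $x^t,x^{t-1}$ from the fresh sample $S_t$, and the assumption $\E[f_{S_t}(x^*)-\ell_{S_t}^*] = \s^2$, I obtain a recursion of Lyapunov form
\[V_t + B\,\E[f(x^t)-f(x^*)] \;\leq\; V_{t-1} + D\s^2, \qquad V_t := \E\|z^{t+1}-x^*\|^2 + 2\g_b\l\,\E[f(x^t)-f(x^*)],\]
with $B = \a(1+2\l) - 2\g_b\l$ and $D = 2\g_b(1+\l) - \a(1+2\l)$. Substituting $\l = \b/(1-\b)$ yields $B = [\a(1+\b)-2\g_b\b]/(1-\b)$, which is positive \emph{exactly} under the stated restriction $\b < \a/(2\g_b-\a)$; this is precisely what makes the Lyapunov function telescope. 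Summing from $t=0$ to $T-1$ (with the boundary handled by $x^{-1}:=x^0$, so that the momentum inner product vanishes at $t=0$), dropping $V_{T-1}\geq 0$, dividing by $TB$, and applying Jensen's inequality to $\overline{x}^T=\tfrac{1}{T}\sum_{t=0}^{T-1}x^t$ gives $\E[f(\overline{x}^T)-f(x^*)] \leq \|x^0-x^*\|^2/(TB) + (D/B)\s^2$; a direct simplification of $1/B$ and $D/B$ in terms of $\b$ recovers the stated $C_1=(1-\b)/[\a(1+\b)-2\g_b\b]$ and $C_2=[2\g_b-\a(1+\b)]/[\a(1+\b)-2\g_b\b]$.

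\textbf{Main obstacle.} The technical hurdle will be the indefinite-sign momentum cross term $-2\h_t\l\langle\nabla f_{S_t}(x^t),\,x^t-x^{t-1}\rangle$: after convexity and splitting around $\ell_{S_t}^*$ it produces an $f_{S_t}(x^{t-1})-\ell_{S_t}^*$ quantity correlated with the random $\h_t$, so the step-size bounds must be applied pointwise in $S_t$ and not in expectation. Picking the Lyapunov coefficient on $\E[f(x^t)-f(x^*)]$ to be exactly $2\g_b\l$ is what allows this $f(x^{t-1})$ contribution to be absorbed by the previous step, and the matching of these coefficients is precisely what forces the explicit upper bound on $\b$ appearing in the theorem.
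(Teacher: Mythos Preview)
Your proposal is correct and follows essentially the same route as the paper: both pass to the IMA variables $(z^t,x^t)$, derive the identical one-step inequality by first collecting the three $(f_{S_t}(x^t)-\ell_{S_t}^*)$ contributions into the single coefficient $-\h_t(1+2\l)$ and only then applying $\a\le\h_t\le\g_b$, and finish by telescoping plus Jensen. Your Lyapunov packaging $V_t=\E\|z^{t+1}-x^*\|^2+2\g_b\l\,\E[f(x^t)-f(x^*)]$ is just a reorganisation of the paper's index-shift telescoping (the paper arrives at the same $B=\a(1+2\l)-2\g_b\l$ and $D=2\g_b(1+\l)-\a(1+2\l)$ and the same positivity condition on $B$ that forces the bound on $\b$).
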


Firstly, let us note that both constants $C_1$ and $C_2$ in \Cref{thm:shb-sps-max} are positive since the denominator of both is positive because $\b\in[0,1)$ and the numerator of $C_2$ is positive because $\g_b\geq\a>0$. The above result shows that \ref{eq:mospsmax} has a sublinear convergence to a neighborhood matching the best-known rate for \ref{eq:shb} in the convex setting, see \citet{liu2020improved}. Moreover, when $\b=0$, the update rule of \ref{eq:shb} with \ref{eq:mospsmax} becomes equivalent to SGD with SPS$_{\max}$ (no momentum) and the result of \Cref{thm:shb-sps-max} reduces to convergence guarantees provided in \citet{loizou2021stochastic}, only with a slightly better neighborhood of convergence (here when $\b=0$ we have $C_2=(2\g_b-\a)/\a$ while in the original it is $C_2=2\g_b/\a$).  

In addition, note that in \Cref{thm:shb-sps-max}, there is a restriction in the momentum coefficient. This stems from the technique used in the proof (which forces $C_1$ and $C_2$ to be positive). However, this restriction is not vital as in practical scenarios, the method converges even when $\b$ is selected outside the given interval. 

Moreover, let us highlight that the constants $C_1$ and $C_2$ are increasing when viewed as functions of $\b$ in the given interval (see also \Cref{fig:consts}). This means that our theory suggests that $\b=0$ (no momentum) is the best theoretical choice, which is typical in many works on stochastic methods with momentum \citep{wang2023generalized,loizou2020momentum}. This is not ideal but does not undermine the importance of \Cref{thm:shb-sps-max}, as this is the first result showing the convergence of \ref{eq:shb} with SPS in a non-interpolated setting. 

From \Cref{thm:shb-sps-max}, we can also deduce rates for the deterministic and interpolated regimes. 

\begin{corollary}[Deterministic Heavy Ball Method]
    \label{cor:hb-ps-max}
    Assume that $f$ is convex and $L$-smooth. Then HB with $\b\in[0,\a/(2\g_b-\a))$ and 
    \begin{equation}
        \label{eq:mopsmax}
        \g_t=(1-\b)\min\left\{\frac{f(x^t)-f(x^*)}{\|\nabla f(x^t)\|^2},\g_b\right\},\tag{MomPS$_{\max}$}
    \end{equation}
    where $\a=\min\left\{\frac{1}{2L},\g_b\right\}$, converges as $\min_{t\in[T]}\{f(x^t)-f(x^*)\}\leq\frac{(1-\b)\|x^0-x^*\|^2}{(\a\b+\a-2\b\g_b)T}$.
\end{corollary}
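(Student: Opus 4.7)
The plan is to obtain \Cref{cor:hb-ps-max} as a direct specialization of \Cref{thm:shb-sps-max} to the full-batch, single-function setting. Taking $S_t = [n]$ deterministically for every $t$ makes the \ref{eq:shb} iteration identical to HB, collapses $f_{S_t} \equiv f$ and $L_{\max} = L$, and with the natural choice $\ell_{S_t}^* = f(x^*)$ reduces \ref{eq:mospsmax} verbatim to \ref{eq:mopsmax}. Moreover
\begin{equation*}
    \s^2 \;=\; \E_{S_t}[f_{S_t}(x^*) - \ell_{S_t}^*] \;=\; f(x^*) - f(x^*) \;=\; 0,
\end{equation*}
so the noise constant vanishes. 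The admissible range for $\b$ in \Cref{thm:shb-sps-max} coincides with the one stated in the corollary, now with $\a = \min\{1/(2L),\g_b\}$, and the hypothesis that each $f_i$ is convex and $L_i$-smooth specializes to the corollary's assumption on $f$.

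Next I would invoke \Cref{thm:shb-sps-max}. Since there is no randomness in the deterministic setting the expectation is trivial, and with $\s^2 = 0$ the theorem yields $f(\overline{x}^T) - f(x^*) \leq C_1 \|x^0 - x^*\|^2 / T$ with $C_1 = (1-\b)/(\a\b + \a - 2\b\g_b)$, which is already the prefactor appearing in the corollary.

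The only nontrivial step is to upgrade this averaged-iterate bound to the best-iterate bound $\min_{t\in[T]}\{f(x^t) - f(x^*)\}$ that \Cref{cor:hb-ps-max} asserts. To this end I would inspect the proof of \Cref{thm:shb-sps-max}: that argument telescopes a per-step descent inequality into a summed bound of the form
\begin{equation*}
    \sum_{t=0}^{T-1}\bigl[f(x^t) - f(x^*)\bigr] \;\leq\; C_1 \|x^0 - x^*\|^2 + T C_2 \s^2,
\end{equation*}
and then applies Jensen's inequality on the left-hand side to obtain the statement in terms of $f(\overline{x}^T)$. Retaining the summed form and applying the elementary inequality $T \cdot \min_{t \in [T]} a_t \leq \sum_{t=0}^{T-1} a_t$ to the nonnegative sequence $a_t := f(x^t) - f(x^*)$ yields the corollary upon dividing by $T$. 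The main (and rather mild) obstacle is thus simply extracting the summed inequality from the proof of \Cref{thm:shb-sps-max} rather than reading off only its averaged conclusion; once that is done the corollary follows by inspection, with no further computation required.
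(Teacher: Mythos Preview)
Your proposal is correct and takes essentially the same approach as the paper: the corollary is stated in the paper as a direct consequence of \Cref{thm:shb-sps-max}, obtained by specializing to the full-batch setting with $\ell^* = f(x^*)$ so that $\s^2 = 0$. Your observation that the best-iterate bound requires returning to the summed inequality in the proof of \Cref{thm:shb-sps-max} (before Jensen is applied) and using $T\cdot\min_t a_t \le \sum_t a_t$ is exactly the right way to extract the $\min_{t\in[T]}\{f(x^t)-f(x^*)\}$ statement from that analysis.
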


Note that in the setting of \Cref{cor:hb-ps-max}, we have used $f(x^*)$ instead of a lower bound $\ell^*$. This is in accordance with previous work in the deterministic setting \citep{polyak1987introduction}. We highlight that in \citet{hazan2019revisiting} (PS) and \citet{wang2023generalized} (ALR-MAG), there is no upper bound in the step-size for the deterministic setting. This is due to the fact that in these works, the step-size is derived as a minimizer of an upper bound of the quantity $\|x^t-x^*\|^2$. Our step-size \ref{eq:mopsmax} is \emph{not} a minimizer of such bound when $\b>0$. Nevertheless, when $\b=0$ (no momentum), by setting $\g_b=\infty$ we recover the corresponding result in \citet{hazan2019revisiting}. In our setting, we cannot set $\g_b=\infty$ when $\b>0$ because the term $\b\g_b$ appears in the denominator of our convergence result. 

\begin{corollary}[Interpolation]
    \label{cor:shb-sps-max-inter}
    Assume interpolation ($\s^2=0$) and let all assumptions of \Cref{thm:shb-sps-max} be satisfied. Then \ref{eq:shb} with \ref{eq:mospsmax} and $\b\in[0,\a/(2\g_b-\a))$ where $\a=\min\left\{\frac{1}{2L_{\max}},\g_b\right\}$ converges as $\E[f(\overline{x}^T)-f(x^*)]\leq\frac{(1-\b)\|x^0-x^*\|^2}{(\a\b+\a-2\b\g_b)T}$, where $\overline{x}^T=\frac{1}{T}\sum_{t=0}^{T-1}x^t$. 
\end{corollary}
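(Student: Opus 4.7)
The plan is to derive this corollary as an immediate consequence of \Cref{thm:shb-sps-max} by specializing its bound to the interpolation regime. Since the hypotheses of the corollary (convexity and $L_i$-smoothness of each $f_i$, together with $\beta \in [0, \alpha/(2\gamma_b - \alpha))$ and $\alpha = \min\{1/(2L_{\max}), \gamma_b\}$) match exactly those of \Cref{thm:shb-sps-max}, the theorem applies verbatim and yields
\begin{equation*}
    \E[f(\overline{x}^T) - f(x^*)] \leq \frac{C_1 \|x^0 - x^*\|^2}{T} + C_2 \sigma^2,
\end{equation*}
with $C_1 = \frac{1-\beta}{\alpha\beta + \alpha - 2\beta\gamma_b}$ and $C_2 = \frac{2\gamma_b - \alpha\beta - \alpha}{\alpha\beta + \alpha - 2\beta\gamma_b}$.

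Next, I would invoke the interpolation assumption $\sigma^2 = \E_{S_t}[f_{S_t}(x^*) - \ell_{S_t}^*] = 0$. This kills the additive term $C_2 \sigma^2$, regardless of the specific value of $C_2$ (which is well defined and finite for all $\beta$ in the admissible range). What remains is exactly
\begin{equation*}
    \E[f(\overline{x}^T) - f(x^*)] \leq \frac{C_1 \|x^0 - x^*\|^2}{T} = \frac{(1-\beta)\|x^0 - x^*\|^2}{(\alpha\beta + \alpha - 2\beta\gamma_b)T},
\end{equation*}
which is the desired bound.

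There is really no obstacle here beyond confirming the positivity of the denominator $\alpha\beta + \alpha - 2\beta\gamma_b$ under the restriction $\beta < \alpha/(2\gamma_b - \alpha)$ (already established in the discussion following \Cref{thm:shb-sps-max}), so that the rate constant is meaningful. One could, alternatively, redo the per-iteration analysis from scratch under interpolation and obtain a slightly tighter bound by exploiting $f_{S_t}(x^*) = \ell_{S_t}^*$ directly (thereby avoiding the variance-type terms altogether), but this is unnecessary for the stated corollary: the substitution $\sigma^2 = 0$ in the theorem is sufficient and gives the claimed $O(1/T)$ rate, matching the convergence of deterministic HB from \Cref{cor:hb-ps-max}.
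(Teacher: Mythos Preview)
Your proposal is correct and matches the paper's approach exactly: the corollary is obtained immediately from \Cref{thm:shb-sps-max} by setting $\sigma^2=0$, which eliminates the $C_2\sigma^2$ term and leaves precisely the stated bound. The paper does not provide a separate proof for this corollary, treating it as a direct specialization of the theorem, which is precisely what you do.
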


In the closely related works \citet{loizou2021stochastic} (SPS$_{\max}$) and \citet{wang2023generalized} (ALR-SMAG), there is no upper bound in the proposed step-sizes for the interpolated setting. In our analysis, we focus on the fully stochastic setting (interpolation is only a corollary), and the result of \Cref{cor:shb-sps-max-inter} shows a saddle connection between the momentum parameter and the parameter $\g_b$ (we cannot select $\g_b=\infty$ when momentum is used). Nevertheless, via our corollary, when $\b=0$ by setting $\g_b=\infty$, we can recover the corresponding result in \citet{loizou2021stochastic}. 

Another Corollary of \Cref{thm:shb-sps-max} is a novel analysis for \ref{eq:shb} with constant step-size. 
\begin{corollary}[Constant Step-size]
    \label{cor:shb-const}
    Let all assumptions of \Cref{thm:shb-sps-max} be satisfied. If $\g_b\leq\frac{1}{2L_{\max}}$, then \ref{eq:shb} with $\g\leq\frac{1-\b}{2L_{\max}}$ and $\b\in[0,1)$ converges as $\E[f(\overline{x}^T)-f(x^*)]\leq\frac{\|x^0-x^*\|^2}{T\g}+\s^2$, where $\overline{x}^T=\frac{1}{T}\sum_{t=0}^{T-1}x^t$. 
\end{corollary}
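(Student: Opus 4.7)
The plan is to obtain Corollary \ref{cor:shb-const} as a specialization of Theorem \ref{thm:shb-sps-max} by choosing $\g_b$ so that the \ref{eq:mospsmax} rule degenerates to a constant step-size equal to $\g$. Concretely, I would set $\g_b := \g/(1-\b)$ and verify that the assumption $\g \le (1-\b)/(2L_{\max})$ in the corollary translates to the assumption $\g_b \le 1/(2L_{\max})$ in Theorem \ref{thm:shb-sps-max}.

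First I would show that with this choice of $\g_b$ the minimum in the definition of \ref{eq:mospsmax} is always attained at $\g_b$. This is where smoothness enters: for $L$-smooth $f_i$'s the mini-batch function $f_{S_t}$ is $L_{S_t}$-smooth with $L_{S_t} \le L_{\max}$, and since $\ell_{S_t}^*$ is a lower bound on $\inf f_{S_t}$, the standard smoothness inequality $\|\nabla f_{S_t}(x)\|^2 \le 2L_{S_t}(f_{S_t}(x) - \inf f_{S_t})$ gives
\begin{equation*}
\frac{f_{S_t}(x^t) - \ell_{S_t}^*}{\|\nabla f_{S_t}(x^t)\|^2} \;\ge\; \frac{1}{2L_{S_t}} \;\ge\; \frac{1}{2L_{\max}} \;\ge\; \g_b.
\end{equation*}
Hence the step-size reduces to $\g_t = (1-\b)\g_b = \g$ for every $t$, i.e., SHB is run with the prescribed constant $\g$.

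Next I would plug these parameters into Theorem \ref{thm:shb-sps-max}. Since $\g_b \le 1/(2L_{\max})$, the quantity $\a = \min\{1/(2L_{\max}),\g_b\}$ equals $\g_b$, so the admissible interval $\b \in [0, \a/(2\g_b - \a))$ becomes $\b \in [0, \g_b/\g_b) = [0,1)$, matching the corollary's hypothesis on $\b$. The constants then simplify cleanly:
\begin{equation*}
C_1 \;=\; \frac{1-\b}{\a\b+\a-2\b\g_b} \;=\; \frac{1-\b}{\g_b(1-\b)} \;=\; \frac{1}{\g_b}, \qquad
C_2 \;=\; \frac{2\g_b-\a\b-\a}{\a\b+\a-2\b\g_b} \;=\; \frac{\g_b(1-\b)}{\g_b(1-\b)} \;=\; 1.
\end{equation*}
Substituting $\g_b = \g/(1-\b)$ yields the bound $\frac{(1-\b)\|x^0-x^*\|^2}{T\g} + \s^2$, which, since $1-\b \le 1$, is upper bounded by $\frac{\|x^0-x^*\|^2}{T\g} + \s^2$, giving the claimed inequality.

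I do not expect a major obstacle: the result is essentially a bookkeeping consequence of Theorem \ref{thm:shb-sps-max} once one realizes that the cap $\g_b$ always wins in the minimum. The only subtle point is the identification of the effective step-size as $(1-\b)\g_b$ (not $\g_b$) because of the $(1-\b)$ factor built into \ref{eq:mospsmax}, and correspondingly the need to parameterize via $\g_b = \g/(1-\b)$ so that the final statement is expressed in terms of the actual SHB step-size $\g$ rather than the internal cap.
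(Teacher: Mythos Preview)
Your proposal is correct and is precisely the intended derivation: the paper presents this corollary as a direct specialization of \Cref{thm:shb-sps-max} without a separate written proof, and your choice $\g_b=\g/(1-\b)$ together with the smoothness lower bound $\frac{f_{S_t}(x^t)-\ell_{S_t}^*}{\|\nabla f_{S_t}(x^t)\|^2}\ge\frac{1}{2L_{\max}}\ge\g_b$ (which is exactly the content of \Cref{lem:sps-bound}) forces $\a=\g_b$ and collapses \ref{eq:mospsmax} to the constant $\g$, after which the constants $C_1,C_2$ simplify as you computed.
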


Note that in \Cref{cor:shb-const}, there is no restriction that depends on the smoothness parameter in the momentum parameter $\b$. We allow to have $\b\in[0,1)$. A similar result is obtained in \citet{liu2020improved}, where the authors establish the convergence rate $O\left(\frac{f(x^0)-f(x^*)}{T}+\frac{L\g\hat{\s}^2}{1-\b}\right)$ for the quantity $\frac{1}{T}\sum_{t=0}^{T-1}\E[\|\nabla f_{S_t}(x^t)\|^2]$ when $\g\leq\frac{(1-\b)^2}{L}\min\left\{\frac{1}{4-\b+\b^2}, \frac{1}{2\sqrt{2\b+2\b^2}}\right\}$. Here $\hat{\s}^2$ is assumed to be a bound of the variance $\E_t\|\nabla f_{S_t}(x^t)-\nabla f(x^t)\|^2\leq\hat{\s}^2$. In comparison, in our analysis, we provide the same asymptotic rate $O(1/T)$ for the common in the convex setting quantity $\E[f(\overline{x}^T)-f(x^*)]$. By comparing the two results, our step-size is larger when $\b\geq\sqrt{5}-2\approx 0.236$. However, in our result, the neighborhood of convergence is constant and does not depend on the step-size $\g$ or the momentum coefficient. For a numerical comparison of \ref{eq:shb} with the two different constant step-sizes, see \Cref{sec:shb-const}.

\vspace{-2mm}
\subsection{Convergence to the exact solution}
\vspace{-1mm}

In this section, we provide theoretical results for the adaptive decreasing step-sizes, \ref{eq:modecsps} and \ref{eq:moadasps}. The main advantage of these decreasing step-sizes is that we can guarantee convergence to the exact solution while keeping the main adaptiveness properties. Due to the nature of the decreasing step-sizes, there is no restriction in the momentum parameter, as was the case with \Cref{thm:shb-sps-max}. For the results of this section, we make the extra assumption of \emph{bounded iterates}, i.e., we assume that $D^2=\max_{t\in[T]}\|x^t-x^*\|^2$ is finite. This is a standard assumption for several adaptive step-sizes, see: \citep{reddi2019convergence,ward2020adagrad,orvieto2022dynamics,jiang2023adaptive}. 

\begin{theorem}
    \label{thm:shb-dec-sps}
    Assume that each $f_i$ is convex and $L_i$-smooth. Let $\overline{x}^T=\frac{1}{T}\sum_{t=0}^{T-1}x^t$, $\a=\min\left\{\frac{1}{2L},\g_b\right\}$ and $D^2=\max_{t\in[T]}\|x^t-x^*\|^2$. Then, the iterates of \ref{eq:shb} with \ref{eq:modecsps} with $c=1$ and $\b\in[0,1)$ converge as: 
    \begin{equation*}
        \E[f(\overline{x}^T)-f(x^*)]\leq\frac{2\b[f(x^0)-f(x^*)]}{(1-\b)T}+\frac{(1+\b)D^2}{(1-\b)\a\sqrt{T}}+\frac{2\s^2}{\sqrt{T}}.
    \end{equation*}
\end{theorem}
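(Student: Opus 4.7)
The plan is to reduce the problem to an IMA analysis via \Cref{pro:ima-ss} and \Cref{cor:trans-app}. Setting $\l=\b/(1-\b)$ and $\h_t=\g_t/(1-\b)$, one checks that $\h_t$ satisfies exactly the \ref{eq:decsps} recursion, $\h_t=\min\{(f_{S_t}(x^t)-\ell_{S_t}^*)/(c_t\|\nabla f_{S_t}(x^t)\|^2),\,c_{t-1}\h_{t-1}/c_t\}$, with initialisation $\h_{-1}=\g_b/(1-\b)$. The auxiliary sequence $z^t=x^t+\l(x^t-x^{t-1})$ (with $z^0=x^0$) satisfies $z^{t+1}=z^t-\h_t\nabla f_{S_t}(x^t)$, so the analysis reduces to controlling $\|z^t-x^*\|^2$ under an SGD-like update driven by a DecSPS-type step-size.

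I would first derive the one-step descent inequality. Squaring the $z$-update relative to $x^*$, substituting $z^t-x^*=(x^t-x^*)+\l(x^t-x^{t-1})$, applying convexity of $f_{S_t}$ to the two resulting inner products, and using the defining bound $\h_t^2\|\nabla f_{S_t}(x^t)\|^2\leq\h_t(f_{S_t}(x^t)-\ell_{S_t}^*)/c_t$ of \ref{eq:decsps} produces
\begin{equation*}
\|z^{t+1}-x^*\|^2\leq\|z^t-x^*\|^2-2(1+\l)\h_t[f_{S_t}(x^t)-f_{S_t}(x^*)]+2\l\h_t[f_{S_t}(x^{t-1})-f_{S_t}(x^*)]+\frac{\h_t}{c_t}[f_{S_t}(x^t)-\ell_{S_t}^*].
\end{equation*}
Summing from $t=0$ to $T-1$ with $\|z^T-x^*\|^2\geq 0$ and $\|z^0-x^*\|^2=\|x^0-x^*\|^2\leq D^2$, and rewriting $f_{S_t}(x^t)-\ell_{S_t}^*=[f_{S_t}(x^t)-f_{S_t}(x^*)]+[f_{S_t}(x^*)-\ell_{S_t}^*]$ so that the two $\h_t$-weighted $f_{S_t}(x^t)-f_{S_t}(x^*)$ terms merge into a coefficient $(2(1+\l)-1/c_t)\geq 1+2\l=(1+\b)/(1-\b)$, produces an aggregated inequality whose only non-standard piece is the momentum residue $\sum_t\h_t[f_{S_t}(x^{t-1})-f_{S_t}(x^*)]$.

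The main obstacle is controlling this residue, because $\h_t$ and $S_t$ are correlated, obstructing a direct conditional-expectation argument. The plan is to decompose $f_{S_t}(x^{t-1})-f_{S_t}(x^*)=[f_{S_t}(x^{t-1})-\ell_{S_t}^*]-[f_{S_t}(x^*)-\ell_{S_t}^*]$, then exploit the monotonicity $\h_t\leq c_{t-1}\h_{t-1}/c_t\leq\h_{t-1}$ of \ref{eq:decsps} to bound $\h_t[f_{S_t}(x^{t-1})-\ell_{S_t}^*]\leq\h_{t-1}[f_{S_t}(x^{t-1})-\ell_{S_t}^*]$ (valid because the bracket is non-negative). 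Since $\h_{t-1}$ is $\mathcal{F}_{t-1}$-measurable and independent of $S_t$, conditional expectation gives $\E_t\{\h_{t-1}[f_{S_t}(x^{t-1})-\ell_{S_t}^*]\}=\h_{t-1}[f(x^{t-1})-f(x^*)+\s^2]$. Re-indexing the resulting sum $\sum_{t=0}^{T-1}\h_{t-1}[f(x^{t-1})-f(x^*)]$ extracts a boundary contribution $\h_{-1}[f(x^0)-f(x^*)]=\frac{\g_b}{1-\b}[f(x^0)-f(x^*)]$ (using $x^{-1}=x^0$), which after multiplication by the prefactor $2\l=2\b/(1-\b)$ and normalisation by $T$ yields the first summand $\frac{2\b[f(x^0)-f(x^*)]}{(1-\b)T}$ of the theorem.

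To close, I would apply the \ref{eq:decsps} lower bound $\h_t\geq\a/c_t$ (proved inductively from $f_{S_t}(x^t)-\ell_{S_t}^*\geq\|\nabla f_{S_t}(x^t)\|^2/(2L_{S_t})$ and $\h_{-1}\geq\a$, using $\a=\min\{1/(2L),\g_b\}\leq\min\{1/(2L_{\max}),\g_b/(1-\b)\}$) on the non-negative quantity $\h_t[f_{S_t}(x^t)-\ell_{S_t}^*]$, take expectations so that $\E[f_{S_t}(x^t)-\ell_{S_t}^*]=\E[f(x^t)-f(x^*)]+\s^2$, and absorb the residual $\sum_{s=0}^{T-2}\h_s[f(x^s)-f(x^*)]$ from the momentum handling back into the left-hand side via the upper bound $\h_s\leq\g_b/((1-\b)c_s)$. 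Jensen's inequality on $\bar{x}^T=\frac{1}{T}\sum_{t=0}^{T-1}x^t$, together with $f(x^t)\geq f(x^*)$ and $c_t\leq c_{T-1}=\sqrt{T}$, then gives $\E[f(\bar{x}^T)-f(x^*)]\leq(1/\sqrt{T})\sum_t(1/c_t)\E[f(x^t)-f(x^*)]$, converting the $(\a/c_t)$-weighted bound into the stated $1/\sqrt{T}$ rate and matching the $\frac{(1+\b)D^2}{(1-\b)\a\sqrt{T}}$ and $\frac{2\s^2}{\sqrt{T}}$ summands. I expect the delicate part to be the bookkeeping of the $\s^2$ contributions (from both the $(f_{S_t}(x^*)-\ell_{S_t}^*)$ splitting and the momentum handling) so that the final coefficient is exactly $2\s^2/\sqrt{T}$ rather than something logarithmic in $T$.
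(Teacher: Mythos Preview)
Your reduction to IMA and the one-step inequality are correct, but the proof strategy diverges from the paper's at a crucial point and, as written, does not yield exact convergence.

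The paper does \emph{not} apply convexity of $f_{S_t}$ inside the $\h_t$-weighted inequality. Instead it first divides the identity $\|z^{t+1}-x^*\|^2=\|z^t-x^*\|^2-2\h_t\langle\nabla f_{S_t}(x^t),z^t-x^*\rangle+\h_t^2\|\nabla f_{S_t}(x^t)\|^2$ by $\h_t$, so that the inner product appears with no step-size factor. Summing and telescoping the $1/\h_t$-weighted distances (this is where the bounded-iterates constant $D_z^2$ enters, via the nonnegative increments $1/\h_{t+1}-1/\h_t$) gives $2\sum_t\langle\nabla f_{S_t}(x^t),z^t-x^*\rangle\leq D_z^2/\h_{T-1}+\sum_t\h_t\|\nabla f_{S_t}(x^t)\|^2$. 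Only now is expectation taken: since $z^t$ is $\mathcal{F}_{t-1}$-measurable, the left side becomes $2\sum_t\E\langle\nabla f(x^t),z^t-x^*\rangle$, and convexity of the \emph{deterministic} $f$ is applied afterwards. This completely sidesteps the correlation issue you flag as ``the main obstacle''.

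Your route, by contrast, produces the left-side term $(2(1+\l)-1/c_t)\,\h_t[f_{S_t}(x^t)-f_{S_t}(x^*)]$, where $\h_t$ is correlated with $S_t$ and the bracket can be negative. To lower bound it you are forced into the $\ell^*$-splitting $\h_t[f_{S_t}(x^t)-\ell_{S_t}^*]-\h_t[f_{S_t}(x^*)-\ell_{S_t}^*]$, then apply $\h_t\geq\a/c_t$ to the first piece and $\h_t\leq\h_b/c_t$ to the second. After expectation this injects a term of order $(1+\l)(\h_b-\a)\s^2\sum_t 1/c_t=O(\sqrt{T})\s^2$ on the right. When you finally divide by $(1+2\l)\a\sqrt{T}$, this becomes an $O(1)$ neighbourhood term, not $O(1/\sqrt{T})$: the method would only be shown to converge to a ball of radius proportional to $(\h_b-\a)\s^2/\a$. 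The worry you voice about a possible $\log T$ in the $\s^2$ coefficient is thus too optimistic --- the actual leak is constant order. Likewise, your plan to ``absorb'' the momentum residual $2\l\sum_s\h_s[f(x^s)-f(x^*)]$ into a left side carrying weights $\a/c_t$ requires $(1+2\l)\a>2\l\h_b$, which fails for $\b$ close to $1$ and so cannot give the theorem's unrestricted $\b\in[0,1)$.

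The fix is the paper's: divide by $\h_t$ before touching convexity, so that the only surviving $\h_t$-dependence is $\h_t\|\nabla f_{S_t}(x^t)\|^2\leq(1/c_t)[f_{S_t}(x^t)-\ell_{S_t}^*]$, whose expectation is exactly $(1/c_t)(\E[f(x^t)-f(x^*)]+\s^2)$, giving the clean $\s^2\sum_t 1/c_t\leq 2\s^2\sqrt{T}$ and hence $2\s^2/\sqrt{T}$ after division by $T$.
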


Notice that when there is no momentum, i.e., for $\b=0$, \Cref{thm:shb-dec-sps} recovers the original result of SGD with DecSPS from \citet{orvieto2022dynamics}. To our knowledge, the only other result that guarantees convergence to the exact solution for \ref{eq:shb} in the non-interpolated regime appears in \citet{sebbouh2021almost}. In particular, \citet{sebbouh2021almost} proves that for $\g_t=\frac{\h_t}{1+\l_{t+1}}$ and $\b_t=\frac{\l_t}{1+\l_t}$ where $\h_t=\frac{\h}{\sqrt{t+1}}$ with $0\leq\h\leq\frac{1}{4L_{\max}}$ and $\l_t=\frac{1}{2}\sum_{i=0}^{t-1}\sqrt{\frac{t+1}{i+1}}$ with $\l_0=0$, \ref{eq:shb} converges as $\E[f(x^t)-f(x^*)]\leq O(\log t/\sqrt{t})$. This result only holds for a specific choice of $\b_t$, it needs knowledge of $L_{\max}$, and only guarantees $O(\log t/\sqrt{t})$ rate. However, it does not make a bounded iteration assumption, and it shows convergence of the last iterate. In contrast, our convergence of \ref{eq:shb} with \ref{eq:modecsps} holds for any choice of $\b\in[0,1)$, the step-size is adaptive and does not depend on $L_{\max}$, and it achieves a rate of $O(1/\sqrt{k})$. 

Next, we present the theoretical guarantees for \ref{eq:moadasps}. 
\newpage
\begin{theorem}
    \label{thm:shb-ada-sps}
    Assume that each $f_i$ is convex and $L_i$-smooth. Then, the iterates of \ref{eq:shb} with \ref{eq:moadasps} and $\b\in[0,1)$ converge as 
    \begin{equation*}
        \E[f(\overline{x}^T)-f(x^*)]\leq\frac{\t^2}{T}+\frac{\t\s}{\sqrt{T}},
    \end{equation*}
    where $\overline{x}^T=\frac{1}{T}\sum_{t=0}^{T-1}x^t$, $\t=\frac{\b\sqrt{f(x^0)-f(x^*)}}{1-\b}+\frac{(1+\b)cLD^2}{2(1-\b)}+\frac{1}{2c}$ and $D^2=\max_{t\in[T]}\|x^t-x^*\|$. 
\end{theorem}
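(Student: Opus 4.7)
The plan is to reduce the analysis to IMA via \Cref{pro:ima-ss} and \Cref{cor:trans-app}: setting $\h_t = \g_t/(1-\b)$, the auxiliary IMA iterate $y^t := \frac{1}{1-\b}x^t - \frac{\b}{1-\b}x^{t-1}$ satisfies $y^{t+1} = y^t - \h_t \nabla f_{S_t}(x^t)$, and $\h_t$ is exactly an AdaSPS step-size. Expanding $\|y^{t+1} - x^*\|^2$, splitting $y^t - x^* = (y^t - x^t) + (x^t - x^*)$ with $y^t - x^t = \frac{\b}{1-\b}(x^t - x^{t-1})$, and applying convexity of $f_{S_t}$ at $x^t$ to the two resulting inner products gives the per-step inequality
\begin{equation*}
\|y^{t+1} - x^*\|^2 \leq \|y^t - x^*\|^2 - 2\h_t A_t - \frac{2\b\h_t}{1-\b}\bigl(f_{S_t}(x^t) - f_{S_t}(x^{t-1})\bigr) + \h_t^2 \|\nabla f_{S_t}(x^t)\|^2,
\end{equation*}
where $A_t := f_{S_t}(x^t) - f_{S_t}(x^*)$. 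The MomAdaSPS definition yields the pointwise AdaSPS-style estimate $\h_t^2 \|\nabla f_{S_t}(x^t)\|^2 \leq \h_t(A_t + B_t)/(c\sqrt{V_t})$ with $B_t := f_{S_t}(x^*) - \ell_{S_t}^*$ and $V_t := \sum_{s=0}^{t}(A_s + B_s)$, while $L_i$-smoothness gives the complementary lower bound $\h_t \geq 1/(2cL\sqrt{V_t})$.

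Summing over $t = 0, \dots, T-1$ telescopes the Euclidean term (using $y^0 = x^0$) and leaves three objects on the right-hand side: $\|x^0 - x^*\|^2$, the AdaSPS remainder controlled by the elementary $\sum_t (V_t - V_{t-1})/\sqrt{V_t} \leq 2\sqrt{V_T}$, and the momentum cross-term $\frac{2\b}{1-\b}\sum_t \h_t\bigl(f_{S_t}(x^t) - f_{S_t}(x^{t-1})\bigr)$. This last term is the main obstacle: because $\h_t$ and $f_{S_t}$ are both random and coupled through $S_t$, the naive telescoping available in the deterministic or constant step-size case fails. I plan to combine the monotonicity $\h_t \leq \h_{t-1}$ with the non-negativity $f_{S_t}(x^{t-1}) - \ell_{S_t}^* \geq 0$ in an Abel-style resummation; the tower property $\E_t[f_{S_t}(x^{t-1})] = f(x^{t-1})$ and the bounded-iterate hypothesis $D^2 = \max_t \|x^t - x^*\|^2$ then collapse the residual into a single contribution proportional to $f(x^0) - f(x^*)$ that appears multiplicatively with $\sqrt{V_T}$ (rather than additively as in \Cref{thm:shb-dec-sps}, which is why the eventual bound will carry a square root).

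To finish, take expectations and use $V_T \leq \sum_{s<T}(A_s + B_s)$ with $\E[B_s] = \s^2$ to obtain $\E[\sqrt{V_T}] \leq \sqrt{\sum_{s<T}\E[A_s] + T\s^2}$ by Jensen. The smoothness lower bound on $\h_t$ converts $\sum_t \E[\h_t A_t]$ on the LHS into a multiple of $\E[\sum_t A_t / \sqrt{V_T}]$, and convexity of $f$ yields $T\,\E[f(\overline{x}^T) - f(x^*)] \leq \sum_s \E[A_s]$. Putting these together produces a quadratic inequality of the form $X \leq \t\sqrt{X + T\s^2}$ with $X := T\,\E[f(\overline{x}^T) - f(x^*)]$, whose solution is exactly the advertised bound $\E[f(\overline{x}^T) - f(x^*)] \leq \t^2/T + \t\s/\sqrt{T}$. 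The three summands of $\t$ then have clear origins: the momentum residual contributes $\b\sqrt{f(x^0)-f(x^*)}/(1-\b)$ (square-rooted because it enters as a multiplier of $\sqrt{V_T}$ in the quadratic), the bounded-iterate term combined with smoothness contributes $(1+\b)cLD^2/(2(1-\b))$, and the AdaSPS regularization constant contributes $1/(2c)$.
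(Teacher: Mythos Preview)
Your plan has a genuine gap at exactly the point you flag as ``the main obstacle.'' Once you apply convexity of $f_{S_t}$ \emph{before} removing $\h_t$, the momentum cross-term $\sum_t \h_t\bigl(f_{S_t}(x^{t-1})-f_{S_t}(x^t)\bigr)$ cannot be controlled by Abel resummation plus the tower property: $\h_t$ depends on $S_t$ (through the AdaSPS formula), so $\E_{S_t}[\h_t f_{S_t}(x^{t-1})]\neq \E_{S_t}[\h_t]\,f(x^{t-1})$, and the partial sums $\sum_{s\le t}(f_{S_s}(x^s)-f_{S_s}(x^{s-1}))$ do not telescope because the index $S_s$ changes. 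The same coupling breaks your handling of the left-hand side: $A_t=f_{S_t}(x^t)-f_{S_t}(x^*)$ can be negative (since $x^*$ minimizes $f$, not $f_{S_t}$), so the pointwise lower bound $\h_t\ge 1/(2cL\sqrt{V_t})$ does not give $\h_t A_t\ge A_t/(2cL\sqrt{V_t})$. Finally, in your scheme the Euclidean term telescopes to $\|x^0-x^*\|^2$ with no $D^2$ appearing, so it is unclear how the constant $(1+\b)cLD^2/(2(1-\b))$ could arise.

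The paper sidesteps all of this with one move you are missing: divide the one-step inequality by $\h_t$ \emph{before} using convexity. This leaves the inner product $\langle\nabla f_{S_t}(x^t),z^t-x^*\rangle$ free of $\h_t$, so conditional expectation replaces $f_{S_t}$ by $f$; convexity of $f$ then gives the clean momentum term $\l\bigl(f(x^t)-f(x^{t-1})\bigr)$, which telescopes to the single additive residual $-2\l_1[f(x^0)-f(x^*)]$ (this is exactly \Cref{eq:thm:ima-dec-sps-important}, reused from the DecSPS proof). The cost is that the Euclidean remainder becomes $D_z^2/\h_{T-1}$, which is where bounded iterates enters and where the $cLD^2$ contribution to $\t$ comes from. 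The additive residual $2\l_1[f(x^0)-f(x^*)]$ is then absorbed into the quadratic by writing it as $2\l_1\sqrt{f(x^0)-f(x^*)}\cdot\sqrt{f(x^0)-f(x^*)}$ and bounding the second factor by $\sqrt{\sum_s\E[f(x^s)-f(x^*)]+T\s^2}$; this is the actual origin of the $\b\sqrt{f(x^0)-f(x^*)}/(1-\b)$ term in $\t$.
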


As in our previous theorems, when there is no momentum ($\b=0$), \Cref{thm:shb-ada-sps} recovers the convergence guarantees of AdaSPS from \citet{jiang2023adaptive}. Let us highlight that our convergence guarantees of \ref{eq:shb} with \ref{eq:moadasps}  offers the first \emph{robust} step-size selection for \ref{eq:shb}, in the sense that it can automatically adapt to the optimization setting. More specifically, for no interpolation, our result has a rate of $O(1/\sqrt{T})$, which matches the best-known rate for \ref{eq:shb} while if we assume interpolation ($\s^2=0$), then we are able to achieve a rate of $O(1/T)$ which matches the best-known rate for \ref{eq:shb} in the convex setting (see \citet{liu2020improved} and \Cref{thm:shb-sps-max}). Furthermore, under interpolation, \Cref{thm:shb-ada-sps} improves the convergence of \Cref{cor:shb-sps-max-inter}. In particular, when $\s^2=0$, \Cref{thm:shb-ada-sps} reaches the rate $O(1/T)$ for \emph{any} $\b\in[0,1)$, while in \Cref{cor:shb-sps-max-inter} this is possible only under tighter restrictions on $\b$.

\vspace{-2mm}
\section{Numerical Experiments}
\label{sec:experiments}
\vspace{-1mm}

In this section, we test our proposed algorithms in deterministic and stochastic convex problems as well as in training popular deep neural networks (DNNs). Our experiments are designed to highlight the benefits of our momentum variants over the vanilla (no-momentum) SPS. 

\begin{wrapfigure}{r}{0.6\textwidth}
\begin{minipage}{0.6\textwidth}
\begin{figure}[H]
	\centering
    \begin{subfigure}{0.45\columnwidth}
		\includegraphics[width=\textwidth]{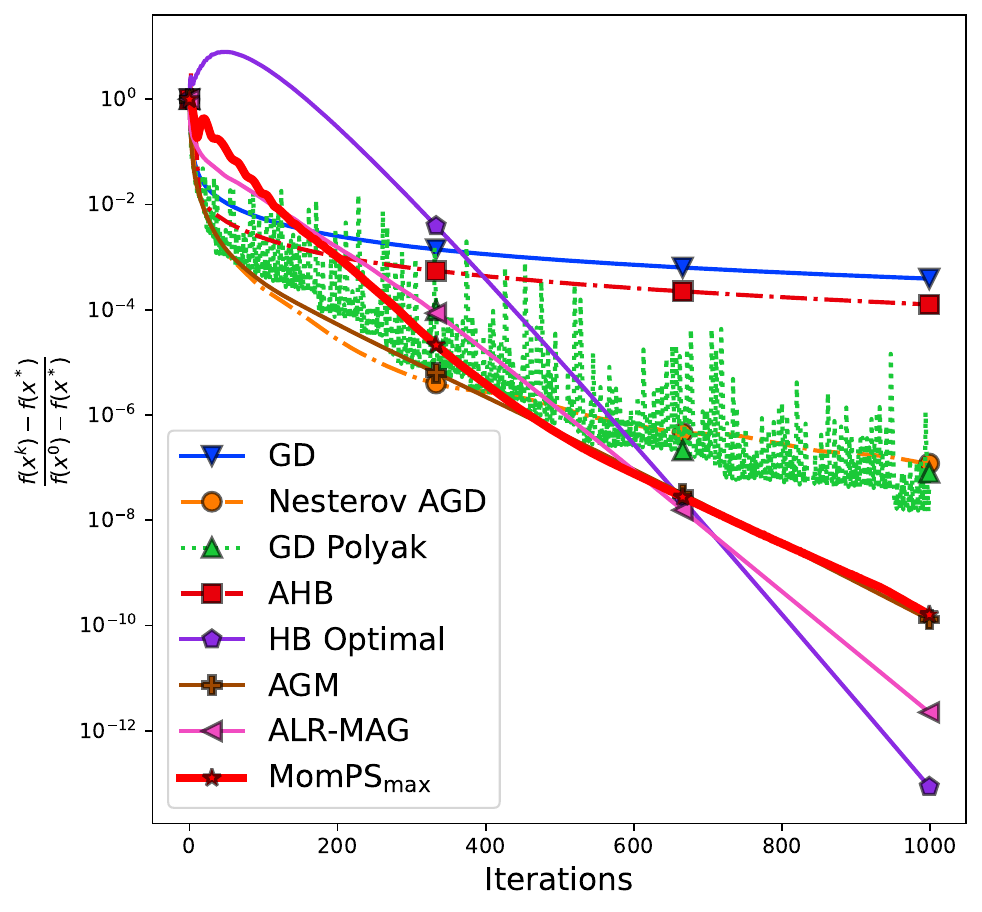}
	\end{subfigure}
    ~
	\begin{subfigure}{0.45\columnwidth}
		\includegraphics[width=\textwidth]{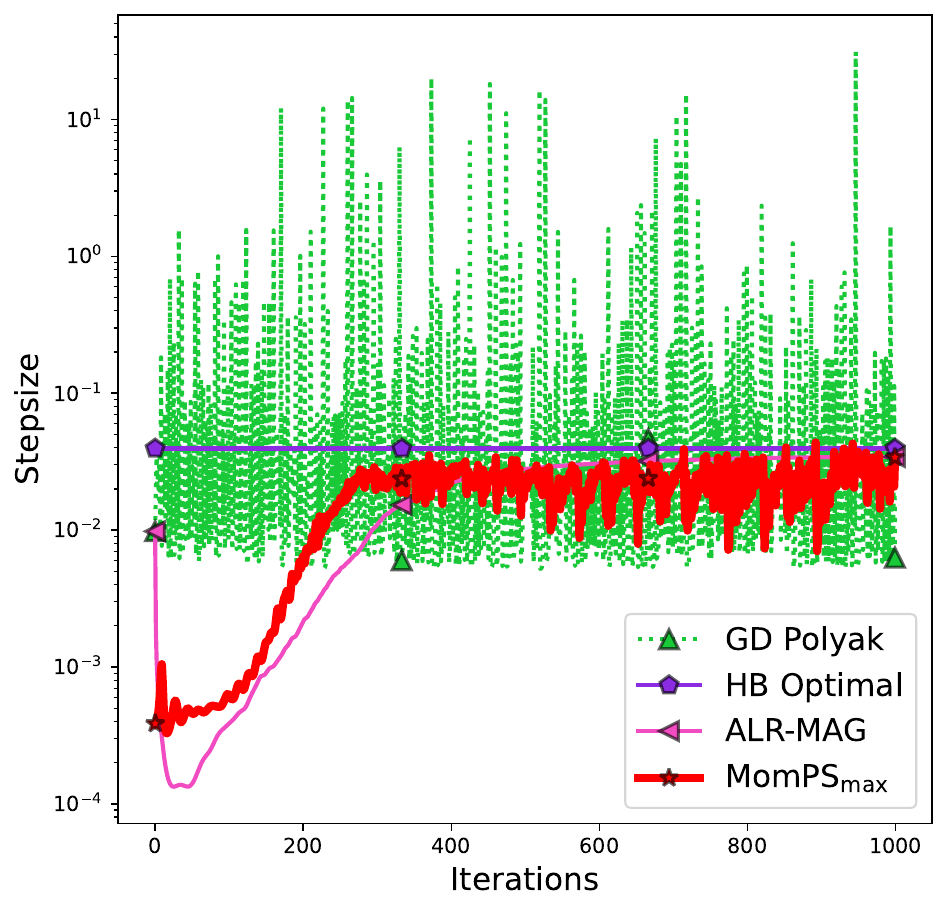}
	\end{subfigure}
	\caption{Comparison of deterministic algorithms for least squares. Left: Relative error, Right: Step-sizes.}
	\label{fig:det_ls}
\end{figure}
\end{minipage}
\end{wrapfigure}

\textbf{Deterministic Setting.}
For the deterministic setting, we focus on the least squares problem. The loss function in this case is given by $f(x)=\frac{1}{2}\|Ax-b\|^2$, where $A\in\R^{n\times d}$ and $b\in\R^n$. In our experiments, we follow the setting of \citet{wang2023generalized} and choose $n=d=1000$ while the matrix $A$ has been generated according to \citet{lenard1984randomly} such that the condition number of $A^TA$ is $10^4$. We test the following algorithms: Gradient Descent (GD), Nesterov's Accelerated Gradient Descent (AGD) \citep{nesterov2018lectures}, Gradient Descent with Polyak's step-size \citep{polyak1987introduction}, Adaptive Heavy Ball (AHB) \citep{saab2022adaptive}, Polyak's Heavy Ball (HB) \citep{polyak1964some}, Accelerated Gradient Method (AGM) \citep{barre2020complexity}, ALR-MAG \citep{wang2023generalized} and finally our step-size from \Cref{cor:hb-ps-max}, called (\ref{eq:mopsmax}), which guarantees $O(1/T)$ rate. For the least squares problem, there is an established theory from \citet{polyak1964some} that guarantees acceleration of HB with optimal step-size and momentum coefficient given by $\b^*=(\sqrt{L}-\sqrt{\m})^2/(\sqrt{L}+\sqrt{\m})^2$ and $\g^*=(1+\sqrt{\b^*})/L$. For ALR-MAG and \ref{eq:shb} with \ref{eq:mopsmax}, we have used the optimal momentum $\b^*$ for direct comparison with optimal HB. Moreover, for GD Polyak, ALR-MAG, and \ref{eq:mopsmax}, we have used the precomputed $f(x^*)$ (via GD), and for \ref{eq:mopsmax}, we have chosen $\g_b=100$ (for other choices, see \Cref{sec:upper-bound-need}). The results in \Cref{fig:det_ls} show that \ref{eq:mopsmax} has performance comparable to AGM and ALR-MAG which are faster than vanilla Polyak step-size after the initial iterations. Furthermore, to explore the behavior of the various adaptive step-sizes, we plot the Polyak, the ALR-SMAG, the \ref{eq:mopsmax}, and the optimal HB step-sizes. We can see that the original Polyak step-size oscillates around the optimal HB step-size while \ref{eq:mopsmax} converges from below to the optimal. 

\textbf{Stochastic Setting: Convex Problems.} Here, we compare our step-sizes with previous works in the stochastic setting. As noticed initially for SPS$_{\max}$~\citep{loizou2021stochastic}, the value of the upper bound $\g_b$ that results in good convergence depends on the problem and requires careful parameter tuning. To alleviate this problem, \citet{loizou2021stochastic} uses a smoothing procedure that prevents large fluctuations in the step-size across iterations. That is,  $\g_b^t=\t^{b/n}\g_{t-1}$ for each iteration $t$ where $\t=2$, $b$ is the batch-size, and $n$ is the number of examples. We use the same smoothing trick for $\g_b$ in the implementation of the proposed methods. Similar smoothing procedures have been used in \citet{tan2016barzilai,vaswani2019painless}. 

We considered multi-class logistic regression applied to commonly used benchmark datasets from the LIBSVM repository \citep{chang2011libsvm}. We separate our experiments between two classes of step-sizes: non-decreasing step-sizes (which might guarantee convergence to a neighborhood of the solution) and decreasing step-sizes (which guarantee convergence to the exact solution).  For the first class of step-sizes we test the following algorithms: SGD \citep{nesterov2018lectures}, \ref{eq:shb} with the constant step-size and $\b=0.9$, ADAM as described in \citet{kingma2014adam}, the warm-up version of ALR-SMAG with the hyper-parameters suggested in \citet{wang2023generalized}, 
the SPS$_{\max}$, the SPS$_{\max}$ with naive momentum as described in \Cref{sec:naive-sps} and our proposed \ref{eq:mospsmax}. For the Polyak-based step-sizes such as ALR-SMAG, SPS$_{\max}$, and \ref{eq:mospsmax}, we select $\ell_i^*=0$ and $c=1$. For SHB, ALR-SMAG, \ref{eq:mospsmax},  SPS$_{\max},$ with naive momentum,  we use momentum $\b=0.9$. 
All the convex experiments are run for 100 epochs and for 5 trials. We plot the average of the trials and the standard deviation. Since there are no standard train/test splits, and due to the small sizes of the datasets, we present training loss and accuracy curves only. We present the comparison of these methods in \Cref{fig:m_logreg_set_vowel_bs_52_e_100}. In all experiments, we observe that the performance of our proposed \ref{eq:mospsmax} significantly outperforms the other methods in both training loss and test accuracy. For the decreasing variants, we test AdaGrad-Norm \citep{duchi2011adaptive}, SGD with \ref{eq:decsps}, SGD with \ref{eq:adasps}, and our proposed \ref{eq:shb} with \ref{eq:modecsps} and \ref{eq:shb} with \ref{eq:moadasps}. We present the outcome of this comparison in \Cref{fig:dec_m_logreg_set_letter_bs_1500_e_100}. \ref{eq:modecsps} and \ref{eq:moadasps} outperform their no-momentum counterparts in both training loss and test accuracy. 

\vspace{-2mm}
\begin{minipage}{0.45\textwidth}
\begin{figure}[H]
	\centering
	\includegraphics[width=\textwidth]{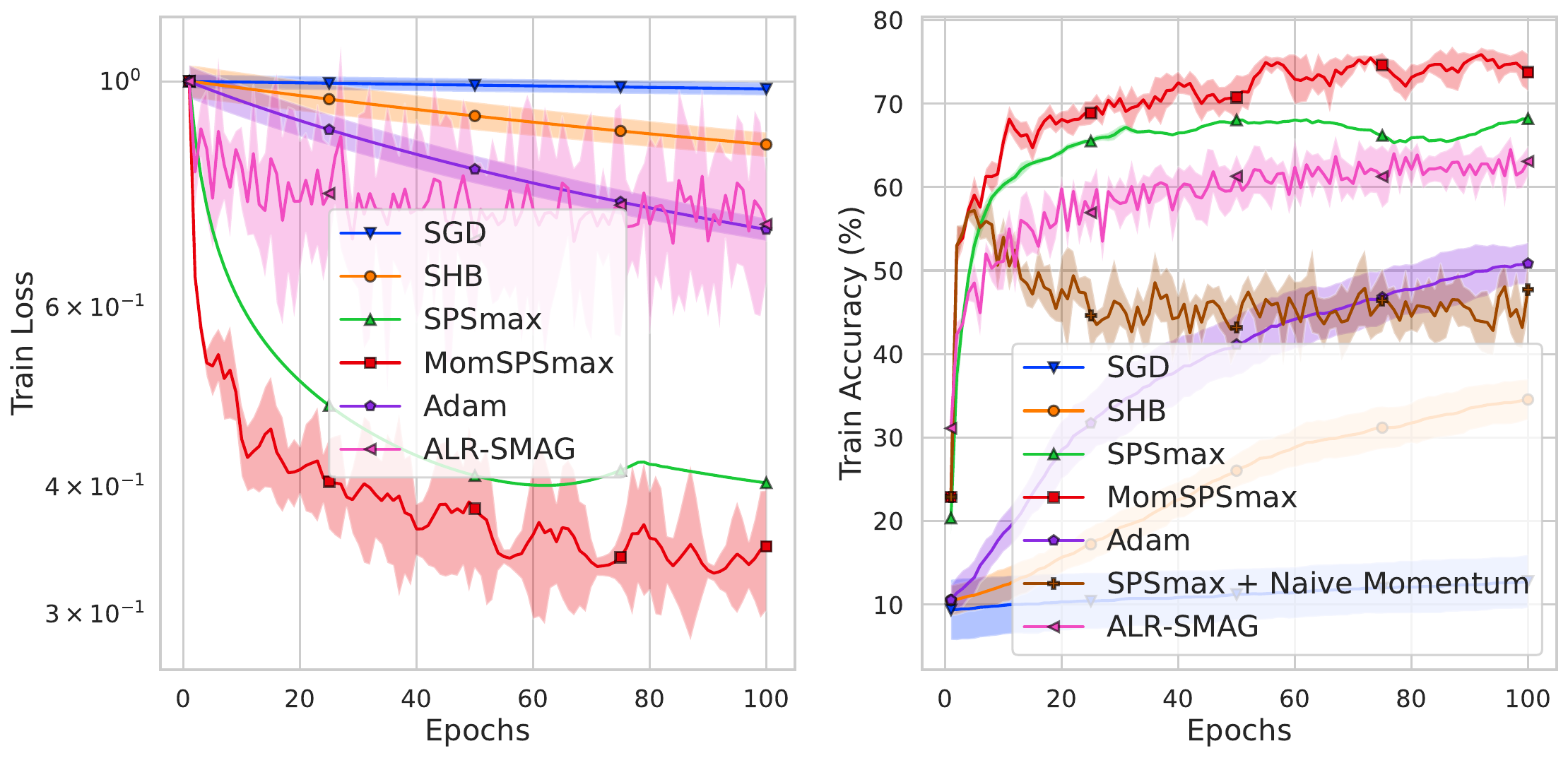}
	\caption{\small LibSVM dataset: vowel, Batch size: 52}
	\label{fig:m_logreg_set_vowel_bs_52_e_100}
\end{figure}
\end{minipage}
~
\begin{minipage}{0.45\textwidth}
\begin{figure}[H]
	\centering
	\includegraphics[width=\textwidth]{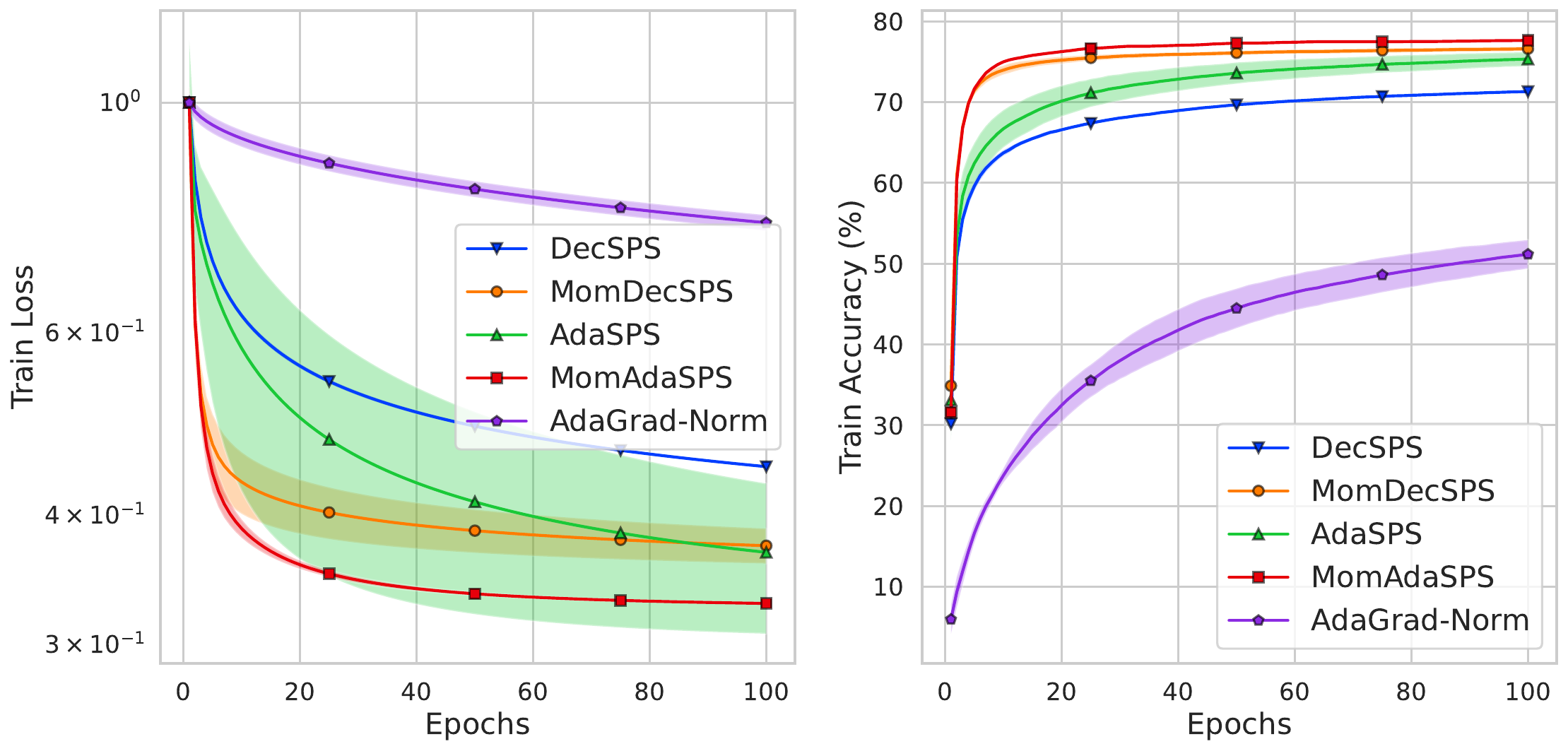}
	\caption{\small LibSVM dataset: letter, Batch size: 1500}
	\label{fig:dec_m_logreg_set_letter_bs_1500_e_100}
\end{figure}
\end{minipage}

\vspace{-2mm}
\textbf{MomSPS for DNNs.} In our final experiment, we go beyond the convex setting of our convergence guarantees, and we test \ref{eq:mospsmax} on the training of DNNs. We consider non-convex minimization for multi-class classification using deep network models on the CIFAR 10 and CIFAR 100 datasets \citep{krizhevsky2009learning}. We use two standard image-classification architectures: ResNet18 and ResNet34, \citep{he2016deep}. For space concerns, we report only the ResNet34 experiments in the main paper and relegate the ResNet18 to the Appendix. In these experiments we run \ref{eq:mospsmax} with $c=0.4$. We present the results for CIFAR 10 in \Cref{fig:resnet34_set_cifar10_bs_256_e_100} and for CIFAR 100 in \Cref{fig:hresnet34_set_cifar100_bs_256_e_100}. We observe that \ref{eq:mospsmax} consistently outperforms its no-momentum counterpart SPS$_{\max}$ and has competitive generalization performance compared to other popular optimizers. 

\vspace{-3mm}
\begin{minipage}{0.45\textwidth}
\begin{figure}[H]
	\centering
	\includegraphics[width=\textwidth]{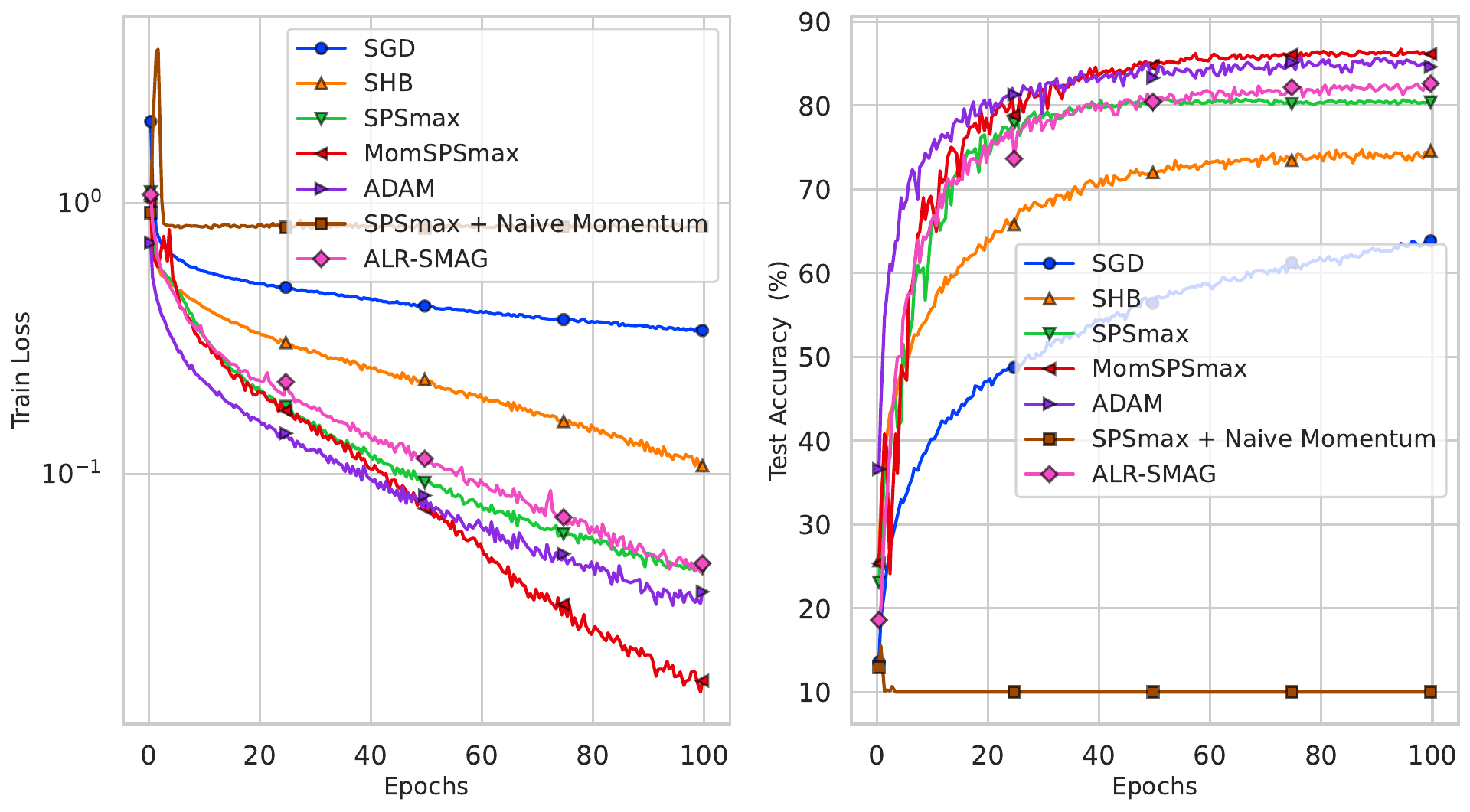}
    \caption{Resnet 34 on CIFAR 10}
    \label{fig:resnet34_set_cifar10_bs_256_e_100}
\end{figure}
\end{minipage}
~
\begin{minipage}{0.45\textwidth}
\begin{figure}[H]
	\centering
	\includegraphics[width=\textwidth]{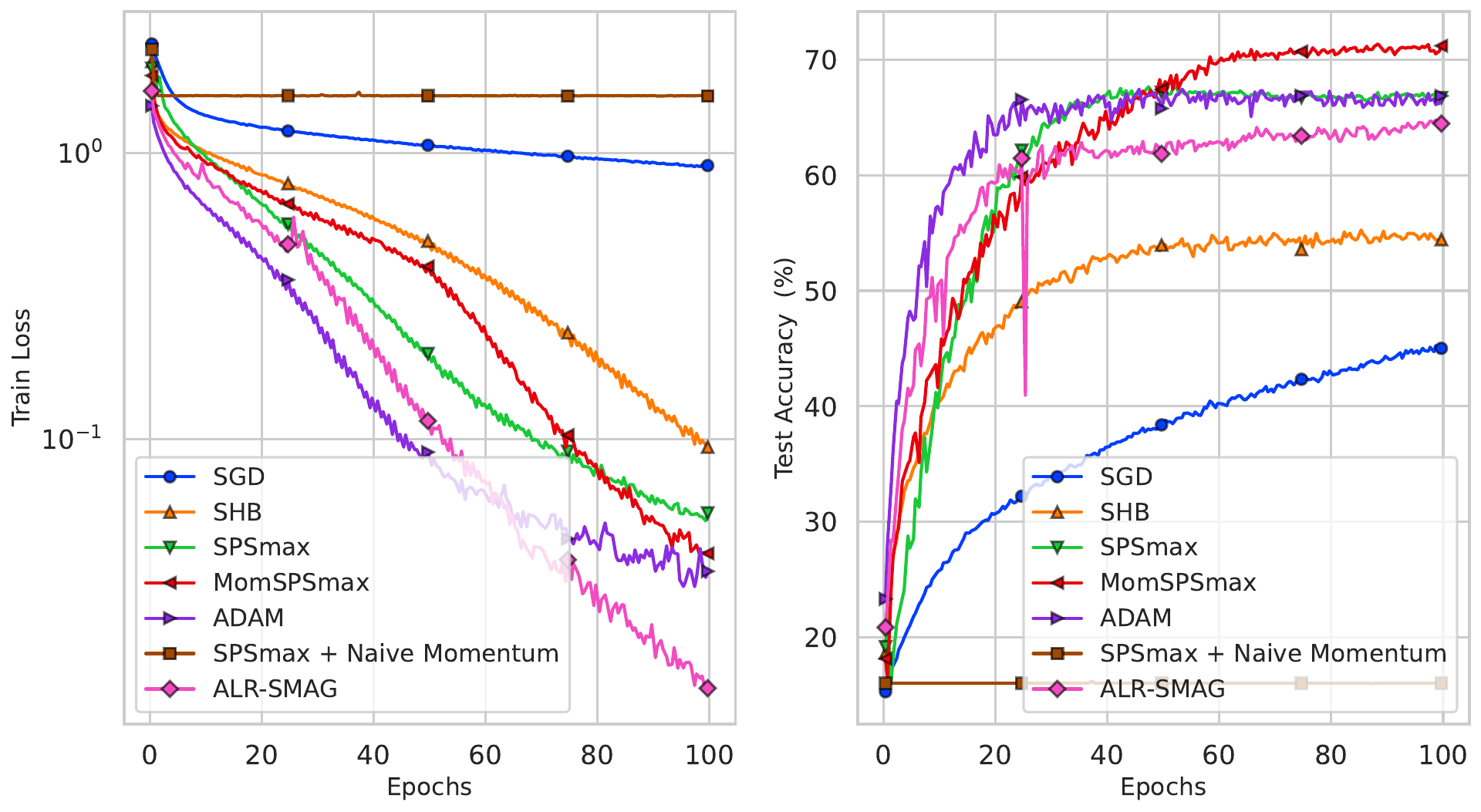}
    \caption{ResNet 34 on CIFAR 100}
	\label{fig:hresnet34_set_cifar100_bs_256_e_100}
\end{figure}
\end{minipage}

\newpage
\bibliographystyle{plainnat}
\bibliography{main}

\newpage
\appendix

\newpage
\part*{Supplementary Material}

The Supplementary Material is organized as follows: In \Cref{sec:furth-related-work}, we have more details on related work on adaptive and momentum methods. In \Cref{sec:pseudocode}, we include the main pseudo-codes for our algorithms. In \Cref{sec:lemmas}, we give the basic definitions and lemmas as well as the basic theory of IMA. \Cref{sec:proofs} presents the proofs of the theoretical guarantees from the main paper. In \Cref{sec:add-exps}, we describe in detail our experimental setup and provide additional experiments. 

{\small\tableofcontents}

\newpage
\section{Further Related Work}
\label{sec:furth-related-work}

\paragraph{Adaptive methods.}
Two of the first adaptive algorithms are AdaGrad \citep{duchi2011adaptive} and RMSProp \citep{hinton2012neural}. The very popular algorithm Adam \citep{kingma2014adam} was introduced as a momentum extension of RMSProp, and its more recent weight-decay variant AdamW \citep{loshchilov2017decoupled} is the de facto optimizer for deep neural networks. Investigating the convergence guarantees of adaptive methods across various settings continues to be an active area of research \citep{vaswani2020adaptive, ward2020adagrad,li2019convergence, shi2022ai, defossez2020simple,choudhury2024remove,defazio2022momentumized}. 

\medskip
\noindent
More recently, a new line of work for adaptive algorithms has appeared, inspired by Polyak step-sizes. Some attempts for efficiently generalizing the Polyak step-size from the deterministic setting to the stochastic were made in \citet{rolinek2018l4,oberman2019stochastic,berrada2020training}. \citet{loizou2021stochastic} was the first work proposing \ref{eq:spsmax} and providing strong convergence guarantees in different settings, including strongly convex, convex, and non-convex functions. Further extensions with decreasing variants were proposed in \citet{orvieto2022dynamics,jiang2023adaptive}. Many recent works propose SPS-type update rules for solving optimization problems in different settings. For example, extensions of SPS to proximal setting analyzed in \citet{schaipp2023stochastic}, SPS variants for mirror descent presented in \citet{d2023stochastic}, and SPS for federated learning proposed in \citet{mukherjee2023locally}. There are also strong connections between SPS and Model-Based approaches \citep{asi2019importance,asi2019stochastic,chadha2022accelerated}. For further results related to SPS, see \citet{gower2022cutting,li2022sp2,garrigos2023function} and \citet{abdukhakimov2023sania}.

\paragraph{Momentum with constant step-size.}
\citet{polyak1964some} introduced the concept of momentum for the Gradient Descent (GD) method. He showed that for strongly convex quadratic problems, the momentum method provably accelerates GD with optimal momentum coefficient $\b^*=(\sqrt{L}-\sqrt{\m})^2/(\sqrt{L}+\sqrt{\m})^2$ and $\g^*=(1+\sqrt{\b^*})^2/L$ where $\m$ is the strong convexity constant and $L$ the smoothness constant. \citet{ghadimi2015global} proved a global sublinear convergence guarantee for HB for convex and smooth functions and global linear convergence when the function is also smooth and strongly convex. Moreover, \citet{nesterov1983method} introduced Nesterov's Accelerated Gradient Descent (AGD) method, where he shows acceleration over GD for strongly convex and smooth as well as convex and smooth objectives. Note that both of these methods require knowledge of both $\m$ and $L$. In the stochastic setting, \citet{liu2020improved} provides new convergence guarantees for \ref{eq:shb} with constant step-size. In particular, it shows that for both strongly convex and non-convex objectives, \ref{eq:shb} enjoys the same convergence bound as SGD. \citet{sebbouh2021almost}, among other things, shows that in the smooth and convex setting \ref{eq:shb} converges in expectation at the last iterate with a rate of $O(1/T)$ to a neighborhood of the minimum and at a $O(\log T/\sqrt{T})$ rate to the minimum exactly. For these guarantees, one needs knowledge of the problem parameters (such as the smoothness constant), and their results hold for a specific selection of the momentum parameter. Momentum has also been successfully applied in linear systems, see \citet{loizou2020momentum} where it provides convergence guarantees of SGD, stochastic Newton, stochastic proximal point, and stochastic dual subspace ascent with momentum for consistent linear systems. 

\paragraph{Adaptive Momentum.}
In this paragraph, we review the bibliography for adaptive methods with momentum. Momentum can be either SHB or Nesterov's acceleration, and in this case, the adaptivity can be either the step-size or the momentum coefficient. 
In the deterministic setting, \citet{barre2020complexity} proposed the Accelerated Gradient Method (AGM), an adaptive algorithm built upon AGD that guarantees acceleration over GD. AGM approximates the strong convexity constant $\m$ by the inverse of the classical Polyak Step-size. However, their algorithm still requires the knowledge of $L$ (but not $\m$). Furthermore, \citet{saab2022adaptive} suggests an Adaptive Heavy Ball (AHB) algorithm where it approximates the constants $\m$ and $L$ iteratively in each step based on the formula of the optimal constants of HB, but it does not show any acceleration over HB or GD. In the stochastic setting, \citet{wang2023generalized} propose an adaptive algorithm for a moving averaged gradient momentum, named ALR-SMAG. It shows that ALR-SMAG enjoys a linear convergence rate for semi-strongly convex and smooth functions. \citet{schaipp2023momo} proposes a new adaptive learning rate that can be combined with any momentum-based method. Finally, \citet{zeng2023adaptive} proposes adaptive variants for \ref{eq:shb} in linear systems solvers. 

\paragraph{On Technical Assumptions for Convergence.}
In the literature of stochastic optimization problems, there are several assumptions on the noise of the stochastic estimators that typically are made on top of smoothness and convexity to prove the convergence of stochastic optimization algorithms. 

\medskip
\noindent
For example, many works \citep{recht2011hogwild,rakhlin2012making,shamir2013stochastic,nguyen2018sgd} assume bounded gradients, i.e., that is, there exists a $M\in \R$ such that $\E\|\nabla f_i(x)\|^2\leq M$. While this might look like a natural assumption, in the unconstrained setting, it contradicts the assumption of strong convexity leading to convergence guarantees that hold for an empty set of problems \citep{nguyen2018sgd,gower2019sgd,gower2021sgd}. A much more relaxed assumption used in the literature is the growth condition on the stochastic gradients, \citep{bertsekas1996neuro,schmidt2013fast,vaswani2019fast}. It states that there exist constants $\r\in\R$ and $\d\in\R$ such that $\E\|\nabla f_i(x)\|^2\leq\r\|\nabla f(x)\|^2+\d$. Based on a strong growth condition $(\d=0)$, \citet{schmidt2013fast} were the first to establish linear convergence of SGD, with \citet{vaswani2019fast} later showing that SGD can find a first-order stationary point as efficiently as full gradient descent in non-convex settings. Similar conditions have also been proved and used in the analysis of decentralized variants of SGD \citep{koloskova2020unified,assran2019stochastic}. More recently, a line of work that uses smoothness (via expected smoothness/residual conditions) to provide closed-form expressions for the values of $\r$ and $\d$ of growth condition was able to provide tight convergence analysis of several stochastic algorithms, including SGD \citep{gower2019sgd,gower2021sgd}, variance reduced methods \citep{khaled2023unified}, stochastic algorithms for min-max optimization \citep{loizou2021stochastic,loizou2020stochastic,gorbunov2022stochastic,choudhury2024single}.

\medskip
\noindent
In the convex regime, the original analysis of SGD with SPS of \citet{loizou2021stochastic} was one of the first papers that did not require any additional assumptions to guarantee convergence for SGD. Following the convergence guarantees of \citet{loizou2021stochastic},  we highlight that our proposed analysis of \ref{eq:shb} with Polyak step-size does not require any additional assumptions for guaranteeing convergence. To the best of our knowledge, as mentioned in the main paper, our approach provides the first analysis of \ref{eq:shb} without the restrictive bounded variance and growth conditions. 

\medskip
\noindent
For proving convergence of \ref{eq:shb}, \citet{liu2020improved} makes the strong assumption of bounded variance and assumes knowledge of the smoothness $L$ for tuning the parameters. Another work on theoretical guarantees for \ref{eq:shb} is \citet{sebbouh2021almost}, where a last iterate convergence is proved for a specific choice of $\g_t$ and $\b_t$, but also requires knowledge of $L$ and only achieves rate $O(\log T/\sqrt{T})$ for the decreasing step-size variant. Furthermore, the authors of \citet{wang2022provable} propose a Polyak-based stepsize for a moving averaged gradient momentum variant, different from \ref{eq:shb}, and provide guarantees for the deterministic and stochastic setting for strongly convex objectives. Finally, \citet{schaipp2023momo} assumes a stochastic convex and \emph{interpolated} regime for their results.

\newpage
\section{Pseudo-codes}
\label{sec:pseudocode}

In this section, we include the pseudo-codes of the \ref{eq:shb} with the proposed stochastic Polyak step-sizes for a better understanding of the methods and easier comparison with other algorithms. 

\paragraph{SHB with MomSPSmax.} We start with the pseudo-code for \ref{eq:shb} with \ref{eq:mospsmax}. In \Cref{thm:shb-sps-max} we established convergence to a neighborhood for $\b\in\left[0,\frac{\a}{2\g_b-\a}\right)$ where $\a=\min\left\{\frac{1}{2L_{\max}},\g_b\right\}$ and for $c=1$. However, in practice, it seems to have convergence even for different choices of $c$ or even if $\b$ is chosen outside of this bound, so we provide the pseudo-code for a general $\b\in[0,1)$ and $c$. Thus, the user must provide the momentum coefficient $\b$, the upper bound $\g_b$, the constant $c$, and the lower bounds $\ell_{S_t}^*$. In practice, the usual choices are $\b=0.9$, $c=1$, and $\ell_{S_t}^*=0$. 

\begin{algorithm}[H]
    \caption{\ref{eq:shb} with \ref{eq:mospsmax}}
    \begin{algorithmic}[1]
        \State \textbf{Parameters:} $\b\in[0,1)$, $\g_b>0$, $c>0$, $\ell_{S_t}^*$ lower bounds
        \State \textbf{Initialization:} $x^0\in\R^d$, $x^{-1}=x^0$
        \For{$t=0,1,2,\dots$}
            \State Choose uniformly at random $S_t\subseteq\{1,\dots,n\}$
            \State $\g_t=(1-\b)\min\left\{\frac{f_{S_t}(x^t)-\ell_{S_t}^*}{c\|\nabla f_{S_t}(x^t)\|^2},\g_b\right\}$
            \State $x^{t+1}=x^t-\g_t\nabla f_{S_t}(x^t)+\b(x^t-x^{t-1})$
        \EndFor
    \end{algorithmic}
    \label{alg:mospsmax}
\end{algorithm}

\paragraph{SHB with MomDecSPS.}
For the decreasing step-sizes, \Cref{thm:shb-dec-sps,thm:shb-ada-sps} make no assumption on $\b\in[0,1)$. For \ref{eq:shb} with \ref{eq:modecsps}, the user needs to provide the momentum coefficient $\b$, the upper bound $\g_b$, the constant $c$ and the lower bounds $\ell_{S_t}$, like the previous algorithm. In practice, the usual choices are $c=1$ and $\ell_{S_t}^*=0$. Note that when $t=0$, the step-size is equal to 
\begin{align*}
    \g_0=\min\left\{\frac{(1-\b)[f_{S_0}(x^0)-\ell_{S_0}^*]}{c\|\nabla f_{S_0}(x^0)\|^2},\frac{\g_bc}{c}\right\}=(1-\b)\min\left\{\frac{f_{S_0}(x^0)-\ell_{S_0}^*}{c\|\nabla f_{S_0}(x^0)\|^2},\g_b\right\},
\end{align*}
which is equal to \ref{eq:mospsmax} for $t=0$. 

\begin{algorithm}[H]
    \caption{\ref{eq:shb} with \ref{eq:modecsps}}
    \begin{algorithmic}[1]
        \State \textbf{Parameters:} $\b\in[0,1)$, $\g_{-1}=\g_b>0$, $c>0$, $c_{-1}=c>0$, $c_t=c\sqrt{t+1}$ for $t\geq0$, $\ell_{S_t}^*$ lower bounds
        \State \textbf{Initialization:} $x^0\in\R^d$, $x^{-1}=x^0$
        \For{$t=0,1,2,\dots$}
            \State Choose uniformly at random $S_t\subseteq\{1,\dots,n\}$
            \State $\g_t=\min\left\{\frac{(1-\b)[f_{S_t}(x^t)-\ell_{S_t}^*]}{c_t\|\nabla f_{S_t}(x^t)\|^2},\frac{\g_{t-1}c_{t-1}}{c_t}\right\}$
            \State $x^{t+1}=x^t-\g_t\nabla f_{S_t}(x^t)+\b(x^t-x^{t-1})$
        \EndFor
    \end{algorithmic}
    \label{alg:modecsps}
\end{algorithm}

\paragraph{SHB with MomAdaSPS.}
Finally, for \ref{eq:shb} with \ref{eq:moadasps} the requirements are less since the user only needs to provide the momentum coefficient $\b$, the constant $c$ and the lower bounds $\ell_{S_t}^*$. Furthermore, according to \citet{jiang2023adaptive} one can set $c=\frac{1}{\sqrt{f_{S_0}(x^0)-\ell_{S_0}^*}}$ after randomly choosing $S_0\subseteq[n]$ in the first iteration. Moreover, the minimum with respect to $\g_{t-1}$ is to ensure that the step-size is decreasing. 

\begin{algorithm}[H]
    \caption{\ref{eq:shb} with \ref{eq:moadasps}}
    \begin{algorithmic}[1]
        \State \textbf{Parameters:} $\b\in[0,1)$, $\g_{-1}=+\infty$, $c>0$, $\ell_{S_t}^*$ lower bounds
        \State \textbf{Initialization:} $x^0\in\R^d$, $x^{-1}=x^0$
        \For{$t=0,1,2,\dots$}
            \State Choose uniformly at random $S_t\subseteq\{1,\dots,n\}$
            \State $\g_t=\min\left\{\frac{(1-\b)[f_{S_t}(x^t)-\ell_{S_t}^*]}{c\|\nabla f_{S_t}(x^t)\|^2\sqrt{\sum_{s=0}^tf_{S_s}(x^s)-\ell_{S_s}^*}},\g_{t-1}\right\}$
            \State $x^{t+1}=x^t-\g_t\nabla f_{S_t}(x^t)+\b(x^t-x^{t-1})$
        \EndFor
    \end{algorithmic}
    \label{alg:moadasps}
\end{algorithm}

\medskip
\noindent
Note that for all the above algorithms, one needs only lower bounds $\ell_{S_t}^*$, which can be chosen equal to $\ell_{S_t}^*=0$ in practice. Of course, if the true infima $f_{S_t}^*=\inf_{x\in\R^d}f_{S_t}$ are known, then one should use them because it will lead to better convergence in terms of the neighborhood. Furthermore, notice that we start by initializing $x^{-1}=x^0$, where $x^0$ is the given starting point. This is because the first step of \ref{eq:shb} (i.e. for $t=0$) is just an iteration of SGD because we have no previous iterate $x^{-1}$, that is $x^1=x^0-\g_0\nabla f_{S_0}(x^0)+\b(x^0-x^{-1})=x^0-\g_0\nabla f_{S_0}(x^0)$.

\newpage
\section{Technical Preliminaries}
\label{sec:lemmas}

\subsection{Basic Definitions}

In this section, we present some basic definitions we use throughout the paper. 

\begin{definition}[Convexity]
    A differentiable function $f:\R^d\to\R$ is convex if 
    \begin{align*}
        f(x)\geq f(y)+\langle\nabla f(y),x-y\rangle,
    \end{align*}
    for all $x,y\in\R^d$. 
\end{definition}

\begin{definition}[$L$-smooth]
    A differentiable function $f:\R^d\to\R$ is $L$-smooth if there exists a constant $L>0$ such that 
    \begin{align*}
        \|\nabla f(x)-\nabla f(y)\|\leq L\|x-y\|,
    \end{align*}
    or equivalently 
    \begin{align*}
        |f(x)-f(y)-\langle\nabla f(y),x-y\rangle|\leq\frac{L}{2}\|x-y\|^2
    \end{align*}
    for all $x,y\in\R^d$. 
\end{definition}

\subsection{Basic Lemmas}

Here we have gathered useful lemmas for various Polyak related step-sizes. These lemmas are used repeatedly in the proofs of the main results. 

\begin{lemma}[{\citep{loizou2021stochastic}}]
    \label{lem:sps-bound}
    Suppose each $f_i$ is $L_i$-smooth, then the step-size of SPS$_{\max}$ satisfies: 
    \begin{align}
        \label{eq:lem:sps-bound}
        \a=\min\left\{\frac{1}{2cL_{\max}},\g_b\right\}\leq\g_t\leq\g_b\text{ and }\g_t^2\|\nabla f_{S_t}(x^t)\|^2\leq\g_t[f_{S_t}(x^t)-\ell_{S_t}^*],
    \end{align}
    where $L_{\max}=\max_{i\in[n]}\{L_i\}$. 
\end{lemma}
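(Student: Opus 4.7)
The plan is to verify the three component claims in turn, each following from the definition of $\gamma_t$ combined with one standard consequence of $L$-smoothness.

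The upper bound $\gamma_t \leq \gamma_b$ is immediate from the definition of $\gamma_t$ as a minimum of two quantities, one of which is $\gamma_b$.

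For the lower bound, I would invoke the standard fact that any $L$-smooth function $g$ bounded below by $g^*$ satisfies $g(x) - g^* \geq \tfrac{1}{2L}\|\nabla g(x)\|^2$, proved by applying the descent lemma at the trial point $y = x - \nabla g(x)/L$ and using $g(y) \geq g^*$. Since $f_{S_t} = \tfrac{1}{B}\sum_{i\in S_t} f_i$ is an average of $L_i$-smooth functions with $L_i \leq L_{\max}$, it is itself $L_{\max}$-smooth; and since $\ell_{S_t}^*$ is the corresponding average of the pointwise lower bounds $\ell_i^*$, averaging $f_i \geq \ell_i^*$ gives $\ell_{S_t}^* \leq \inf_x f_{S_t}(x)$. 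Combining,
\[
f_{S_t}(x^t) - \ell_{S_t}^* \;\geq\; \tfrac{1}{2L_{\max}}\|\nabla f_{S_t}(x^t)\|^2,
\]
so $\tfrac{f_{S_t}(x^t)-\ell_{S_t}^*}{c\|\nabla f_{S_t}(x^t)\|^2} \geq \tfrac{1}{2cL_{\max}}$, and taking the minimum with $\gamma_b$ yields $\gamma_t \geq \alpha$.

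For the quadratic bound in \eqref{eq:lem:sps-bound}, the minimum defining $\gamma_t$ gives $\gamma_t \leq \tfrac{f_{S_t}(x^t)-\ell_{S_t}^*}{c\|\nabla f_{S_t}(x^t)\|^2}$, and multiplying both sides by the nonnegative quantity $c\gamma_t\|\nabla f_{S_t}(x^t)\|^2$ yields $c\gamma_t^2\|\nabla f_{S_t}(x^t)\|^2 \leq \gamma_t[f_{S_t}(x^t)-\ell_{S_t}^*]$, which matches the stated bound in the $c=1$ regime used throughout the paper. The only care-point is the degenerate case $\nabla f_{S_t}(x^t) = 0$, where the step-size defaults to $\gamma_b$ and all three inequalities hold trivially. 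I do not anticipate any substantive obstacle: the lemma is essentially a direct consequence of the defining minimum combined with a single textbook smoothness inequality, and no deeper machinery is required.
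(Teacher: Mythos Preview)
Your proof is correct and is exactly the standard argument from \citet{loizou2021stochastic}; the present paper does not supply its own proof of this lemma but simply cites it. Your observation about the factor $c$ in the second inequality is apt: the derivation actually yields $\gamma_t^2\|\nabla f_{S_t}(x^t)\|^2\leq\tfrac{\gamma_t}{c}[f_{S_t}(x^t)-\ell_{S_t}^*]$, which reduces to the stated form precisely when $c=1$, the value used in all the paper's proofs that invoke this lemma.
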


\begin{lemma}[Lemma 1, \citep{orvieto2022dynamics}]
    \label{lem:decsps-bound}
    Suppose each $f_i$ is $L_i$-smooth, then the step-size of DecSPS satisfies 
    \begin{align}
        \label{eq:lem:decsps-bound}
        \min\left\{\frac{1}{2c_tL_{\max}},\frac{c_0\g_b}{c_t}\right\}\leq\g_t\leq\frac{c_0\g_b}{c_t}\text{ and }\g_{t-1}\leq\g_t\text{ and }\g_t^2\|\nabla f_{S_t}(x^t)\|^2\leq\frac{\g_t}{c_t}[f_{S_t}(x^t)-\ell_{S_t}^*],
    \end{align}
    where $L_{\max}=\max_{i\in[n]}\{L_i\}$. 
\end{lemma}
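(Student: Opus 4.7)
The plan is to prove the three claims in the lemma by unfolding the recursive definition of \ref{eq:decsps}, inducting on $t$, and pairing with the standard $L_{\max}$-smoothness consequence $\|\nabla f_{S_t}(x^t)\|^2\leq 2L_{\max}[f_{S_t}(x^t)-\ell_{S_t}^*]$, which follows from each $f_i$ being $L_i$-smooth and lower bounded by $\ell_i^*$.

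For the upper bound $\g_t\leq c_0\g_b/c_t$, I would induct on $t$. The base case uses the initialization $\g_{-1}=\g_b$, $c_{-1}=c_0$, giving $c_0\g_0\leq c_{-1}\g_{-1}=c_0\g_b$. For the inductive step, the min in \ref{eq:decsps} yields $c_t\g_t\leq c_{t-1}\g_{t-1}$, which combined with the hypothesis $c_{t-1}\g_{t-1}\leq c_0\g_b$ closes the induction. For the lower bound, I would split on which argument of the min is active: when the first is active, $\g_t=[f_{S_t}(x^t)-\ell_{S_t}^*]/(c_t\|\nabla f_{S_t}(x^t)\|^2)$ and the smoothness bound gives $\g_t\geq 1/(2c_tL_{\max})$; when the second is active, $c_t\g_t=c_{t-1}\g_{t-1}$, and rescaling the inductive hypothesis $\g_{t-1}\geq\min\{1/(2c_{t-1}L_{\max}),c_0\g_b/c_{t-1}\}$ by the factor $c_{t-1}/c_t$ produces exactly $\min\{1/(2c_tL_{\max}),c_0\g_b/c_t\}$, since both arguments of the min depend on $c_{t-1}$ only through a $1/c_{t-1}$ factor so the rescaling commutes with the min.

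For the monotonicity claim between $\g_{t-1}$ and $\g_t$, the relation $c_t\g_t\leq c_{t-1}\g_{t-1}$ drops out immediately from the min in \ref{eq:decsps}; this is the scaled-step monotonicity of Orvieto--Khaled--Loizou, and since $c_t\geq c_{t-1}$ it gives $\g_t\leq (c_{t-1}/c_t)\g_{t-1}\leq\g_{t-1}$. I would present the proof in this scaled form $c_t\g_t\leq c_{t-1}\g_{t-1}$, which is the content the downstream convergence proofs invoke. Finally, the gradient inequality $\g_t^2\|\nabla f_{S_t}(x^t)\|^2\leq (\g_t/c_t)[f_{S_t}(x^t)-\ell_{S_t}^*]$ follows from $c_t\g_t\leq [f_{S_t}(x^t)-\ell_{S_t}^*]/\|\nabla f_{S_t}(x^t)\|^2$ (direct when the first argument of the min is attained, and inherited from the smoothness chain when the second is attained), upon multiplying both sides by $\g_t\|\nabla f_{S_t}(x^t)\|^2/c_t$.

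The main subtlety is the lower-bound induction: the rescaling factor $c_{t-1}/c_t$ must simultaneously map $1/(2c_{t-1}L_{\max})\mapsto 1/(2c_tL_{\max})$ and $c_0\g_b/c_{t-1}\mapsto c_0\g_b/c_t$, and one must justify that the min commutes with this rescaling. Once this alignment is verified, the upper bound, monotonicity, and gradient inequality all reduce to direct algebraic consequences of the defining min and the smoothness bound.
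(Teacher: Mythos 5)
The paper does not actually prove this lemma --- it is imported verbatim (as ``Lemma 1'') from \citet{orvieto2022dynamics} with only a citation --- so there is no in-paper argument to compare against. Your proof is correct and is essentially the standard one: the upper bound by induction via $c_t\g_t\leq c_{t-1}\g_{t-1}\leq\dots\leq c_0\g_b$, the lower bound by a case split on which argument of the min is active together with the co-coercivity-type inequality $f_{S_t}(x^t)-\ell_{S_t}^*\geq\frac{1}{2L_{\max}}\|\nabla f_{S_t}(x^t)\|^2$, and your observation that multiplying by the positive factor $c_{t-1}/c_t$ commutes with the min is exactly the step that makes the lower-bound induction close. Two small remarks. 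First, the monotonicity inequality as printed in the lemma, $\g_{t-1}\leq\g_t$, is a typo (it would assert the step-size is \emph{increasing}, contradicting the main text's claim that \ref{eq:decsps} is decreasing and the downstream use of $1/\h_{t+1}-1/\h_t\geq0$ in the proof of \Cref{thm:ima-dec-sps}); you prove the correct direction $\g_t\leq(c_{t-1}/c_t)\g_{t-1}\leq\g_{t-1}$, which is what the convergence proofs actually need. Second, your parenthetical that the bound $c_t\g_t\leq[f_{S_t}(x^t)-\ell_{S_t}^*]/\|\nabla f_{S_t}(x^t)\|^2$ is ``inherited from the smoothness chain'' when the second argument of the min is attained is unnecessary: whichever argument is active, the min is by definition at most its first argument, so this inequality holds unconditionally and without smoothness. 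Neither point affects the validity of the argument.
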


\begin{lemma}[Lemma 12, {\citep{jiang2023adaptive}}]
    \label{lem:adasps-trick}
    For any non-negative sequence $a_0,\dots,a_T$ , the following holds: 
    \begin{align}
        \label{eq:lem:adasps-trick-1}
        \sqrt{\sum_{t=0}^Ta_t}\leq\sum_{t=0}^T\frac{a_t}{\sqrt{\sum_{s=0}^ta_s}}
        \leq2\sqrt{\sum_{t=0}^Ta_t}.
    \end{align}
    If $a_0\geq1$, then the following holds: 
    \begin{align}
        \label{eq:lem:adasps-trick-2}
        \sum_{t=0}^T\frac{a_t}{\sqrt{\sum_{s=0}^ta_s}}\leq\log\left(\sum_{t=0}^T
        a_t\right)+1.
    \end{align}
\end{lemma}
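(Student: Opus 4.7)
The plan is to reduce all three bounds to standard telescoping / summation-by-parts arguments applied to the partial-sum sequence $S_t := \sum_{s=0}^t a_s$ (with the convention $S_{-1} := 0$), so that $a_t = S_t - S_{t-1}$ for every $t \geq 0$. The lower bound in \eqref{eq:lem:adasps-trick-1} requires no real work: since $S_t \leq S_T$ for every $t \leq T$, monotonicity gives $1/\sqrt{S_t} \geq 1/\sqrt{S_T}$, hence $\sum_{t=0}^T a_t/\sqrt{S_t} \geq (1/\sqrt{S_T})\sum_{t=0}^T a_t = \sqrt{S_T}$.

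For the upper bound in \eqref{eq:lem:adasps-trick-1}, the key ingredient is the elementary algebraic inequality
\[
\frac{b-c}{\sqrt{b}} \;\leq\; 2\bigl(\sqrt{b}-\sqrt{c}\bigr), \qquad 0 \leq c \leq b,
\]
which follows at once from the factorization $b - c = (\sqrt{b}-\sqrt{c})(\sqrt{b}+\sqrt{c}) \leq 2\sqrt{b}(\sqrt{b}-\sqrt{c})$. Applying this with $b = S_t$ and $c = S_{t-1}$ yields $a_t/\sqrt{S_t} \leq 2(\sqrt{S_t}-\sqrt{S_{t-1}})$, and summing over $t = 0,\dots,T$ telescopes to $2(\sqrt{S_T} - \sqrt{S_{-1}}) = 2\sqrt{S_T}$.

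For \eqref{eq:lem:adasps-trick-2}, the plan is to run the same telescoping pattern with the \emph{logarithmic} analogue of the above inequality: namely, $(b-c)/b \leq \log(b/c)$ for $0 < c \leq b$, which is a rewriting of $\log(1+x) \geq x/(1+x)$ with $x = (b-c)/c$. Under the assumption $a_0 \geq 1$ we have $S_t \geq S_0 \geq 1$ for every $t \geq 0$, so applying this with $b = S_t$, $c = S_{t-1}$ for $t \geq 1$ and telescoping gives a bound of the form $\log(S_T/S_0) \leq \log(S_T)$ on the tail, to which the $t = 0$ contribution $a_0/S_0 = 1$ is added, yielding the claimed $\log(S_T) + 1$. (Strictly speaking this telescoping naturally controls $\sum_t a_t/S_t$ rather than $\sum_t a_t/\sqrt{S_t}$, and I would follow the conventions of the cited source when transcribing the final sum.)

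No substantial obstacle is expected. The only mildly delicate bookkeeping is at the initial index $t = 0$, where $S_{-1} = 0$: in the square-root telescoping it is absorbed trivially because $2(\sqrt{S_0}-\sqrt{S_{-1}}) = 2\sqrt{a_0}$ dominates $a_0/\sqrt{S_0} = \sqrt{a_0}$, while in the logarithmic case the $t=0$ term is split off by hand and bounded by $1$ using $a_0 \geq 1$ so that $a_0/S_0 = 1$. Everything else reduces to verifying the two one-line algebraic inequalities above.
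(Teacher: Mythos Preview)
The paper does not give its own proof of this lemma; it is quoted verbatim as Lemma~12 of \citet{jiang2023adaptive} and only the first inequality \eqref{eq:lem:adasps-trick-1} is ever invoked (in the proof of \Cref{thm:ima-ada-sps}). So there is nothing to compare your argument against here except the statement itself.

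Your proof of \eqref{eq:lem:adasps-trick-1} is correct and standard: monotonicity of $S_t$ for the lower bound, and the telescoping identity $a_t/\sqrt{S_t}\le 2(\sqrt{S_t}-\sqrt{S_{t-1}})$ for the upper bound.

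You are also right to flag the second inequality. As transcribed in this paper, \eqref{eq:lem:adasps-trick-2} cannot hold: combining it with the lower bound of \eqref{eq:lem:adasps-trick-1} would force $\sqrt{S_T}\le\log S_T+1$, which fails for large $S_T$. The intended statement (and the one in the cited source) has $\sum_{s=0}^t a_s$ rather than $\sqrt{\sum_{s=0}^t a_s}$ in the denominator, i.e., it bounds $\sum_t a_t/S_t$, and your telescoping argument via $(b-c)/b\le\log(b/c)$ together with $a_0/S_0=1$ proves exactly that. Your parenthetical remark about ``following the conventions of the cited source'' is the right call; the square root in \eqref{eq:lem:adasps-trick-2} is a transcription error in this paper, and since only \eqref{eq:lem:adasps-trick-1} is used downstream it has no effect on the main results.
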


\begin{lemma}[Lemma 13, {\citep{jiang2023adaptive}}]
    \label{lem:adasps-finish}
    If $x^2\leq a(x+b)$ for $a\geq0$ and $b\geq0$, then it holds that $x\leq a+
    \sqrt{ab}$. 
\end{lemma}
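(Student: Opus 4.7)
The claim is a purely algebraic one-line lemma, so the plan is to treat the hypothesis as a quadratic inequality in $x$ and extract the bound from the roots. Rewrite $x^2 \le a(x+b)$ as
\begin{equation*}
    x^2 - ax - ab \le 0.
\end{equation*}
Since the leading coefficient is positive, this inequality holds precisely when $x$ lies between the two real roots of the corresponding quadratic (and the roots are real because the discriminant $a^2 + 4ab \ge 0$ under the sign hypotheses on $a,b$). In particular $x$ is at most the larger root,
\begin{equation*}
    x \le \frac{a + \sqrt{a^2 + 4ab}}{2}.
\end{equation*}

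The remaining step is to show that this upper bound is itself at most $a + \sqrt{ab}$, which is equivalent to
\begin{equation*}
    \sqrt{a^2 + 4ab} \le a + 2\sqrt{ab}.
\end{equation*}
Both sides are non-negative (using $a,b \ge 0$), so squaring is legitimate, and the inequality reduces to $a^2 + 4ab \le a^2 + 4a\sqrt{ab} + 4ab$, i.e., $0 \le 4a\sqrt{ab}$, which is immediate. Chaining the two inequalities gives $x \le a + \sqrt{ab}$, as required.

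There is no real obstacle here; the only small things to watch are the degenerate cases. If $a = 0$, then the hypothesis forces $x^2 \le 0$, hence $x \le 0 = a + \sqrt{ab}$, and the quadratic-formula step is vacuous. The hypothesis does not assume $x \ge 0$, but this is harmless: if $x \le 0$ then $x \le a + \sqrt{ab}$ automatically since the right-hand side is non-negative, and otherwise the quadratic-root argument above applies without modification. The whole argument is three or four lines and uses only the quadratic formula plus one squaring.
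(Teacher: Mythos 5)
Your proof is correct: the quadratic-formula bound $x \le \tfrac{1}{2}\bigl(a+\sqrt{a^2+4ab}\bigr)$ followed by $\sqrt{a^2+4ab}\le a+2\sqrt{ab}$ is exactly the standard argument for this lemma, and your handling of the degenerate cases $a=0$ and $x\le 0$ is careful and accurate. Note that the paper itself does not prove this statement but simply imports it as Lemma~13 of \citet{jiang2023adaptive}, where the proof proceeds along the same lines as yours.
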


\begin{lemma}[Lemma 16, {\citep{jiang2023adaptive}}]
    \label{lem:adasps-bounds}
    Suppose each $f_i$ is $L_i$-smooth, then the step-size of AdaSPS satisfies 
    \begin{align}
        \label{eq:lem:adasps-bounds}
        \frac{1}{2cL_{\max}}\frac{1}{\sqrt{\sum_{s=0}^tf_{S_s}(x^s)-\ell_{S_s}^*}}\leq
        \h_t\leq\frac{f_{S_t}(x^t)-\ell_{S_t}^*}{c\|\nabla f_{S_t}(x^t)\|^2}\frac{1}
        {\sqrt{\sum_{s=0}^tf_{S_s}(x^s)-\ell_{S_s}^*}},
    \end{align}
    where $L_{\max}=\max_{i\in[n]}\{L_i\}$. 
\end{lemma}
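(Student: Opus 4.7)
The plan is to establish both bounds separately, with the upper bound being essentially immediate and the lower bound requiring a short induction on $t$.

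For the upper bound, I would simply observe that by definition $\h_t$ is a minimum of two expressions, one of which is precisely the claimed upper bound (with $\h_{t-1}$ as the other). Therefore $\h_t$ is at most that first expression, which gives the right-hand inequality.

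For the lower bound, the key structural fact is that by $L_i$-smoothness of each $f_i$, $f_{S_t}$ is itself $L_{\max}$-smooth, so the standard quadratic-growth-from-smoothness estimate gives $\|\nabla f_{S_t}(x^t)\|^2 \le 2L_{\max}[f_{S_t}(x^t)-\ell_{S_t}^*]$. Substituting this into the first branch of the $\min$ yields
\[
\frac{f_{S_t}(x^t)-\ell_{S_t}^*}{c\|\nabla f_{S_t}(x^t)\|^2}\frac{1}{\sqrt{\sum_{s=0}^t f_{S_s}(x^s)-\ell_{S_s}^*}} \ge \frac{1}{2cL_{\max}\sqrt{\sum_{s=0}^t f_{S_s}(x^s)-\ell_{S_s}^*}},
\]
so if the minimum is attained by the first branch, we are done. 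The remaining case is when the minimum is attained by $\h_{t-1}$, and this is where induction comes in: I would prove the lower bound at step $t$ assuming it at step $t-1$. The base case $t=0$ is immediate because $\h_{-1}=+\infty$ forces $\h_0$ to equal the first branch, which we have just lower-bounded. For the inductive step in this second case, we use that the partial sum $\sum_{s=0}^t[f_{S_s}(x^s)-\ell_{S_s}^*]$ is non-decreasing in $t$, so the claimed lower bound is itself non-increasing in $t$; then $\h_t=\h_{t-1}$ combined with the inductive hypothesis at $t-1$ gives the desired bound at $t$.

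The main (very mild) obstacle is simply the case split in the minimum and ensuring the induction is clean; there is no real technical difficulty, since the smoothness inequality does all the work and the monotonicity of the cumulative sum handles the carry-over case. As a final remark, this lemma also implicitly records that $\h_t$ is non-increasing in $t$, which follows directly from the $\min$ with $\h_{t-1}$ and which is useful for subsequent convergence arguments in the analysis of \ref{eq:shb} with \ref{eq:moadasps}.
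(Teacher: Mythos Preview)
Your argument is correct. The paper does not actually supply its own proof of this lemma; it is quoted as Lemma~16 of \citet{jiang2023adaptive} and stated without proof in the preliminaries. Your approach---the upper bound read off directly from the $\min$, and the lower bound via the smoothness inequality $\|\nabla f_{S_t}(x^t)\|^2\le 2L_{\max}[f_{S_t}(x^t)-\ell_{S_t}^*]$ combined with a short induction using monotonicity of the cumulative sum---is exactly the standard argument one would expect and matches the reasoning in the cited reference.
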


\subsection{Iterate Moving Average}
\label{sec:ima}

As mentioned in the main paper, the iterates of the \ref{eq:shb} method are equal to the \ref{eq:ima} method. The following proposition is from \citet{sebbouh2021almost}. We include the proof for completeness. 

\begin{proposition}[Proposition 1.6, \citep{sebbouh2021almost}]
    \label{prop:ima-app}
    Let $\h_t>0$ and $\l_t\geq0$. Consider the Iterate Moving Average (IMA) method given by the following update rule:
    \begin{align}
        &z^{t+1}=z^t-\h_t\nabla f_{S_t}(x^t)\label{eq:ima-1}\\
        &x^{t+1}=\frac{\l_{t+1}}{\l_{t+1}+1}x^t+\frac{1}{\l_{t+1}+1}z^{t+1},\label{eq:ima-2}
    \end{align}
    where $z^0=x^0$. If the following equations hold for any $t\in\N$ 
    \begin{align}
        &1+\l_{t+1}=\frac{\l_t}{\b_t}\label{eq:ima-hb-equiv-1}\\
        &\h_t=(1+\l_{t+1})\g_t,\label{eq:ima-hb-equiv-2}
    \end{align}
    then the $x_t$ iterates of the \ref{eq:ima} method are equal to the $x_t$ iterates produced by the \ref{eq:shb} method. 
\end{proposition}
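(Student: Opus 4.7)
The plan is to eliminate the auxiliary sequence $z^t$ from the IMA recursion and recover the SHB update directly by algebra. The key observation is that the averaging step \eqref{eq:ima-2} can be inverted: multiplying through by $(\lambda_{t+1}+1)$ gives $z^{t+1} = (\lambda_{t+1}+1)\, x^{t+1} - \lambda_{t+1}\, x^t$. Shifting the index by one (with the convention $x^{-1} = x^0$, which is forced at $t=0$ by $z^0 = x^0$), one similarly obtains $z^t = (\lambda_t+1)\, x^t - \lambda_t\, x^{t-1}$.

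I would then substitute both expressions into the gradient step \eqref{eq:ima-1}, rearrange, and divide through by $(\lambda_{t+1}+1)$. Three simplifications then finish the computation: the coefficient of $x^{t-1}$ becomes $\lambda_t/(\lambda_{t+1}+1)$, which by \eqref{eq:ima-hb-equiv-1} equals $\beta_t$; the coefficient of $x^t$ collapses to $1+\beta_t$; and the gradient term is scaled by $\eta_t/(\lambda_{t+1}+1)$, which by \eqref{eq:ima-hb-equiv-2} equals $\gamma_t$. Collecting terms yields exactly the SHB recursion $x^{t+1} = x^t - \gamma_t \nabla f_{S_t}(x^t) + \beta_t(x^t - x^{t-1})$.

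To make the argument rigorous, I would proceed by induction on $t$. The base case $t=0$ must be handled separately: IMA gives $x^1 = x^0 - (\eta_0/(\lambda_1+1))\nabla f_{S_0}(x^0) = x^0 - \gamma_0 \nabla f_{S_0}(x^0)$, and SHB with $x^{-1}:=x^0$ gives the same, because the momentum term vanishes. For the inductive step, the manipulation above applies verbatim, with the inductive hypothesis ensuring that the $x^t$ and $x^{t-1}$ produced by IMA agree with those of SHB.

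The only real subtlety, which is the main bookkeeping point rather than a mathematical obstacle, is the initialization: one must check that $z^0 = x^0$ is consistent with the identity $z^t = (\lambda_t+1)x^t - \lambda_t x^{t-1}$ at $t=0$, which forces either $\lambda_0 = 0$ or $x^{-1} = x^0$; the latter is precisely the convention used in the SHB pseudocode. Beyond this, the equivalence is a direct algebraic identity and uses no property of $f$ whatsoever --- no smoothness, convexity, or stochasticity of $\nabla f_{S_t}$ enters the argument.
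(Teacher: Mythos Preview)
Your proposal is correct and takes essentially the same approach as the paper: both invert \eqref{eq:ima-2} to write $z^{t+1}=(\l_{t+1}+1)x^{t+1}-\l_{t+1}x^t$ (equivalently $z^{t+1}=x^{t+1}+\l_{t+1}(x^{t+1}-x^t)$), substitute this together with the shifted identity for $z^t$ into the gradient step, and then read off $\b_t$ and $\g_t$ via \eqref{eq:ima-hb-equiv-1}--\eqref{eq:ima-hb-equiv-2}. Your explicit induction and base-case check at $t=0$ (noting that $z^0=x^0$ forces the convention $x^{-1}=x^0$) make the argument slightly more careful than the paper's presentation, but the underlying computation is identical.
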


\begin{proof}
    We will start from the \ref{eq:ima} update rule and we will prove that its $(x^t)$ iterates satisfy the \ref{eq:shb} update rule. Suppose that we have the $(z^{t+1})$ and $(x^{t+1})$ iterates as well as the $(\l_t)$ and $(\h_t)$ sequences. Define the sequences $(\b_t)$ and $(\g_t)$ using \cref{eq:ima-hb-equiv-1,eq:ima-hb-equiv-2}. We will show that $(x^t)$ iterates satisfy the SHB update rule. 
    Solving for $z^{t+1}$ in \cref{eq:ima-2} we get 
    \begin{align}
        \label{eq:z-def}
        z^{t+1}:=x^{t+1}+\l_{t+1}(x^{t+1}-x^t)
    \end{align}
    Now substituting \cref{eq:ima-1,eq:z-def} in \cref{eq:ima-2} we have 
    \begin{align*}
        x^{t+1}
        &=\frac{\l_{t+1}}{\l_{t+1}+1}x^t+\frac{1}{\l_{t+1}+1}z^{t+1}\\
        \overset{(\ref{eq:ima-1})}&{=}\frac{\l_{t+1}}{\l_{t+1}+1}x^t+\frac{1}{\l_{t+1}+1}\left(z^t-\h_t\nabla f_{S_t}(x^t)\right)\\
        \overset{(\ref{eq:z-def})}&{=}\frac{\l_{t+1}}{\l_{t+1}+1}x^t+\frac{1}{\l_{t+1}+1}\left(x^t+\l_t(x^t-x^{t-1})-\h_t\nabla f_{S_t}(x^t))\right)\\
        &=x^t-\frac{\h_t}{\l_{t+1}+1}\nabla f_{S_t}(x^t)+\frac{\l_t}{\l_{+t+1}+1}(x^t-x^{t-1})\\
        \overset{(\ref{eq:ima-hb-equiv-1},\ref{eq:ima-hb-equiv-2})}&{=}x^t-\g_t\nabla f_{S_t}(x^t)+\b_t(x^t-x^{t-1}),
    \end{align*}
    as wanted. 
\end{proof}

\medskip
\noindent
The above proposition essentially states that momentum is a convex combination of SGD under a certain transformation. In this paper, we work only with a constant momentum coefficient, i.e., $\b_t=\b$. For this case, we also assume constant $\l_t$, and we have \Cref{pro:ima-ss} and \Cref{cor:trans-app}. 

\begin{proof}[Proof of \Cref{pro:ima-ss}]
    Setting $\l_t=\l$ in \Cref{eq:ima-hb-equiv-1} we have 
    \begin{align*}
        1+\l=\frac{\l}{\b_t}\Rightarrow\b_t=\frac{\l}{1+\l}=:\b.
    \end{align*}
    Then solving for $\l$ we get $\l=\frac{\b}{1-\b}$. Hence 
    \begin{align*}
        \g_t&=\frac{\h_t}{1+\l}\\
        &=\frac{\h_t}{1+\frac{\b}{1-\b}}\\
        &=(1-\b)\h_t
    \end{align*}
\end{proof}

\begin{proof}[Proof of \Cref{cor:trans-app}]
    \begin{itemize}
        \item Let $\h_t=\tn{SPS}_{\max}$. Then 
        \begin{align*}
            \g_t&=(1-\b)\h_t\\
            &=(1-\b)\min\left\{\frac{f_{S_t}(x^t)-\ell_{S_t}^*}{c\|\nabla f_{S_t}(x^t)\|^2}, \h_b\right\}\\
            &=(1-\b)\min\left\{\frac{f_{S_t}(x^t)-\ell_{S_t}^*}{c\|\nabla f_{S_t}(x^t)\|^2}, \g_b\right\}.
        \end{align*}
        Here $\g_b=\h_b$. 

        \item Let $\h_t=\tn{DecSPS}$. Then 
        \begin{align*}
            \g_t&=(1-\b)\h_t\\
            &=(1-\b)\min\left\{\frac{f_{S_t}(x^t)-\ell_{S_t}^*}{c_t\|\nabla f_{S_t}(x^t)\|^2},\frac{\h_{t-1}c_{t-1}}{c_t}\right\}\\
            &=\min\left\{\frac{(1-\b)[f_{S_t}(x^t)-\ell_{S_t}^*]}{c_t\|\nabla f_{S_t}(x^t)\|^2},\frac{(1-\b)\h_{t-1}c_{t-1}}{c_t}\right\}\\
            &=\min\left\{\frac{(1-\b)[f_{S_t}(x^t)-\ell_{S_t}^*]}{c_t\|\nabla f_{S_t}(x^t)\|^2},\frac{\g_{t-1}c_{t-1}}{c_t}\right\},
        \end{align*}
        since $\g_{t-1}=(1-\b)\h_{t-1}$ from \Cref{pro:ima-ss}. 

        \item Let $\h_t=\tn{AdaSPS}$. Then 
        \begin{align*}
            \g_t&=(1-\b)\h_t\\
            &=(1-\b)\min\left\{\frac{f_{S_t}(x^t)-\ell_{S_t}^*}{c\|\nabla f_{S_t}(x^t)\|^2\sqrt{\sum_{s=0}^tf_{S_s}(x^s)-\ell_{S_s}^*}},\h_{t-1}\right\}\\
            &=\min\left\{\frac{(1-\b)[f_{S_t}(x^t)-\ell_{S_t}^*]}{c\|\nabla f_{S_t}(x^t)\|^2\sqrt{\sum_{s=0}^tf_{S_s}(x^s)-\ell_{S_s}^*}},(1-\b)\h_{t-1}\right\}\\
            &=\min\left\{\frac{(1-\b)[f_{S_t}(x^t)-\ell_{S_t}^*]}{c\|\nabla f_{S_t}(x^t)\|^2\sqrt{\sum_{s=0}^tf_{S_s}(x^s)-\ell_{S_s}^*}},\g_{t-1}\right\}.
        \end{align*}
    \end{itemize}
\end{proof}

\subsection{Connection of MomSPS\texorpdfstring{$_{\max}$}{max} and SPS\texorpdfstring{$_{\max}$}{max}}

In this section, we explore the connection between the \ref{eq:mospsmax} and the \ref{eq:spsmax} step-sizes. In particular, one can view the \ref{eq:mospsmax} step-size as the \ref{eq:spsmax} step-size with $c=1/(1-\b)$. Here, we explain why these are two different step-sizes. Recall from \Cref{pro:ima-ss} and \Cref{cor:trans-app} that our proposed step-sizes for the \ref{eq:shb} setting are scaled versions of Polyak step-sizes for SGD. In particular, let's assume that 
\begin{align*}
    \g_t^{SPSmax}=\min\left\{\frac{f_{S_t}(x^t)-f_{S_t}^*}{c_{SPSmax}\|\nabla f_{S_t}(x^t)\|^2},\g_b^{SPSmax}\right\}
\end{align*}
and 
\begin{align*}
    \g_t^{MomSPSmax}=(1-\b)\min\left\{\frac{f_{S_t}(x^t)-f_{S_t}^*}{c_{MomSPSmax}\|\nabla f_{S_t}(x^t)\|^2},\g_b^{MomSPSmax}\right\}
\end{align*}
so we have $\g^t_{SPSmax}=\g^t_{MomSPSmax}$ if and only if 
\begin{align*}
    \frac{1-\b}{c_{MomSPSmax}}=\frac{1}{c_{SPSmax}}\text{ and }\g_b^{SPSmax}=(1-\b)\g_b^{MomSPSmax}.
\end{align*}
Without loss of generalization, we can assume that $c_{MomSPSmax}=1$ (this is also what \Cref{thm:shb-sps-max} guarantees). In this case we get $\b=1-\frac{1}{c_{SPSmax}}$. Hence, we can restate \Cref{thm:shb-sps-max} as follows: 
\begin{theorem}
    Assume that each $f_i$ is convex and $L_i$-smooth. Then, the iterates of \ref{eq:shb} with SPS$_{\max}$ with $c\in\left[1,\frac{2\g_b-2\a}{2\g_b-\a}\right)$ and $\b=1-\frac{1}{c}$ where $\a=\min\left\{\frac{1}{2L_{\max}},\g_b\right\}$ and $L_{\max}=\max_{i\in[n]}\{L_i\}$, converge as 
    \begin{equation*}
        \E[f(\overline{x}^T)-f(x^*)]\leq\frac{C_1\|x^0-x^*\|^2}{T}+C_2\s^2,
    \end{equation*}
    where $\overline{x}^T=\frac{1}{T}\sum_{t=0}^{T-1}x^t$ is the Cesaro average and the constants $C_1=\frac{1-\b}{\a\b+\a-2\b\g_b}$ and $C_2=\frac{2\g_b-\a\b-\a}{\a\b+\a-2\b\g_b}$. 
\end{theorem}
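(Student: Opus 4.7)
The plan is to observe that this restatement is \Cref{thm:shb-sps-max} after the reparametrization $c = 1/(1-\b)$ (equivalently $\b = 1 - 1/c$), so it suffices to prove \Cref{thm:shb-sps-max}. For that, I would exploit \Cref{cor:trans-app}: \ref{eq:shb} with \ref{eq:mospsmax} coincides with the Iterate Moving Average iteration $z^{t+1} = z^t - \h_t \nabla f_{S_t}(x^t)$, $x^{t+1} = \b x^t + (1-\b)z^{t+1}$, driven by $\h_t = \tn{SPS}_{\max}$ (with $c=1$) and $\l = \b/(1-\b)$. Since IMA is just SGD applied to the auxiliary sequence $z^t$, I would analyze $\|z^{t+1} - x^*\|^2$ directly. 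Expanding and invoking $z^t = x^t + \l(x^t - x^{t-1})$ (so $z^0 = x^0$ under the convention $x^{-1} = x^0$) gives
\begin{equation*}
\|z^{t+1} - x^*\|^2 = \|z^t - x^*\|^2 - 2\h_t \langle \nabla f_{S_t}(x^t),\, (x^t - x^*) + \l(x^t - x^{t-1})\rangle + \h_t^2 \|\nabla f_{S_t}(x^t)\|^2.
\end{equation*}

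Next I would apply convexity of $f_{S_t}$ to each inner product and invoke \Cref{lem:sps-bound} to bound $\h_t^2\|\nabla f_{S_t}(x^t)\|^2 \leq \h_t[f_{S_t}(x^t) - \ell_{S_t}^*]$. The key bookkeeping step is to rewrite every $f_{S_t}(y) - f_{S_t}(x^*)$ as $[f_{S_t}(y) - \ell_{S_t}^*] - [f_{S_t}(x^*) - \ell_{S_t}^*]$ and collect; the $[f_{S_t}(x^*) - \ell_{S_t}^*]$ coefficients combine cleanly via $(1+2\l) - 2\l + 1 = 2$, producing
\begin{equation*}
\|z^{t+1} - x^*\|^2 \leq \|z^t - x^*\|^2 - (1+2\l)\h_t[f_{S_t}(x^t) - \ell_{S_t}^*] + 2\l\h_t[f_{S_t}(x^{t-1}) - \ell_{S_t}^*] + 2\h_t[f_{S_t}(x^*) - \ell_{S_t}^*].
\end{equation*}
Each bracket is now non-negative, so the uniform bounds from \Cref{lem:sps-bound} can be inserted in the correct direction: $\h_t \geq \a$ for the first (negative) term, $\h_t \leq \g_b$ for the two positive ones.

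Taking conditional expectation over $S_t$, using $\E[f_{S_t}(y)] = f(y)$ for deterministic $y$ and $\E[f_{S_t}(x^*) - \ell_{S_t}^*] = \s^2$, and letting $a_t := \E[f(x^t) - f(x^*)]$, I obtain the mixed recurrence
\begin{equation*}
\E\|z^{t+1} - x^*\|^2 \leq \E\|z^t - x^*\|^2 - (1+2\l)\a\, a_t + 2\l\g_b\, a_{t-1} + K\s^2, \q K = 2(1+\l)\g_b - (1+2\l)\a.
\end{equation*}
Summing from $t=0$ to $T-1$, telescoping the $\|z^t - x^*\|^2$ terms (starting from $z^0 = x^0$), discarding the non-negative $\E\|z^T - x^*\|^2$, and reindexing the shifted sum with boundary $a_{-1} = a_0$, the net coefficient on $\sum_{t=0}^{T-1} a_t$ becomes $(1+2\l)\a - 2\l\g_b = [\a(1+\b) - 2\b\g_b]/(1-\b)$, which is positive \emph{exactly} under the stated restriction $\b < \a/(2\g_b - \a)$. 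Convexity of $f$ gives $f(\overline{x}^T) - f(x^*) \leq \tfrac{1}{T}\sum_t [f(x^t) - f(x^*)]$, and substituting $\l = \b/(1-\b)$ into $1/[(1+2\l)\a - 2\l\g_b]$ and $K/[(1+2\l)\a - 2\l\g_b]$ recovers $C_1$ and $C_2$ exactly.

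The main obstacle is the collection step: unless the three contributions to $[f_{S_t}(x^*) - \ell_{S_t}^*]$ (from the two convexity splits and the SPS identity) are regrouped under the common factor $\h_t$ \emph{before} the uniform bounds are applied, one ends up with a loose $\s^2$-coefficient $(2+2\l)\g_b - 2\l\a$ in place of the tight $K = 2(1+\l)\g_b - (1+2\l)\a$, which would spoil $C_2$. A secondary subtlety is the boundary at $t=0$: because $x^{-1} = x^0$ makes the convexity lower bound on $\langle \nabla f_{S_0}(x^0), x^0 - x^{-1}\rangle$ tight (both sides vanish), the general recurrence is slightly lossy at the first step, and care is needed in the telescoping so that any residual $a_0$ contribution does not inflate the $\|x^0 - x^*\|^2$ coefficient past $C_1$.
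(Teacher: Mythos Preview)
Your proposal is correct and follows essentially the same route as the paper: reduce the restatement to \Cref{thm:shb-sps-max} via the reparametrization $\b=1-1/c$, pass to the IMA form with $\l=\b/(1-\b)$ and $\h_t=\tn{SPS}_{\max}$, expand $\|z^{t+1}-x^*\|^2$, apply convexity and the SPS identity from \Cref{lem:sps-bound}, regroup all terms over the common factor $\h_t$ before invoking the uniform bounds $\a\le\h_t\le\g_b$, then telescope and use Jensen. The two subtleties you flag (keeping the $\s^2$ coefficient tight by collecting \emph{before} bounding $\h_t$, and the $t=0$ boundary) are precisely the delicate points in the paper's own argument, and your plan handles them the same way.
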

This means that \ref{eq:mospsmax} can be viewed as a special case of SPS$_{\max}$. However, note that the update rules are different since in \ref{eq:shb} we have the extra term $+\b(x^t-x^{t-1})$ so a different analysis is needed. 

\medskip
\noindent
The same line of thought can be applied to other works related to HB. For example, in \citet{ghadimi2015global}, the first global guarantees for HB were established. The authors show that for a convex and $L$-smooth function $f$ HB with any $\b\in[0,1)$ and $0<\g<\frac{2(1-\b)}{L}$ converges with a rate of $O(1/T)$. Also, recall that GD for convex and $L$-smooth function converges for $0<\g<\frac{2}{L}$ with rate $O(1/T)$. Hence, we can restate the result of \citet{ghadimi2015global} as follows: Let $f$ be a convex and $L$-smooth function. Then HB with $\g=\frac{c}{L}$, where $c\in(0,2)$, and $\b=1-\frac{c}{2}$ converges with rate $O(1/T)$. 

\medskip
\noindent
Similarly, for SGD in the convex and smooth setting, we are guaranteed convergence $O(1/T)$ to a neighborhood of the solution when $\g\leq\frac{1}{L}$. In \citet{liu2020improved}, the authors guarantee that \ref{eq:shb} with any $\b\in(0,1)$ and $\g\leq\frac{(1-\b)^2}{L}\min\left\{\frac{1}{4-\b+\b^2},\frac{1}{2\sqrt{2\b^2+2\b}}\right\}$ converges with rate $O(1/T)$ to a neighborhood. This can be restated as follows: \ref{eq:shb} with $\g=\frac{c}{L}$, where $c\in(0,1)$, and $\b$ given by the solution of $c=\min\left\{\frac{(1-\b)^2}{4-\b+\b^2},\frac{(1-\b)^2}{2\sqrt{2\b^2+2\b}}\right\}$, converges with rate $O(1/T)$ to a neighborhood. 

\newpage
\section{Proofs of the main results}
\label{sec:proofs}

In this section we present the proofs of the main theoretical results presented in the main paper, i.e., the convergence analysis of \ref{eq:shb} with \ref{eq:mospsmax}, \ref{eq:modecsps} and \ref{eq:moadasps} for convex and smooth functions $f_i$ and $f$ of Problem \ref{eq:main-problem}. The idea for all the proofs is that we firstly show the corresponding result in the \ref{eq:ima} setting, and then we transfer it to the \ref{eq:shb} setting with the help of \Cref{pro:ima-ss}. 

\medskip
\noindent
Compared to the analysis of SGD with SPS, the \ref{eq:shb} update rule requires taking into consideration extra terms related to the previous iterate $x^{t-1}$. This needs new algebraic tricks in order to use convexity, and after that, one needs to deal with the two quantities $f(x^t)$ and $f(x^{t-1})$ at the same time. We deal with this using the \ref{eq:ima} framework, similarly with \citet{sebbouh2021almost}. However, we highlight that we finish our proofs with a different approach. More specifically, in the proof of Theorem G.1 in \citet{sebbouh2021almost} the authors choose the momentum coefficients $\lambda_t$ in such a way that the main sum telescopes and the final result only has the expectation of the last iterate. In our proof, we make no such simplifications, and instead, we use Jensen’s inequality to finish. This approach allows us to have more freedom in the selection of $\lambda_t$. This is more prominent in the proofs of the decreasing variants. 

\medskip
\noindent
Compared to the classical analysis of constant step-size \ref{eq:shb}, the use of SPS requires an adaptive step-size that uses the loss and stochastic gradient estimates at an iterate, resulting in correlations of the step-size with the gradient, i.e. we might have $\mathbb{E}[\gamma_t\nabla f_i(x^t)]\neq\gamma_t\mathbb{E}[\nabla f_i(x^t)]$. One of the technical challenges in the proofs is to carefully analyze the \ref{eq:shb} iterates, taking these correlations into account. Moreover, since we try to be adaptive to the Lipschitz constant, we can not use any standard descent lemmas (implied by the smoothness and the \ref{eq:shb} update) that are used in the classical analysis of SGD and constant step-size \ref{eq:shb}. This makes the convex proof more challenging than the standard analysis of \ref{eq:shb}.

\subsection{Proof of \texorpdfstring{\Cref{thm:shb-sps-max}}{Theorem 3.1}}

First, we state and prove the corresponding theorem for SPS$_{\max}$ in the \ref{eq:ima} setting. 

\begin{theorem}
    \label{thm:ima-sps-max}
    Assume that each $f_i$ is convex and $L_i$-smooth. Then, the iterates of \ref{eq:ima} with 
    \begin{align*}
        \h_t=\min\left\{\frac{f_{S_t}(x^t)-\ell_{S_t}^*}{\|\nabla f_{S_t}(x^t)\|^2},\h_b\right\}\text{ and }\l_t=\l\in\left[0,\frac{\a}{2(\h_b-\a)}\right),
    \end{align*}
    where $\a=\min\left\{\frac{1}{2L_{\max}},\h_b\right\}$ and $L_{\max}=\max_{i\in[n]}\{L_i\}$, converge as 
    \begin{align*}
        \E[f(\overline{x}^T)-f(x^*)]
        \leq\frac{\|x^0-x^*\|^2}{(2\a\l+\a-2\h_b\l)T}+\frac{(2\h_b+2\h_b\l-2\a\l-\a)\s^2}{2\a\l+\a-2\h_b\l},
    \end{align*}
    where $\overline{x}^T=\frac{1}{T}\sum_{t=0}^{T-1}x^t$ and $\s^2=\E[f_i(x^*)-f_i^*]$. 
\end{theorem}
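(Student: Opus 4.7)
The plan is to analyze the IMA recursion directly and derive the advertised bound, since the theorem is the IMA-side counterpart of \Cref{thm:shb-sps-max} and can then be transferred back to SHB via \Cref{pro:ima-ss}. First I would expand $\|z^{t+1}-x^*\|^2$ using the IMA update $z^{t+1}=z^t-\h_t\nabla f_{S_t}(x^t)$. For $t\geq 1$ the IMA definition with constant $\l$ yields the identity $z^t - x^* = (x^t-x^*)+\l(x^t-x^{t-1})$, which lets me split the inner product $\langle \nabla f_{S_t}(x^t), z^t-x^*\rangle$ into two pieces and apply convexity of $f_{S_t}$ to each, giving
\begin{equation*}
    \langle \nabla f_{S_t}(x^t), z^t-x^*\rangle \geq [f_{S_t}(x^t)-f_{S_t}(x^*)] + \l[f_{S_t}(x^t)-f_{S_t}(x^{t-1})].
\end{equation*}
Combining with the SPS$_{\max}$ step-size bound $\h_t^2\|\nabla f_{S_t}(x^t)\|^2 \leq \h_t[f_{S_t}(x^t)-\ell_{S_t}^*]$ from \Cref{lem:sps-bound} (valid since $c=1$) and collecting terms produces the one-step inequality
\begin{equation*}
    \|z^{t+1}-x^*\|^2 \leq \|z^t-x^*\|^2 - (1+2\l)\h_t[f_{S_t}(x^t)-f_{S_t}(x^*)] + 2\h_t\l[f_{S_t}(x^{t-1})-f_{S_t}(x^*)] + \h_t[f_{S_t}(x^*)-\ell_{S_t}^*].
\end{equation*}

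The main obstacle is to replace the adaptive, $S_t$-correlated $\h_t$ by deterministic constants so that expectations and telescoping become tractable; the expectation of a product with $\h_t$ does not split. The key algebraic trick is to rewrite each $f_{S_t}(x^s)-f_{S_t}(x^*) = [f_{S_t}(x^s)-\ell_{S_t}^*] - [f_{S_t}(x^*)-\ell_{S_t}^*]$, where both bracketed quantities are nonnegative, and then apply the two-sided control $\a \leq \h_t \leq \h_b$ from \Cref{lem:sps-bound} sign by sign (using $\h_t\geq\a$ on the negative contributions and $\h_t\leq\h_b$ on the positive ones). After this bookkeeping and taking total expectation, with $\E[f_{S_t}(x^*)-\ell_{S_t}^*]=\s^2$ and $\phi_s := \E[f(x^s)-f(x^*)]$, the $\h_t$-coefficients collapse into constants and I arrive at
\begin{equation*}
    \E\|z^{t+1}-x^*\|^2 \leq \E\|z^t-x^*\|^2 - (1+2\l)\a\,\phi_t + 2\h_b\l\,\phi_{t-1} + [2\h_b(1+\l) - \a(1+2\l)]\,\s^2.
\end{equation*}

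To finish, I would sum this recursion from $t=0$ to $T-1$, treating the $t=0$ step separately since $z^0=x^0$ makes the momentum contribution vanish there. The sums $\sum\phi_{t-1}$ and $\sum\phi_t$ then differ only by boundary terms in $\phi_0$ and $\phi_{T-1}$; dropping the nonnegative quantities $\E\|z^T-x^*\|^2$ and $\phi_{T-1}$, and absorbing the remaining $O(\phi_0)$ boundary term through smoothness (using $\phi_0\leq\tfrac{L}{2}\|x^0-x^*\|^2$ together with $\a L\leq\tfrac{1}{2}$, which is ensured by the definition $\a=\min\{1/(2L_{\max}),\h_b\}$), I obtain
\begin{equation*}
    [(1+2\l)\a - 2\h_b\l]\sum_{t=0}^{T-1}\phi_t \leq \|x^0-x^*\|^2 + T\,[2\h_b(1+\l)-\a(1+2\l)]\,\s^2.
\end{equation*}
Dividing by $T$ and invoking Jensen's inequality $f(\overline{x}^T)\leq\tfrac{1}{T}\sum_{t=0}^{T-1}f(x^t)$ produces the advertised bound. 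The hypothesis $\l<\a/(2(\h_b-\a))$ is exactly what is needed to keep the contraction coefficient $(1+2\l)\a-2\h_b\l$ positive; without it the final division would be vacuous. Translating back to SHB via $\b=\l/(1+\l)$ and $\g_t=(1-\b)\h_t$ from \Cref{pro:ima-ss} will recover \Cref{thm:shb-sps-max}.
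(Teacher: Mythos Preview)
Your proof follows the paper's argument essentially step for step: expand $\|z^{t+1}-x^*\|^2$, use convexity through $z^t-x^*=(x^t-x^*)+\l(x^t-x^{t-1})$, control $\h_t^2\|\nabla f_{S_t}(x^t)\|^2$ via \Cref{lem:sps-bound}, decouple the adaptive $\h_t$ from the stochastic terms by the sign-aware sandwich $\a\le\h_t\le\h_b$ on the $[\,\cdot\,-\ell_{S_t}^*]$ pieces, then take expectation, telescope, and finish with Jensen---this is exactly the paper's route.

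The only deviation is your treatment of the $\phi_0$ boundary term after telescoping. The paper disposes of it by a pure index-shift argument, in effect exploiting that at $t=0$ the IMA update has no momentum contribution (the point you also observe), so one can take $\l_0=0$ in the analysis without loss. Your alternative of invoking smoothness $\phi_0\le\tfrac{L}{2}\|x^0-x^*\|^2$ together with $\a L\le\tfrac12$ is valid, but it does not deliver the coefficient $\|x^0-x^*\|^2$ on the nose: the leftover is $2\a\l\phi_0\le\a\l L\|x^0-x^*\|^2\le\tfrac{\l}{2}\|x^0-x^*\|^2$, so you would obtain $(1+\tfrac{\l}{2})\|x^0-x^*\|^2$ rather than $\|x^0-x^*\|^2$ in the numerator. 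If you want the exact stated constant, drop the smoothness step and use the index shift instead; otherwise the argument is correct and the rate is identical.
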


\begin{proof}
    We have 
    \begin{align*}
        \|z^{t+1}-x^*\|^2
        &=\|z^t-x^*\|^2-2\h_t\langle\nabla f_{S_t}(x^t),z^t-x^*\rangle+\h_t^2\|\nabla f_{S_t}(x^t)\|^2\\
        \overset{(\ref{eq:z-def})}&{=}\|z^t-x^*\|^2-2\h_t\langle\nabla f_{S_t}(x^t),x^t-x^*\rangle-2\h_t\l_t\langle\nabla f_{S_t}(x^t),x^t-x^{t-1}\rangle\\
        &\q+\h_t^2\|\nabla f_{S_t}(x^t)\|^2\\
        \overset{\text{convexity}}&{\leq}\|z^t-x^*\|^2-2\h_t[f_{S_t}(x^t)-f_{S_t}(x^*)]-2\h_t\l_t[f_{S_t}(x^t)-f_{S_t}(x^{t-1})]\\
        &\q+\h_t^2\|\nabla f_{S_t}(x^t)\|^2\\
        \overset{(\ref{eq:lem:sps-bound})}&{\leq}\|z^t-x^*\|^2-2\h_t[f_{S_t}(x^t)-f_{S_t}(x^*)]-2\h_t\l_t[f_{S_t}(x^t)-f_{S_t}(x^{t-1})]\\
        &\q+\h_t[f_{S_t}(x^t)-\ell_{S_t}^*]\\
        &=\|z^t-x^*\|^2-2\h_t[f_{S_t}(x^t)-\ell_{S_t}^*]+2\h_t[f_{S_t}(x^*)-\ell_{S_t}^*]-2\h_t\l_t[f_{S_t}(x^t)-\ell_{S_t}^*]\\
        &\quad+2\h_t\l_t[f_{S_t}(x^{t-1})-\ell_{S_t}^*]+\h_t[f_{S_t}(x^t)-\ell_{S_t}^*]\\
        &=\|z^t-x^*\|^2-2\h_t\left(1+\l_t-\frac{1}{2}\right)[f_{S_t}(x^t)-\ell_{S_t}^*]+2\h_t\l_t[f_{S_t}(x^{t-1})-\ell_{S_t}^*]\\
        &\quad+2\h_t[f_{S_t}(x^*)-\ell_{S_t}^*]\\
        \overset{(\ref{eq:lem:sps-bound})}&{\leq}\|z^t-x^*\|^2-2\a\left(\l_t+\frac{1}{2}\right)[f_{S_t}(x^t)-\ell_{S_t}^*]+2\h_b\l_t[f_{S_t}(x^{t-1})-\ell_{S_t}^*]\\
        &\quad+2\h_b[f_{S_t}(x^*)-\ell_{S_t}^*]\\
        &\leq\|z^t-x^*\|^2-2\a\left(\l_t+\frac{1}{2}\right)[f_{S_t}(x^t)-f_{S_t}(x^*)]-2\a\left(\l_t+\frac{1}{2}\right)[f_{S_t}(x^*)-\ell_{S_t}^*]\\
        &\q+2\h_b\l_t[f_{S_t}(x^{t-1})-f_{S_t}(x^*)]+2\h_b\l_t[f_{S_t}(x^*)-\ell_{S_t}^*]+2\h_b[f_{S_t}(x^*)-\ell_{S_t}^*].
    \end{align*}
    Now take expectation conditional on $x^t$ to get 
    \begin{align*}
        \E_{S_t}\|z^{t+1}-x^*\|^2
        &\leq\E_{S_t}\|z^t-x^*\|^2-2\a\left(\l_t+\frac{1}{2}\right)\E_{S_t}[f_{S_t}(x^t)-f_{S_t}(x^*)]\\
        &\q-2\a\left(\l_t+\frac{1}{2}\right)\E_{S_t}[f_{S_t}(x^*)-\ell_{S_t}^*]+2\h_b\l_t\E_{S_t}[f_{S_t}(x^{t-1})-f_{S_t}(x^*)]\\
        &\q+2\h_b\l_t\E_{S_t}[f_{S_t}(x^*)-\ell_{S_t}^*]+2\h_b\E_{S_t}[f_{S_t}(x^*)-\ell_{S_t}^*]\\
        &=\|z^t-x^*\|^2-2\a\left(\l_t+\frac{1}{2}\right)[f(x^t)-f(x^*)]-2\a\left(\l_t+\frac{1}{2}\right)\s^2\\
        &\quad+2\h_b\l_t[f(x^{t-1})-f(x^*)]+2\h_b\l_t\s^2+2\h_b\s^2\\
        &=\|z^t-x^*\|^2-2\a\left(\l_t+\frac{1}{2}\right)[f(x^t)-f(x^*)]+
        2\h_b\l_t[f(x^{t-1})-f(x^*)]\\
        &\quad+2\left(\h_b\left(1+\l_t\right)-\a\left(\l_t+\frac{1}{2}\right)\right)\s^2.\numberthis\label{eq:thm:ima-sps-max-coeff}
    \end{align*}
    Note that in \cref{eq:thm:ima-sps-max-coeff} the coefficient $2\left(\h_b\left(1+\l_t\right)-\a\left(\l_t+\frac{1}{2}\right)\right)$ of $\s^2$ is positive since $\h_b\geq\a$. Taking expectation again and using the tower property we have 
    \begin{align*}
        &2\a\left(\l_t+\frac{1}{2}\right)\E[f(x^t)-f(x^*)]-2\h_b\l_t\E[f(x^{t-1})
        -f(x^*)]\\
        &\leq\E\|z^t-x^*\|^2-\E\|z^{t+1}-x^*\|^2+2\left(\h_b\left(1+\l_t\right)-
        \a\left(\l_t+\frac{1}{2}\right)\right)\s^2.\numberthis\label{eq:thm:ima-sps-max-sum}
    \end{align*}
    Summing \cref{eq:thm:ima-sps-max-sum} for $t=0,\dots,T-1$ and telescoping we get 
    \begin{align*}
        \sum_{t=0}^{T-1}&\left[2\a\left(\l_t+\frac{1}{2}\right)\E[f(x^t)-f(x^*)]-2\h_b\l_t\E[f(x^{t-1})-f(x^*)]\right]\\
        &\leq\sum_{t=0}^{T-1}\left[\E\|z^t-x^*\|^2-\E\|z^{t+1}-x^*\|^2\right]+\sum_{t=0}^{T-1}2\left(\h_b\left(1+\l_t\right)-\a\left(\l_t+\frac{1}{2}\right)\right)\s^2\\
        &=\E\|z^0-x^*\|^2-\E\|z^T-x^*\|^2+\sum_{t=0}^{T-1}2\left(\h_b\left(1+\l_t\right)-\a\left(\l_t+\frac{1}{2}\right)\right)\s^2\\
        \overset{z^0=x^0}&{\leq}\|x^0-x^*\|^2+\sum_{t=0}^{T-1}2\left(\h_b\left(1+\l_t\right)-\a\left(\l_t+\frac{1}{2}\right)\right)\s^2.\numberthis\label{eq:thm:ima-sps-max-main}
    \end{align*}
    The LHS of \cref{eq:thm:ima-sps-max-main} can be bounded as follows (using $x^{-1}=x^0$) 
        \begin{align*}
        &\sum_{t=0}^{T-1}\left[2\a\left(\l_t+\frac{1}{2}\right)\E[f(x^t)-f(x^*)]-2\h_b\l_t\E[f(x^{t-1})-f(x^*)]\right]\\
        &=\sum_{t=0}^{T-1}2\a\left(\l_t+\frac{1}{2}\right)\E[f(x^t)-f(x^*)]-\sum_{t=0}^{T-1}2\h_b\l_t\E[f(x^{t-1})-f(x^*)]\\
        &=\sum_{t=0}^{T-1}2\a\left(\l_t+\frac{1}{2}\right)\E[f(x^t)-f(x^*)]-\sum_{t=0}^{T-2}2\h_b\l_{t+1}\E[f(x^t)-f(x^*)]\\
        &=2\a\left(\l_{T-1}+\frac{1}{2}\right)\E[f(x^{T-1})-f(x^*)]+\sum_{t=0}^{T-2}2\a\left(\l_t+\frac{1}{2}\right)\E[f(x^t)-f(x^*)]\\
        &\q-\sum_{t=0}^{T-2}2\h_b\l_{t+1}\E[f(x^t)-f(x^*)]\\
        &=2\a\left(\l_{T-1}+\frac{1}{2}\right)\E[f(x^{T-1})-f(x^*)]+\sum_{t=0}^{T-2}\left[2\a\left(\l_t+\frac{1}{2}\right)-2\h_b\l_{t+1}\right]\E[f(x^t)-f(x^*)]\\
        &\geq\left[2\a\left(\l_{T-1}+\frac{1}{2}\right)-2\h_b\l_T\right]\E[f(x^{T-1})-f(x^*)]\\
        &\q+\sum_{t=0}^{T-2}\left[2\a\left(\l_t+\frac{1}{2}\right)-2\h_b\l_{t+1}\right]\E[f(x^t)-f(x^*)]\\
        &=\sum_{t=0}^{T-1}\left[2\a\left(\l_t+\frac{1}{2}\right)-2\h_b\l_{t+1}\right]\E[f(x^t)-f(x^*)].\numberthis\label{eq:thm:ima-sps-max-bound}
    \end{align*}
    Combining \cref{eq:thm:ima-sps-max-main} and \cref{eq:thm:ima-sps-max-bound} we have 
    \begin{align*}
        \label{eq:thm:ima-sps-max-lastt}
        &\sum_{t=0}^{T-1}\left[2\a\left(\l_t+\frac{1}{2}\right)-2\h_b\l_{t+1}\right]\E[f(x^t)-f(x^*)]\\
        &\leq\|x^0-x^*\|^2+\sum_{t=0}^{T-1}2\left(\h_b\left(1+\l_t\right)-\a\left(\l_t+\frac{1}{2}\right)\right)\s^2.\numberthis
    \end{align*}
    In \cref{eq:thm:ima-sps-max-lastt}, setting $\l_t=\l,\forall t>0$ with $0\leq\l<\frac{\a}{2(\h_b-\a)}$ (in order to ensure that quantity $2\a\left(\l+\frac{1}{2}\right)-2\h_b\l$ in the LHS is positive) we get 
    \begin{align}
        \label{eq:thm:ima-sps-max-last}
        \left[2\a\left(\l+\frac{1}{2}\right)-2\h_b\l\right]\sum_{t=0}^{T-1}\E[f(x^t)-f(x^*)]\leq\|x^0-x^*\|^2+2T\left(\h_b\left(1+\l\right)-\a\left(\l+\frac{1}{2}\right)\right)\s^2.
    \end{align}
    Now dividing \cref{eq:thm:ima-sps-max-last} by $T\left[2\a\left(\l+\frac{1}{2}\right)-2\h_b\l\right]$ and using Jensen's inequality we get 
    \begin{align*}
        \E[f(\overline{x}^T)-f(x^*)]
        &\leq\frac{1}{T}\sum_{t=0}^{T-1}\E[f(x^t)-f(x^*)]\\
        &\leq\frac{\|x^0-x^*\|^2+T\left(2\h_b+2\h_b\l-2\a\l-\a\right)\s^2}{(2\a\l+\a-2\h_b\l)T}\\
        &=\frac{\|x^0-x^*\|^2}{(2\a\l+\a-2\h_b\l)T}+\frac{2\h_b+2\h_b\l-2\a\l-\a}{2\a\l+\a-2\h_b\l}\s^2,
    \end{align*}
    as wanted.
\end{proof}

\medskip
\noindent
Now the proof of \Cref{thm:shb-sps-max} follows immediately from \Cref{thm:ima-sps-max} setting $\l=\frac{\b}{1-\b}$ (from \Cref{pro:ima-ss}) and $\g_b=\h_b$ (from \Cref{cor:trans-app}). The bound on $\b=\frac{\l}{1+\l}$ follows from the bound on $\l$ and from the fact that the function $f(x)=\frac{x}{1+x}$ is strictly increasing for $x\geq0$.

\subsection{Proof of \texorpdfstring{\Cref{thm:shb-dec-sps}}{Theorem 3.5}}

Here we state and prove the corresponding theorem for DecSPS in the \ref{eq:ima} setting. 

\begin{theorem}
    \label{thm:ima-dec-sps}
    Assume that each $f_i$ is convex and $L_i$-smooth. Then, the iterates of \ref{eq:ima} with 
    \begin{align*}
        \h_t=\frac{1}{c_t}\min\left\{\frac{f_{S_t}(x^t)-\ell_{S_t}^*}{\|\nabla f_{S_t}(x^t)\|^2},
        c_{t-1}\h_{t-1}\right\}\text{ and }\l_t\geq0,
    \end{align*}
    where $(c_t)$ is an increasing sequences with $c_t\geq1$, $c_{-1}=c_0$, $\h_{-1}=\h_b$ and $(\l_t)$ is a decreasing sequence, converge as  
    \begin{align*}
        \E[f(\overline{x}^T)-f(x^*)]\leq\frac{2\l_1[f(x^0)-f(x^*)]}{T}+
        \frac{c_{T-1}D^2_z}{T\a}+\frac{\s^2}{T}\sum_{t=0}^{T-1}\frac{1}{c_t},
    \end{align*}
    where $\overline{x}^T=\frac{1}{T}\sum_{t=0}^{T-1}x^t$, $\a=\min\left\{\frac{1}{2L},\h_bc_0\right\}$ and $D^2_z=\max_{t\in[T]}\|z^t-x^*\|^2$. 
\end{theorem}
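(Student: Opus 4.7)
The plan is to follow the template of the proof of Theorem \ref{thm:ima-sps-max}, swapping the SPS$_{\max}$ step-size bounds for the DecSPS analogues provided by Lemma \ref{lem:decsps-bound}. First I would expand $\|z^{t+1} - x^*\|^2$ using the IMA update $z^{t+1} = z^t - \h_t \nabla f_{S_t}(x^t)$ and substitute the identity $z^t = x^t + \l_t(x^t - x^{t-1})$; this splits the cross-term into a contribution at $x^t - x^*$ and a momentum contribution at $x^t - x^{t-1}$. Convexity of $f_{S_t}$ bounds these two inner products by $f_{S_t}(x^t) - f_{S_t}(x^*)$ and $f_{S_t}(x^t) - f_{S_t}(x^{t-1})$ respectively, while Lemma \ref{lem:decsps-bound} gives $\h_t^2\|\nabla f_{S_t}(x^t)\|^2 \leq (\h_t/c_t)[f_{S_t}(x^t) - \ell_{S_t}^*]$.

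Next, as in the SPS$_{\max}$ proof, I would rewrite every $f_{S_t}(y) - f_{S_t}(x^*)$ as $[f_{S_t}(y) - \ell_{S_t}^*] - [f_{S_t}(x^*) - \ell_{S_t}^*]$ so that only the non-negative quantities $[f_{S_t}(\cdot) - \ell_{S_t}^*]$ appear, and then gather the three contributions involving $[f_{S_t}(x^t) - \ell_{S_t}^*]$ into a single coefficient $\h_t(2 + 2\l_t - 1/c_t)$. Since $c_t \geq 1$ this coefficient is at least $\h_t(1 + 2\l_t)$, and combining the lower bound $\h_t \geq \a/c_t$ from Lemma \ref{lem:decsps-bound} with non-negativity of $[f_{S_t}(x^t) - \ell_{S_t}^*]$ lets me pass to the deterministic coefficient $(\a/c_t)(1 + 2\l_t)$. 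Taking conditional expectation over $S_t$ then turns the remaining factors into $\s^2$ via $\E_{S_t}[f_{S_t}(x^*) - \ell_{S_t}^*] = \s^2$; the upper bound $c_t \h_t \leq c_0 \h_b$ (DecSPS monotonicity, again from Lemma \ref{lem:decsps-bound}) controls the data-dependent coefficient in front of the previous-iterate term.

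I would then multiply the resulting inequality through by $c_t$ and sum from $t = 0$ to $T - 1$. Two pieces of the sum require summation by parts. First, $\sum_t c_t(\|z^t - x^*\|^2 - \|z^{t+1} - x^*\|^2)$ telescopes, via Abel's trick and the monotonicity of $c_t$, to at most $c_{T-1} D_z^2$ after using $\|z^t - x^*\|^2 \leq D_z^2$. Second, the momentum contribution $\sum_t (\text{coeff})_t \E[f(x^{t-1}) - f(x^*)]$ is reindexed by the shift $t \mapsto t + 1$; its boundary term at $t = 0$ becomes the $2\l_1[f(x^0) - f(x^*)]$ piece (using $x^{-1} = x^0$), and the remaining running sum combines additively with the $\a(1 + 2\l_t)$ coefficient on $[f(x^t) - f(x^*)]$. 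The assumption that $\l_t$ is decreasing guarantees that the net coefficient in the running sum remains at least $\a$; dividing by $\a T$ and applying Jensen's inequality to $\bar{x}^T$ finishes the argument.

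The main obstacle will be the joint bookkeeping in this last step. Because $\h_t$ is data-dependent, the coefficients in the momentum and noise sums are themselves random, so one must align the monotonicities of $\l_t$ (decreasing) and $c_t \h_t$ (decreasing, via the DecSPS property in Lemma \ref{lem:decsps-bound}) so that (i) the net running coefficient on $[f(x^t) - f(x^*)]$ survives with value at least $\a$, and (ii) the noise sum collapses neatly to the $\s^2 \sum_{t=0}^{T-1} 1/c_t$ factor without leaving a residual $\h_b$ dependence. Pinning down the boundary constant as $2\l_1$ (rather than $\l_0$) depends on where exactly the index shift is performed relative to the $c_t\h_t$ upper bound; once that bookkeeping is settled, the remainder of the proof is routine and, via Proposition \ref{pro:ima-ss} and Corollary \ref{cor:trans-app} with $\l = \b/(1-\b)$, translates to the \ref{eq:shb} statement of Theorem \ref{thm:shb-dec-sps}.
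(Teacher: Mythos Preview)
Your plan has a genuine gap that is not just bookkeeping. By following the SPS$_{\max}$ template and applying convexity of $f_{S_t}$ while the random step-size $\h_t$ is still attached, you are forced to bound the coefficient on the current-iterate term from \emph{below} via $\h_t\geq\a/c_t$ and the coefficients on the previous-iterate and noise terms from \emph{above} via $\h_t\leq c_0\h_b/c_t$. After multiplying by $c_t$ and reindexing, the net running coefficient on $\E[f(x^t)-f(x^*)]$ is $\a(1+2\l_t)-2c_0\h_b\l_{t+1}$, and ``$\l_t$ decreasing'' does \emph{not} make this $\geq\a$: for the constant sequence $\l_t=\l$ (the case you actually need for \ref{eq:shb}) it requires $\a\geq c_0\h_b$, which fails whenever $1/(2L_{\max})<c_0\h_b$. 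Worse, the noise contribution per step after multiplying by $c_t$ is $[2c_0\h_b(1+\l_t)-\a(1+2\l_t)]\s^2=O(1)$, so the sum is $O(T)$ and the final bound retains a non-vanishing $\s^2$ term---you would recover only a neighborhood result, not the claimed $\frac{\s^2}{T}\sum_t 1/c_t$.

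The paper avoids this mismatch by \emph{not} mimicking the SPS$_{\max}$ proof. Instead it divides the basic expansion by $\h_t$ \emph{before} any convexity step, leaving the bare inner product $\langle\nabla f_{S_t}(x^t),z^t-x^*\rangle$ with no $\h_t$ coefficient. Summing and taking expectation then yields $\E\langle\nabla f(x^t),z^t-x^*\rangle$ (the step-size has been decoupled from the gradient), and convexity of the \emph{deterministic} $f$ is applied afterward, so the $x^t$ and $x^{t-1}$ terms receive matching coefficients $1$ and $\l_t$ with no $\a$--vs--$\h_b$ asymmetry. The only $\h_t$ that survives sits in $D_z^2/\h_{T-1}$ (handled by Abel summation and the lower bound $\h_{T-1}\geq\a/c_{T-1}$) and in $\h_t\|\nabla f_{S_t}(x^t)\|^2\leq\tfrac{1}{c_t}[f_{S_t}(x^t)-\ell_{S_t}^*]$, whose $1/c_t$ factor is precisely what produces the vanishing noise term. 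The $2\l_1$ boundary constant then falls out because the $t=0$ term uses $z^0=x^0$ directly (no momentum piece), so the reindexed momentum sum starts at $\l_1$.
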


\begin{proof}
    We have 
    \begin{align*}
        \|z^{t+1}-x^*\|^2\overset{(\ref{eq:ima-1})}{=}\|z^t-x^*\|^2-2\h_t\langle\nabla f_{S_t}(x^t),z^t-x^*\rangle+\h_t^2\|\nabla f_{S_t}(x^t)\|^2.
    \end{align*}
    Rearranging and dividing by $\h_t>0$ we get 
    \begin{align*}
        2\langle\nabla f_{S_t}(x^t),z^t-x^*\rangle\leq\frac{\|z^t-x^*\|^2}{\h_t}-\frac{\|z^{t+1}-x^*\|^2}{\h_t}+\h_t\|\nabla f_{S_t}(x^t)\|^2.
    \end{align*}
    Summing the above for $t=0,\dots,T-1$ and telescoping we have 
    \begin{align*}
        &2\sum_{t=0}^{T-1}\langle\nabla f_{S_t}(x^t),z^t-x^*\rangle\\
        &\leq\sum_{t=0}^{T-1}\left[\frac{\|z^t-x^*\|^2}{\h_t}-\frac{\|z^{t+1}-x^*\|^2}{\h_t}+\h_t\|\nabla f_{S_t}(x^t)\|^2\right]\\
        &=\sum_{t=0}^{T-1}\frac{\|z^t-x^*\|^2}{\h_t}-\sum_{t=0}^{T-1}\frac{\|z^{t+1}-x^*\|^2}{\h_t}+\sum_{t=0}^{T-1}\h_t\|\nabla f_{S_t}(x^t)\|^2\\
        &=\frac{\|z^0-x^*\|^2}{\h_0}+\sum_{t=1}^{T-1}\frac{\|z^t-x^*\|^2}{\h_t}-\sum_{t=0}^{T-1}\frac{\|z^{t+1}-x^*\|^2}{\h_t}+\sum_{t=0}^{T-1}\h_t\|\nabla f_{S_t}(x^t)\|^2\\
        &=\frac{\|z^0-x^*\|^2}{\h_0}+\sum_{t=0}^{T-2}\frac{\|z^{t+1}-x^*\|^2}{\h_{t+1}}-\sum_{t=0}^{T-1}\frac{\|z^{t+1}-x^*\|^2}{\h_t}+\sum_{t=0}^{T-1}\h_t\|\nabla f_{S_t}(x^t)\|^2\\
        &=\frac{\|z^0-x^*\|^2}{\h_0}+\sum_{t=0}^{T-2}\frac{\|z^{t+1}-x^*\|^2}{\h_{t+1}}-\sum_{t=0}^{T-2}\frac{\|z^{t+1}-x^*\|^2}{\h_t}-\frac{\|z^T-x^*\|^2}{\h_{T-1}}+\sum_{t=0}^{T-1}\h_t\|\nabla f_{S_t}(x^t)\|^2\\
        &=\frac{\|z^0-x^*\|^2}{\h_0}+\sum_{t=0}^{T-2}\left(\frac{1}{\h_{t+1}}-
        \frac{1}{\h_t}\right)\|z^{t+1}-x^*\|^2-\frac{\|z^T-x^*\|^2}{\h_{T-1}}+
        \sum_{t=0}^{T-1}\h_t\|\nabla f_{S_t}(x^t)\|^2\\
        &\leq\left(\frac{1}{\h_0}+\sum_{t=0}^{T-2}\left[\frac{1}{\h_{t+1}}-
        \frac{1}{\h_t}\right]\right)D^2_z+\sum_{t=0}^{T-1}\h_t\|\nabla f_{S_t}(x^t)\|^2\\
        &=\frac{D^2_z}{\h_{T-1}}+\sum_{t=0}^{T-1}\h_t\|\nabla f_{S_t}(x^t)\|^2\numberthis\label{eq:main-bound}.
    \end{align*}
    Taking expectation in \cref{eq:main-bound} we have 
    \begin{align}
        \label{eq:thm:ima-dec-sps-pre}
        2\sum_{t=0}^{T-1}\E\left[\langle\nabla f(x^t),z^t-x^*\rangle\right]
        \leq\E\left[\frac{D^2_z}{\h_{T-1}}+\sum_{t=0}^{T-1}\h_t\|\nabla f_{S_t}(x^t)\|^2\right].
    \end{align}
    Now the LHS of \cref{eq:thm:ima-dec-sps-pre} can be bounded as follows 
    \begin{align*}
        2\sum_{t=0}^{T-1}&\E\left[\langle\nabla f(x^t),z^t-x^*\rangle\right]\\
        \overset{(z^0=x^0)}&{=}2\E\left[\langle\nabla f(x^0),x^0-x^*\rangle\right]+2\sum_{t=1}^{T-1}\E\left[\langle\nabla f(x^t),z^t-x^*\rangle\right]\\
        \overset{(\ref{eq:z-def})}&{=}2\E\left[\langle\nabla f(x^0),x^0-x^*\rangle\right]+\sum_{t=1}^{T-1}\E\left[2\langle\nabla f(x^t),x^t-x^*\rangle+2\l_t\langle\nabla f(x^t),x^t-x^{t-1}\rangle\right]\\
        \overset{\text{convexity}}&{\geq}2\E\left[f(x^0)-f(x^*)\right]+\sum_{t=1}^{T-1}\E\left[2(f(x^t)-f(x^*))+2\l_t(f(x^t)-f(x^{t-1}))\right]\\
        &=2\E\left[f(x^0)-f(x^*)\right]+\sum_{t=1}^{T-1}\left[2(1+\l_t)\E[f(x^t)-f(x^*)]-2\l_t\E[f(x^{t-1})-f(x^*)]\right]\\
        &=2\E\left[f(x^0)-f(x^*)\right]+\sum_{t=1}^{T-1}2(1+\l_t)\E[f(x^t)-f(x^*)]-\sum_{t=1}^{T-1}2\l_t\E[f(x^{t-1})-f(x^*)]\\
        &=2\E\left[f(x^0)-f(x^*)\right]+\sum_{t=1}^{T-1}2(1+\l_t)\E[f(x^t)-f(x^*)]-\sum_{t=0}^{T-2}2\l_{t+1}\E[f(x^t)-f(x^*)]\\
        &=2\E\left[f(x^0)-f(x^*)\right]+2\sum_{t=1}^{T-1}\E[f(x^t)-f(x^*)]+\sum_{t=1}^{T-1}2\l_t\E[f(x^t)-f(x^*)]\\
        &\q-\sum_{t=0}^{T-2}2\l_{t+1}\E[f(x^t)-f(x^*)]\\
        &=2\E\left[f(x^0)-f(x^*)\right]+2\sum_{t=1}^{T-1}\E[f(x^t)-f(x^*)]+2\l_{T-1}\E[f(x^{T-1})-f(x^*)]\\
        &\q+\sum_{t=1}^{T-2}2\l_t\E[f(x^t)-f(x^*)]-\sum_{t=1}^{T-2}2\l_{t+1}\E[f(x^t)-f(x^*)]-2\l_1\E[f(x^0)-f(x^*)]\\
        &=2\sum_{t=0}^{T-1}\E[f(x^t)-f(x^*)]+2\l_{T-1}\E[f(x^{T-1})-f(x^*)]\\
        &\q+2\sum_{t=1}^{T-2}(\l_t-\l_{t+1})\E[f(x^t)-f(x^*)]-2\l_1\E[f(x^0)-f(x^*)]\\
        &\geq2\sum_{t=0}^{T-1}\E[f(x^t)-f(x^*)]-2\l_1[f(x^0)-f(x^*)]\numberthis\label{eq:thm:ima-dec-sps-bound},
    \end{align*}
    using the fact that $(\l_t)$ is decreasing and the fact that $\E[f(x^t)-f(x^*)]\geq0$. Combining \cref{eq:thm:ima-dec-sps-pre} and \cref{eq:thm:ima-dec-sps-bound} we have 
    \begin{align}
        \label{eq:thm:ima-dec-sps-important}
        2\sum_{t=0}^{T-1}\E[f(x^t)-f(x^*)]-2\l_1[f(x^0)-f(x^*)]
        \leq\E\left[\frac{D^2_z}{\h_{T-1}}+\sum_{t=0}^{T-1}\E\h_t\|\nabla f_{S_t}(x^t)\|^2\right].
    \end{align}
    Now for the RHS of \cref{eq:thm:ima-dec-sps-important} we have 
    \begin{align*}
        \frac{D^2_z}{\h_{T-1}}+\sum_{t=0}^{T-1}\h_t\|\nabla f_{S_t}(x^t)\|^2
        &\overset{(\ref{eq:lem:decsps-bound})}{\leq}\frac{D^2_z}{\h_{T-1}}+\sum_{t=0}^{T-1}\frac{1}{c_t}[f_{S_t}(x^t)-\ell_{S_t}^*]\\
        &=\frac{D^2_z}{\h_{T-1}}+\sum_{t=0}^{T-1}\frac{1}{c_t}[f_{S_t}(x^t)-f_{S_t}(x^*)]+\sum_{t=0}^{T-1}\frac{1}{c_t}[f_{S_t}(x^*)-\ell_{S_t}^*]\\
        &\leq\frac{c_{T-1}D^2_z}{\a}+\sum_{t=0}^{T-1}\frac{1}{c_t}[f_{S_t}(x^t)-f_{S_t}(x^*)]+\sum_{t=0}^{T-1}\frac{1}{c_t}[f_{S_t}(x^*)-\ell_{S_t}^*],\numberthis\label{eq:thm:ima-dec-sps-tech-bound}
    \end{align*}
    where the last inequality follows from the fact that $\h_t$ is decreasing and $\h_t\geq\frac{\a}{c_t}$ with $\a=\min\{\frac{1}{2L_{\max}},\h_bc_0\}$, from \Cref{lem:decsps-bound}. Thus, combining \cref{eq:thm:ima-dec-sps-important} and \cref{eq:thm:ima-dec-sps-tech-bound} we have  
    \begin{align}
        \label{eq:thm:ima-dec-sps-spec-pre}
        2\sum_{t=0}^{T-1}\E[f(x^t)-f(x^*)]-2\l_1[f(x^0)-f(x^*)]
        \leq\frac{c_{T-1}D^2_z}{\a}+\sum_{t=0}^{T-1}\frac{1}{c_t}\E[f(x^t)-f(x^*)]+\sum_{t=0}^{T-1}\frac{\s^2}{c_t}.
    \end{align}
    Rearranging \cref{eq:thm:ima-dec-sps-spec-pre} we get 
    \begin{align}
        \label{eq:thm:ima-dec-sps-spec}
        \sum_{t=0}^{T-1}\left(2-\frac{1}{c_t}\right)\E[f(x^t)-f(x^*)]\leq2\l_1
        [f(x^0)-f(x^*)]+\frac{c_{T-1}D^2_z}{\a}+\s^2\sum_{t=0}^{T-1}\frac{1}{c_t}
    \end{align}
    and using the fact that $2-\frac{1}{c_t}\geq1>0$ \cref{eq:thm:ima-dec-sps-spec} reduces to 
    \begin{align}
        \label{eq:thm:ima-dec-sps-red}
        \sum_{t=0}^{T-1}\E[f(x^t)-f(x^*)]
        \leq2\l_1[f(x^0)-f(x^*)]+\frac{c_{T-1}D^2_z}{\a}+\s^2\sum_{t=0}^{T-1}\frac{1}{c_t}
    \end{align}
    Dividing \cref{eq:thm:ima-dec-sps-red} by $T$ and using Jensen's inequality we get 
    \begin{align*}
        \E[f(\overline{x}^T)-f(x^*)]
        &\leq\frac{1}{T}\sum_{t=0}^{T-1}\E[f(x^t)-f(x^*)]\\
        &\leq\frac{2\l_1[f(x^0)-f(x^*)]}{T}+\frac{c_{T-1}D^2_z}{T\a}+\frac{\s^2}{T}\sum_{t=0}^{T-1}\frac{1}{c_t},
    \end{align*}
    as wanted. 
\end{proof}

\medskip
\noindent
If $\s^2>0$ like in the standard SGD analysis under decreasing step-sizes, the choice $c_t=\sqrt{t+1}$ leads to the asymptotic rate $O(1/\sqrt{t})$. 

\begin{corollary}
    \label{cor:ima-dec-sps}
    Under the setting of \Cref{thm:ima-dec-sps}, for $c_t=\sqrt{t+1}$ ($c_{-1}=c_0$) we have 
        \begin{align*}
        \E[f(\overline{x}^T)-f(x^*)]
        &\leq\frac{2\l_1[f(x^0)-f(x^*)]}{T}+\frac{D^2_z}{\a\sqrt{T}}+\frac{2\s^2}{\sqrt{T}}.
    \end{align*}
\end{corollary}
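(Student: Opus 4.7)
The plan is to simply substitute $c_t = \sqrt{t+1}$ (with $c_{-1}=c_0=1$) into the bound provided by \Cref{thm:ima-dec-sps} and simplify the two terms that depend on $c_t$, namely $\frac{c_{T-1}D_z^2}{T\alpha}$ and $\frac{\sigma^2}{T}\sum_{t=0}^{T-1}\frac{1}{c_t}$.

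First I would handle the deterministic term: with $c_{T-1}=\sqrt{T}$, one immediately gets $\frac{c_{T-1}D_z^2}{T\alpha} = \frac{\sqrt{T}\,D_z^2}{T\alpha} = \frac{D_z^2}{\alpha\sqrt{T}}$, matching the middle term in the corollary. Next I would bound the harmonic-type sum $\sum_{t=0}^{T-1}\frac{1}{\sqrt{t+1}}$ by a standard integral comparison: since $\frac{1}{\sqrt{x+1}}$ is decreasing,
\begin{equation*}
\sum_{t=0}^{T-1}\frac{1}{\sqrt{t+1}} \;\leq\; \int_0^{T}\frac{dx}{\sqrt{x}} \;=\; 2\sqrt{T}.
\end{equation*}
Dividing by $T$ gives $\frac{\sigma^2}{T}\sum_{t=0}^{T-1}\frac{1}{\sqrt{t+1}} \leq \frac{2\sigma^2}{\sqrt{T}}$, which is the last term of the corollary. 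The first term $\frac{2\lambda_1[f(x^0)-f(x^*)]}{T}$ is inherited unchanged from the theorem.

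Putting these three estimates together yields exactly the claimed bound. There is no real obstacle here: the whole content of the corollary is the observation that the specific schedule $c_t=\sqrt{t+1}$ balances the $c_{T-1}/T$ factor against the harmonic-type sum so that both contribute at the $1/\sqrt{T}$ scale, while the $2\lambda_1[f(x^0)-f(x^*)]/T$ initialization term decays faster. The only step that requires even a line of calculation is the integral comparison for $\sum 1/\sqrt{t+1}$, which is routine.
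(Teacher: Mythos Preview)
Your proposal is correct and essentially identical to the paper's proof: the paper simply invokes the well-known inequality $\sum_{t=0}^{T-1}\frac{1}{\sqrt{t+1}}\leq 2\sqrt{T}$, which you justify via the integral comparison, and the rest is substitution of $c_{T-1}=\sqrt{T}$ into the bound of \Cref{thm:ima-dec-sps}.
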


\begin{proof}
    It follows from the well known inequality 
    \begin{align*}
        \sum_{t=0}^{T-1}\frac{1}{\sqrt{t+1}}\leq2\sqrt{T}.
    \end{align*}
\end{proof}

\medskip
\noindent
Now the proof of \Cref{thm:shb-dec-sps} follows immediately from \Cref{cor:ima-dec-sps} setting $\l_t=\l=\frac{\b}{1-\b}$ (from \Cref{pro:ima-ss}). Moreover we have 
\begin{align*}
    \|z^t-x^*\|^2
    \overset{(\ref{eq:z-def})}&{=}\|(1+\l)x^t-\l x^{t-1}-x^*\|^2\\
    &\leq(1+\l)\|x^t-x^*\|^2+\l\|x^{t-1}-x^*\|^2\\
    &\leq(1+2\l)\max_t\|x^t-x^*\|^2,\numberthis\label{eq:dz-to-d}
\end{align*}
thus $D^2_z\leq(1+2\l)\max_t\|x^t-x^*\|^2=\frac{1+\b}{1-\b}D^2$, where $D^2=\max_{t\in[T]}\|x^t-x^*\|$.

\subsection{Proof of \texorpdfstring{\Cref{thm:shb-ada-sps}}{Theorem 3.6}}

Here we state and prove the corresponding theorem for AdaSPS in the \ref{eq:ima} setting. 

\begin{theorem}
    \label{thm:ima-ada-sps}
    Assume that each $f_i$ is convex and $L_i$-smooth. Then, the iterates of \ref{eq:ima} with 
    \begin{align*}
        \h_t=\min\left\{\frac{f_{S_t}(x^t)-\ell_{S_t}^*}{c\|\nabla f_{S_t}(x^t)\|^2}\frac{1}{\sqrt{\sum_{s=0}^tf_{S_s}(x^s)-\ell_{S_s}^*}},\h_{t-1}\right\}\text{ and }\l_t\geq0,
    \end{align*}
    where $\h_{-1}=+\infty$, $c>0$ and $(\l_t)$ a decreasing sequence, converge as 
    \begin{align*}
        \E[f(\overline{x}^T)-f(x^*)]\leq\frac{\t^2}{T}+\frac{\t\s}{\sqrt{T}},
    \end{align*}
    where $\overline{x}^T=\frac{1}{T}\sum_{t=0}^{T-1}x^t$, $\t=\left(\l_1\sqrt{f(x^0)-f(x^*)}+\frac{cLD^2_z}{2}+\frac{1}{2c}\right)$ and $D^2_z=\max_t\|z^t-x^*\|$. 
\end{theorem}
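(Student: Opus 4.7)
The plan is to prove the corresponding IMA result and then transfer it back to \ref{eq:shb} via \Cref{pro:ima-ss}, exactly as was done for the two previous theorems. Structurally, the argument should mirror the proof of \Cref{thm:ima-dec-sps} (DecSPS in the IMA viewpoint), differing only in how the right-hand side is controlled by the AdaSPS bounds.

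Concretely, I would proceed as follows. First, starting from the IMA recursion $z^{t+1}=z^t-\eta_t\nabla f_{S_t}(x^t)$, expand $\|z^{t+1}-x^*\|^2$, divide through by $\eta_t>0$ (which is legal since AdaSPS is positive), and sum–telescope. Because $(\eta_t)$ is nonincreasing by construction and $\|z^t-x^*\|^2\le D_z^2$, the telescoping collapses to $D_z^2/\eta_{T-1}$, giving
\begin{equation*}
2\sum_{t=0}^{T-1}\langle\nabla f_{S_t}(x^t),z^t-x^*\rangle\le\frac{D_z^2}{\eta_{T-1}}+\sum_{t=0}^{T-1}\eta_t\|\nabla f_{S_t}(x^t)\|^2.
\end{equation*}
Second, I would plug in the AdaSPS bounds of \Cref{lem:adasps-bounds}: the lower bound $\eta_{T-1}\ge\bigl(2cL_{\max}\sqrt{\sum_s e_s}\bigr)^{-1}$ with $e_s=f_{S_s}(x^s)-\ell_{S_s}^*$ gives $D_z^2/\eta_{T-1}\le 2cL_{\max}D_z^2\sqrt{\sum_s e_s}$, while the upper bound on $\eta_t\|\nabla f_{S_t}(x^t)\|^2$ together with \Cref{lem:adasps-trick} yields $\sum\eta_t\|\nabla f_{S_t}(x^t)\|^2\le\tfrac{2}{c}\sqrt{\sum_s e_s}$. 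Third, I would bound the LHS from below using $z^t-x^*=(x^t-x^*)+\lambda_t(x^t-x^{t-1})$ and convexity, then perform the same Abel-summation reindexing as in the DecSPS proof; since $(\lambda_t)$ is decreasing, the dropped residuals are nonnegative and one is left with $\sum\E[f(x^t)-f(x^*)]-\lambda_1[f(x^0)-f(x^*)]$. Taking expectations and applying Jensen's inequality to pass $\E$ inside the square root (using $\E[e_t]=\E[f(x^t)-f(x^*)]+\sigma^2$) should produce the scalar inequality
\begin{equation*}
V_T\le\lambda_1[f(x^0)-f(x^*)]+\Bigl(cL_{\max}D_z^2+\tfrac{1}{c}\Bigr)\sqrt{V_T+T\sigma^2},
\end{equation*}
where $V_T:=\sum_{t=0}^{T-1}\E[f(x^t)-f(x^*)]$. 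From here, completing the square in $\sqrt{V_T+T\sigma^2}$ (or invoking \Cref{lem:adasps-finish}), dividing by $T$, and applying Jensen on $f$ should deliver the claimed rate, with the no-momentum case ($\lambda_1=0$) automatically reproducing the AdaSPS result of \citet{jiang2023adaptive}.

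The hard part will be the final algebraic step. The momentum contribution enters as the \emph{linear} quantity $\lambda_1[f(x^0)-f(x^*)]$, whereas the target $\tau$ contains $\lambda_1\sqrt{f(x^0)-f(x^*)}$; a naive $\sqrt{a+b}\le\sqrt{a}+\sqrt{b}$ would only produce the weaker $\sqrt{\lambda_1}\sqrt{f(x^0)-f(x^*)}$, which is worse than the stated $\tau$ when $\lambda_1<1$. Recovering the exact form of $\tau$ will likely require either (i) keeping the nonnegative telescoping residuals discarded in Step~3 and using them to absorb part of the $\lambda_1[f(x^0)-f(x^*)]$ deficit, or (ii) a carefully tuned completion-of-square that directly produces the factorization $\sqrt{V_T+T\sigma^2}\le\tau+\sigma\sqrt{T}$ and then reads off $V_T\le\tau^2+2\tau\sigma\sqrt{T}$. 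Tracking constants tightly through this last manipulation is what the proof must get right.
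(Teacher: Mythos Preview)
Your plan matches the paper's proof exactly: reuse the DecSPS inequality \cref{eq:thm:ima-dec-sps-important}, bound the right-hand side via \Cref{lem:adasps-bounds} and \Cref{lem:adasps-trick}, push the expectation under the root by Jensen, and finish with \Cref{lem:adasps-finish}.

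The one step you flagged as ``hard'' has a one-line resolution, and neither of your speculated routes (keeping telescoping residuals, or a tuned completion of the square) is what the paper does. Since $f(x^0)-f(x^*)$ is the $t=0$ summand of $V_T$ and all summands are nonnegative, $\sqrt{f(x^0)-f(x^*)}\le\sqrt{V_T+T\sigma^2}$. Hence
\[
\lambda_1\bigl[f(x^0)-f(x^*)\bigr]
=\lambda_1\sqrt{f(x^0)-f(x^*)}\cdot\sqrt{f(x^0)-f(x^*)}
\le\lambda_1\sqrt{f(x^0)-f(x^*)}\cdot\sqrt{V_T+T\sigma^2},
\]
which merges the momentum contribution into the coefficient of $\sqrt{V_T+T\sigma^2}$ and gives exactly $V_T\le\tau\sqrt{V_T+T\sigma^2}$. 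Squaring and invoking \Cref{lem:adasps-finish} yields $V_T\le\tau^2+\tau\sigma\sqrt{T}$, and Jensen on $f$ closes the argument.
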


\begin{proof}
    Using \Cref{eq:thm:ima-dec-sps-important} from the proof of \Cref{thm:shb-dec-sps}, we have 
    \begin{align}
        \label{eq:thm:ima-ada-sps-important}
        2\sum_{t=0}^{T-1}\E[f(x^t)-f(x^*)]-2\l_1[f(x^0)-f(x^*)]
        \leq\E\left[\frac{D^2_z}{\h_{T-1}}+\sum_{t=0}^{T-1}\E\h_t\|\nabla f_{S_t}(x^t)\|^2\right].
    \end{align}
    By \Cref{lem:adasps-bounds} we have 
    \begin{align}
        \label{eq:thm:ima-ada-sps-lem1}
        \frac{D^2_z}{\h_{T-1}}
        \leq cLD^2_z\sqrt{\sum_{s=0}^{T-1}f_{S_s}(x^s)-\ell_{S_s}^*},
    \end{align}
    and by \Cref{lem:adasps-trick} we have 
    \begin{align}
        \label{eq:thm:ima-ada-sps-lem2}
        \sum_{t=0}^{T-1}\h_t\|\nabla f_{S_t}(x^t)\|^2
        \leq\sum_{t=0}^{T-1}\frac{f_{S_t}(x^t)-\ell_{S_t}^*}{2c\sqrt{\sum_{s=0}^tf_{S_s}(x^s)-\ell_{S_s}^*}}
        \leq\frac{1}{c}\sqrt{\sum_{s=0}^{T-1}f_{S_s}(x^s)-\ell_{S_s}^*}.
    \end{align}
    Now combining \Cref{eq:thm:ima-ada-sps-important,eq:thm:ima-ada-sps-lem1,eq:thm:ima-ada-sps-lem2} and using Jensen's inequality we get 
    \begin{align*}
        &2\sum_{t=0}^{T-1}\E[f(x^t)-f(x^*)]-2\l_1[f(x^0)-f(x^*)]\\
        &\leq\E\left[cLD^2_z\sqrt{\sum_{s=0}^{T-1}f_{S_s}(x^s)-\ell_{S_s}^*}+\frac{1}{c}\sqrt{\sum_{s=0}^{T-1}f_{S_s}(x^s)-\ell_{S_s}^*}\right]\\
        &=\left(cLD^2_z+\frac{1}{c}\right)\E\left[\sqrt{\sum_{s=0}^{T-1}f_{S_s}(x^s)-\ell_{S_s}^*}\right]\\
        &=\left(cLD^2_z+\frac{1}{c}\right)\E\left[\sqrt{\sum_{s=0}^{T-1}(f_{S_s}(x^s)-f_{S_s}(x^*)+f_{S_s}(x^*)-\ell_{S_s}^*)}\right]\\
        &\leq\left(cLD^2_z+\frac{1}{c}\right)\sqrt{\sum_{s=0}^{T-1}\E[f(x^s)-f(x^*)]+\s^2}.\numberthis\label{eq:thm:ima-ada-sps-main}
    \end{align*}
    Rearranging \cref{eq:thm:ima-ada-sps-main} we have 
    \begin{align}
        \label{eq:thm:ima-ada-sps-main2}
        2\sum_{t=0}^{T-1}\E[f(x^t)-f(x^*)]\leq2\l_1[f(x^0)-f(x^*)]+\left(cLD^2_z+\frac{1}{c}\right)\sqrt{\sum_{s=0}^{T-1}\E[f(x^s)-f(x^*)]+\s^2}.
    \end{align}
    Now let us choose $c_q=\sqrt{f(x^0)-f(x^*)}$. Then 
    \begin{align*}
        2\l_1[f(x^0)-f(x^*)]
        &=2\l_1c_q\sqrt{f(x^0)-f(x^*)}\\
        &\leq2\l_1c_q\sqrt{\sum_{s=0}^{T-1}\E[f(x^s)-f(x^*)]+\s^2},\numberthis\label{eq:thm:ima-ada-sps-l-b}
    \end{align*}
    Combining \cref{eq:thm:ima-ada-sps-main2} and \cref{eq:thm:ima-ada-sps-l-b} we get 
    \begin{align}
        \label{eq:thm:ima-ada-sps-inter}
        \sum_{t=0}^{T-1}\E[f(x^t)-f(x^*)]
        \leq\left(\l_1c_q+\frac{cLD^2_z}{2}+\frac{1}{2c}\right)\sqrt{\sum_{s=0}^{T-1}\E[f(x^s)-f(x^*)]+\s^2}.
    \end{align}
    Squaring both sides of \cref{eq:thm:ima-ada-sps-inter} we have 
    \begin{align}
        \label{eq:thm:ima-ada-sps-inter2}
        \left(\sum_{t=0}^{T-1}\E[f(x^t)-f(x^*)]\right)^2
        \leq\t^2\left(\sum_{t=0}
        ^{T-1}\E[f(x^t)-f(x^*)]+T\s^2\right),
    \end{align}
    where $\t=\left(\l_1c_q+\frac{cLD^2_z}{2}+\frac{1}{2c}\right)$. Now we use \Cref{lem:adasps-finish} in \cref{eq:thm:ima-ada-sps-inter} to get 
    \begin{align}
        \label{eq:thm:ima-ada-sps-inter3}
        \sum_{t=0}^{T-1}\E[f(x^t)-f(x^*)]\leq\t^2+\t\s\sqrt{T}.
    \end{align}
    Finally by Jensen's inequality and \cref{eq:thm:ima-ada-sps-inter3} we have 
    \begin{align*}
        \E[f(\overline{x}^T)-f(x^*)]\leq\frac{\t^2}{T}+\frac{\t\s}{\sqrt{T}},
    \end{align*}
    as wanted. 
\end{proof}

\medskip
\noindent
Now the proof of \Cref{thm:shb-ada-sps} follows immediately from \Cref{thm:ima-ada-sps} setting $\l=\frac{\b}{1-\b}$ (from \Cref{pro:ima-ss}). Moreover, using \cref{eq:dz-to-d} we have $D^2_z\leq\frac{1+\b}{1-\b}D^2$. 

\newpage
\section{Beyond the bounded iterates assumption}

The two main theorems on the convergence of MomDecSPS/MomAdaSPS require the bounded iterates assumption. As mentioned in the main paper, this is a standard assumption for several adaptive step-sizes, see: \citep{reddi2019convergence,ward2020adagrad,orvieto2022dynamics,jiang2023adaptive}. However, one can artificially remove this assumption by adding a projection step in the update rule of IMA onto a compact and convex subset $\mathcal{D}\subseteq\R^d$ as described in this section. 

Consider the \textit{constrained} finite-sum optimization problem,
\begin{equation}
    \label{eq:main-problem-constr}
    \min_{x\in\mathcal{D}}\left[f(x)=\frac{1}{n}\sum_{i=1}^nf_i(x)\right],
\end{equation}
where each $f_i:\R^d\to\R$ is convex, smooth, and lower bounded by $\ell_i^*$ and $\mathcal{D}\subseteq\R^d$ is a compact and convex subset. Let $X^*_{\mathcal{D}}\subseteq\mathcal{D}$ be the set of minimizers of (\ref{eq:main-problem-constr}). We assume that $X^*_{\mathcal{D}}\neq\emptyset$ and we fix $x^*\in X^*_{\mathcal{D}}$. 

The new update rule of IMA, takes the following form (Projected IMA): 
\begin{align}
    &z^{t+1}=\tn{proj}_{\mathcal{D}}\left[z^t-\h_t\nabla f_{S_t}(x^t)\right]\\
    &x^{t+1}=\frac{\l_{t+1}}{\l_{t+1}+1}x^t+\frac{1}{\l_{t+1}+1}z^{t+1},
\end{align}
with $x^0=z^0\in\mathcal{D}$, where $\tn{proj}_{\mathcal{D}}(x)\in\arg\min_{d\in\mathcal{D}}\|d-x\|^2$. Note that the projection step is only needed for the update of $z^{t+1}$ because $x^{t+1}$ is already a convex combination of elements in the convex set $\mathcal{D}$, namely $z^{t+1}$ and $x^t$ by induction. We highlight the fact that Projected IMA is not necessarily equivalent to Projected SHB. Now the proofs of \Cref{thm:ima-dec-sps,thm:ima-ada-sps} will go through using the non-expansiveness of the projection operator, as explained in the following lemma: 

\begin{lemma}[Non-expansiveness]
    \label{lem:non-exp}
    For all $x,y\in\R^d$ it holds 
    \begin{align*}
        \|\tn{proj}_{\mathcal{D}}(x)-\tn{proj}_{\mathcal{D}}(y)\|\leq\|x-y\|
    \end{align*}
\end{lemma}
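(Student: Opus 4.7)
The plan is to prove this classical non-expansiveness property using the variational (first-order optimality) characterization of the Euclidean projection onto a closed convex set. First, I would invoke the following characterization: for a nonempty closed convex set $\mathcal{D}\subseteq\R^d$ and any $x\in\R^d$, the point $p_x:=\tn{proj}_{\mathcal{D}}(x)$ exists, is unique (by strong convexity and coercivity of $d\mapsto\|d-x\|^2$), and is characterized by
\begin{equation*}
p_x\in\mathcal{D}\q\text{and}\q\langle x-p_x,\,d-p_x\rangle\leq 0\text{ for all }d\in\mathcal{D}.
\end{equation*}
This standard inequality follows from differentiating $\|d-x\|^2$ along any segment from $p_x$ to a point $d\in\mathcal{D}$ and using that $p_x$ is a minimizer.

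Next, I would set $p_x=\tn{proj}_{\mathcal{D}}(x)$ and $p_y=\tn{proj}_{\mathcal{D}}(y)$ and apply the characterization twice: using $d=p_y\in\mathcal{D}$ in the inequality associated with $x$, and $d=p_x\in\mathcal{D}$ in the inequality associated with $y$, to obtain $\langle x-p_x,\,p_y-p_x\rangle\leq 0$ and $\langle y-p_y,\,p_x-p_y\rangle\leq 0$. Adding these two inequalities and rearranging gives
\begin{equation*}
\|p_x-p_y\|^2\leq\langle x-y,\,p_x-p_y\rangle.
\end{equation*}

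Finally, I would apply Cauchy--Schwarz on the right-hand side to get $\|p_x-p_y\|^2\leq\|x-y\|\,\|p_x-p_y\|$. If $p_x=p_y$ the claim is trivial; otherwise dividing through by $\|p_x-p_y\|>0$ yields $\|\tn{proj}_{\mathcal{D}}(x)-\tn{proj}_{\mathcal{D}}(y)\|\leq\|x-y\|$, as desired. There is no real obstacle here: the lemma is a textbook result, and the only slightly non-trivial ingredient is the variational characterization above, which is standard convex-analysis and can simply be cited.
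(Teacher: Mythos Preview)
Your proof is correct and self-contained: you use the standard variational characterization of the Euclidean projection, combine the two obtuse-angle inequalities to obtain firm non-expansiveness $\|p_x-p_y\|^2\leq\langle x-y,\,p_x-p_y\rangle$, and then apply Cauchy--Schwarz. The paper, by contrast, does not give an argument at all---it simply cites Example~8.14 and Lemma~8.16 of \citet{garrigos2023handbook}. So your route is strictly more informative than the paper's, and in fact it is precisely the textbook argument that such a citation is pointing to. There is nothing to correct.
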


\begin{proof}
    See Example 8.14 and Lemma 8.16 in \citet{garrigos2023handbook}. 
\end{proof}

Now we have: 

\begin{theorem}[Pojected IMA version of \Cref{thm:ima-dec-sps}]
    Assume that each $f_i$ is convex and $L_i$-smooth. Then, the iterates of Projected IMA with 
    \begin{align*}
        \h_t=\frac{1}{c_t}\min\left\{\frac{f_{S_t}(x^t)-\ell_{S_t}^*}{\|\nabla f_{S_t}(x^t)\|^2},
        c_{t-1}\h_{t-1}\right\}\text{ and }\l_t\geq0,
    \end{align*}
    where $(c_t)$ is an increasing sequences with $c_t\geq1$, $c_{-1}=c_0$, $\h_{-1}=\h_b$ and $(\l_t)$ is a decreasing sequence, converge as  
    \begin{align*}
        \E[f(\overline{x}^T)-f(x^*)]\leq\frac{2\l_1[f(x^0)-f(x^*)]}{T}+
        \frac{c_{T-1}D^2}{T\a}+\frac{\s^2}{T}\sum_{t=0}^{T-1}\frac{1}{c_t},
    \end{align*}
    where $\overline{x}^T=\frac{1}{T}\sum_{t=0}^{T-1}x^t$, $\a=\min\left\{\frac{1}{2L},\h_bc_0\right\}$ and $D=\tn{diam }\mathcal{D}$. 
\end{theorem}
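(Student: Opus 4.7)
My plan is to transplant the proof of \Cref{thm:ima-dec-sps} almost verbatim, with exactly one surgical modification at the very first inequality, where the non-expansiveness of the projection (\Cref{lem:non-exp}) replaces the simple expansion of the square. The key observation is that since $x^{*}\in X^{*}_{\mathcal{D}}\subseteq\mathcal{D}$, we have $\mathrm{proj}_{\mathcal{D}}(x^{*})=x^{*}$, so
\begin{equation*}
\|z^{t+1}-x^{*}\|^{2}
= \|\mathrm{proj}_{\mathcal{D}}[z^{t}-\eta_{t}\nabla f_{S_{t}}(x^{t})] - \mathrm{proj}_{\mathcal{D}}(x^{*})\|^{2}
\leq \|z^{t}-\eta_{t}\nabla f_{S_{t}}(x^{t})-x^{*}\|^{2}.
\end{equation*}
Expanding the right-hand side produces exactly the inequality that the proof of \Cref{thm:ima-dec-sps} used as its starting point (an equality there, an inequality here, but always in the favorable direction). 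From this point on, rearranging by dividing by $\eta_{t}>0$, summing over $t=0,\dots,T-1$, and telescoping using the monotonicity $\eta_{t+1}\leq\eta_{t}$ (from \Cref{lem:decsps-bound}) delivers the same master bound controlling $2\sum_{t}\langle\nabla f_{S_{t}}(x^{t}),z^{t}-x^{*}\rangle$ in terms of $\max_{t}\|z^{t}-x^{*}\|^{2}/\eta_{T-1}$ and $\sum_{t}\eta_{t}\|\nabla f_{S_{t}}(x^{t})\|^{2}$.

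The only substantive change is in bounding $\max_{t}\|z^{t}-x^{*}\|^{2}$: rather than being posited externally as a bounded-iterates assumption, it is now automatic. By induction, $z^{t}\in\mathcal{D}$ for all $t$ (the projection enforces this at each $z$-update, and $x^{t+1}$ stays in $\mathcal{D}$ as a convex combination of $x^{t}$ and $z^{t+1}$, both in the convex set $\mathcal{D}$), so $\|z^{t}-x^{*}\|\leq\mathrm{diam}\,\mathcal{D}=D$ uniformly in $t$. Substituting this into the master bound gives the $c_{T-1}D^{2}/\alpha$ term in place of $c_{T-1}D^{2}_{z}/\alpha$. Every subsequent step — taking expectations, lower-bounding the LHS via convexity plus the Abel-summation trick to pick up $2\sum_{t=0}^{T-1}\E[f(x^{t})-f(x^{*})]-2\lambda_{1}[f(x^{0})-f(x^{*})]$, upper-bounding $\eta_{t}\|\nabla f_{S_{t}}(x^{t})\|^{2}$ via \Cref{lem:decsps-bound}, splitting $f_{S_{t}}(x^{t})-\ell_{S_{t}}^{*}$ into the convex gap and the $\sigma^{2}$-term, absorbing the $1/c_{t}\le 1$ factor on the LHS, and concluding with Jensen's inequality on the Cesaro average — is character-for-character identical to the proof of \Cref{thm:ima-dec-sps}.

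The expected main obstacle is essentially nonexistent: the projection is inserted at the one place in the proof where a bounded-iterates hypothesis would otherwise be invoked, and it converts an equality into an inequality in the benign direction. The one small check I would perform is that the step-size bounds of \Cref{lem:decsps-bound} — most importantly $\eta_{t}\|\nabla f_{S_{t}}(x^{t})\|^{2}\leq(\eta_{t}/c_{t})[f_{S_{t}}(x^{t})-\ell_{S_{t}}^{*}]$ and $\eta_{t}\geq\alpha/c_{t}$ — still apply unchanged, which they do, since those bounds depend only on smoothness and convexity of the individual $f_{i}$ and on the defining recursion of $\eta_{t}$, not on any property of the $z^{t}$ iterates. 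Analogously, the convexity lower bound on the LHS only uses convexity of $f$ together with the algebraic identity $z^{t}=x^{t}+\lambda_{t}(x^{t}-x^{t-1})$ (which is unaffected by whether a projection was applied in the previous step, because $z^{t+1}$ is defined by the projected update but $z^{t}$ in that identity refers to the pre-existing iterate). Thus the conclusion of \Cref{thm:ima-dec-sps} carries over with $D_{z}$ replaced by $D=\mathrm{diam}\,\mathcal{D}$, which is exactly the stated bound.
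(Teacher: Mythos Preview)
Your proposal is correct and matches the paper's approach exactly: the paper's proof consists of precisely the single non-expansiveness step you describe (using $x^{*}=\mathrm{proj}_{\mathcal{D}}(x^{*})$ and \Cref{lem:non-exp}), followed by the sentence ``Now we continue exactly like the rest of the proof of \Cref{thm:ima-dec-sps}.'' Your additional sanity checks (that \Cref{lem:decsps-bound} is unaffected, and that the identity $z^{t}=x^{t}+\lambda_{t}(x^{t}-x^{t-1})$ still holds because it derives from the unchanged $x$-update) are correct and go slightly beyond what the paper spells out.
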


\begin{proof}
    We have 
    \begin{align*}
        \|z^{t+1}-x^*\|^2
        &=\|\tn{proj}_{\mathcal{D}}(z^t-\h_t\nabla f_{S_t}(x^t))-\tn{proj}_{\mathcal{D}}(x^*)\|\\
        \overset{\text{Lem. \ref{lem:non-exp}}}&{\leq}\|z^t-\h_t\nabla f_{S_t}(x^t)-x^*\|\\
        &=\|z^t-x^*\|^2-2\h_t\langle\nabla f_{S_t}(x^t),z^t-x^*\rangle+\h_t^2\|\nabla f_{S_t}(x^t)\|^2.
    \end{align*}
    Now we continue exactly like the rest of the proof of \Cref{thm:ima-dec-sps}. 
\end{proof}

\begin{theorem}[Pojected IMA version of \Cref{thm:ima-ada-sps}]
    Assume that each $f_i$ is convex and $L_i$-smooth. Then, the iterates of Projected IMA with 
    \begin{align*}
        \h_t=\min\left\{\frac{f_{S_t}(x^t)-\ell_{S_t}^*}{c\|\nabla f_{S_t}(x^t)\|^2}\frac{1}{\sqrt{\sum_{s=0}^tf_{S_s}(x^s)-\ell_{S_s}^*}},\h_{t-1}\right\}\text{ and }\l_t\geq0,
    \end{align*}
    where $\h_{-1}=+\infty$, $c>0$ and $(\l_t)$ a decreasing sequence, converge as 
    \begin{align*}
        \E[f(\overline{x}^T)-f(x^*)]\leq\frac{\t^2}{T}+\frac{\t\s}{\sqrt{T}},
    \end{align*}
    where $\overline{x}^T=\frac{1}{T}\sum_{t=0}^{T-1}x^t$, $\t=\left(\l_1\sqrt{f(x^0)-f(x^*)}+\frac{cLD^2}{2}+\frac{1}{2c}\right)$ and $D=\tn{diam }\mathcal{D}$. 
\end{theorem}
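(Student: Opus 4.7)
The plan is to mirror the proof of \Cref{thm:ima-ada-sps} line by line, with two small structural changes: the projection is absorbed through non-expansiveness (\Cref{lem:non-exp}), and the compactness of $\mathcal{D}$ supplies the a priori bound $\|z^t-x^*\|^2\leq D^2=(\tn{diam}\,\mathcal{D})^2$ that removes the need for a separate bounded-iterates assumption. First I would expand
\begin{equation*}
\|z^{t+1}-x^*\|^2=\|\tn{proj}_{\mathcal{D}}(z^t-\h_t\nabla f_{S_t}(x^t))-\tn{proj}_{\mathcal{D}}(x^*)\|^2,
\end{equation*}
using $x^*\in\mathcal{D}$ so that $x^*=\tn{proj}_{\mathcal{D}}(x^*)$. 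By \Cref{lem:non-exp} this is upper bounded by $\|z^t-\h_t\nabla f_{S_t}(x^t)-x^*\|^2$, which upon expanding yields exactly the same one-step recursion that opens the proof of \Cref{thm:ima-ada-sps}. So from this point onward the argument is formally identical.

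Next I would reproduce the derivation of the analogue of \cref{eq:thm:ima-ada-sps-important}: divide by $\h_t$, sum over $t$, telescope using the monotonicity of $1/\h_t$, and bound every $\|z^{t+1}-x^*\|^2$ by $D^2$ (this is the \emph{only} place that iterate boundedness is invoked, and here it is automatic because $z^{t+1}\in\mathcal{D}$ by construction). Combining with the IMA identity $z^t=x^t+\l_t(x^t-x^{t-1})$, which survives the projection since it is obtained by solving the convex-combination step $x^{t+1}=\tfrac{\l_{t+1}}{\l_{t+1}+1}x^t+\tfrac{1}{\l_{t+1}+1}z^{t+1}$ for $z^{t+1}$, and invoking convexity of each $f_{S_t}$, I obtain
\begin{equation*}
2\sum_{t=0}^{T-1}\E[f(x^t)-f(x^*)]-2\l_1[f(x^0)-f(x^*)]\leq\E\!\left[\frac{D^2}{\h_{T-1}}+\sum_{t=0}^{T-1}\h_t\|\nabla f_{S_t}(x^t)\|^2\right],
\end{equation*}
which is \cref{eq:thm:ima-ada-sps-important} with $D_z^2$ replaced by $D^2$.

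The core bound then follows exactly as in the unprojected proof. \Cref{lem:adasps-bounds} gives $D^2/\h_{T-1}\leq cLD^2\sqrt{\sum_{s=0}^{T-1}f_{S_s}(x^s)-\ell_{S_s}^*}$, and \Cref{lem:adasps-trick} gives $\sum_t\h_t\|\nabla f_{S_t}(x^t)\|^2\leq\tfrac{1}{c}\sqrt{\sum_{s=0}^{T-1}f_{S_s}(x^s)-\ell_{S_s}^*}$. Splitting $f_{S_s}(x^s)-\ell_{S_s}^*=[f_{S_s}(x^s)-f_{S_s}(x^*)]+[f_{S_s}(x^*)-\ell_{S_s}^*]$, passing the expectation inside the square root by Jensen's inequality, and absorbing $2\l_1[f(x^0)-f(x^*)]$ through the same trick $c_q=\sqrt{f(x^0)-f(x^*)}$ used in the proof of \Cref{thm:ima-ada-sps}, I reach a self-bounding inequality $S_T^2\leq\t^2(S_T+T\s^2)$, where $S_T=\sum_{t=0}^{T-1}\E[f(x^t)-f(x^*)]$ and $\t=\l_1\sqrt{f(x^0)-f(x^*)}+cLD^2/2+1/(2c)$. \Cref{lem:adasps-finish} converts this into $S_T\leq\t^2+\t\s\sqrt{T}$, and a final division by $T$ together with Jensen's inequality on $\overline{x}^T$ yields the claimed rate $\t^2/T+\t\s/\sqrt{T}$.

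I do not foresee a genuine technical obstacle: the only subtlety is ensuring that the IMA identity for $z^{t+1}$ is read off from the convex-combination update rather than from the pre-projection raw update, since the two agree in the unprojected case but only the former remains valid here. Once this is observed, every subsequent inequality depends only on smoothness, convexity, and the three stepsize lemmas, all of which are untouched by the projection step.
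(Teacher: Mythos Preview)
Your proposal is correct and follows essentially the same approach as the paper: absorb the projection via non-expansiveness (\Cref{lem:non-exp}) to recover the one-step inequality, then run the proof of \Cref{thm:ima-ada-sps} verbatim with $D_z^2$ replaced by $D^2=(\tn{diam}\,\mathcal{D})^2$. Your observation that the identity $z^t=x^t+\l_t(x^t-x^{t-1})$ survives because it is read off from the (unmodified) convex-combination step is exactly the point that makes the argument go through, and the paper does not spell this out.
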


\begin{proof}
    Same as above. 
\end{proof}

\newpage
\section{Additional Experiments}
\label{sec:add-exps}
\newcommand{\wid}{0.27\textwidth}

\subsection{Extra Deterministic Experiment}
\label{sec:deterministic-app}

Here we have included an extra experiment in the deterministic setting, this one for logistic regression. We have performed a grid search to find the best $\b$ for \ref{eq:mopsmax} since no optimal choices for $\b$ are known for the general convex case. We use the same $\b$ for HB and ALR-MAG for direct comparison. For the step-size of HB we used $\g=2(1-\b)/L$ as recomended in \citet{ghadimi2015global}. The results are presented in \Cref{fig:det_lr}. In this problem we observe that our step-size (\ref{eq:mopsmax}) is the fastest, having similar performace with ALR-MAG and GD Polyak. For the deterministic setting we have used a 12 core AMD Ryzen 5 5600H CPU to run the experiments. 

\begin{figure}[H]
	\centering
    \begin{subfigure}{0.25\textwidth}
		\includegraphics[width=\textwidth]{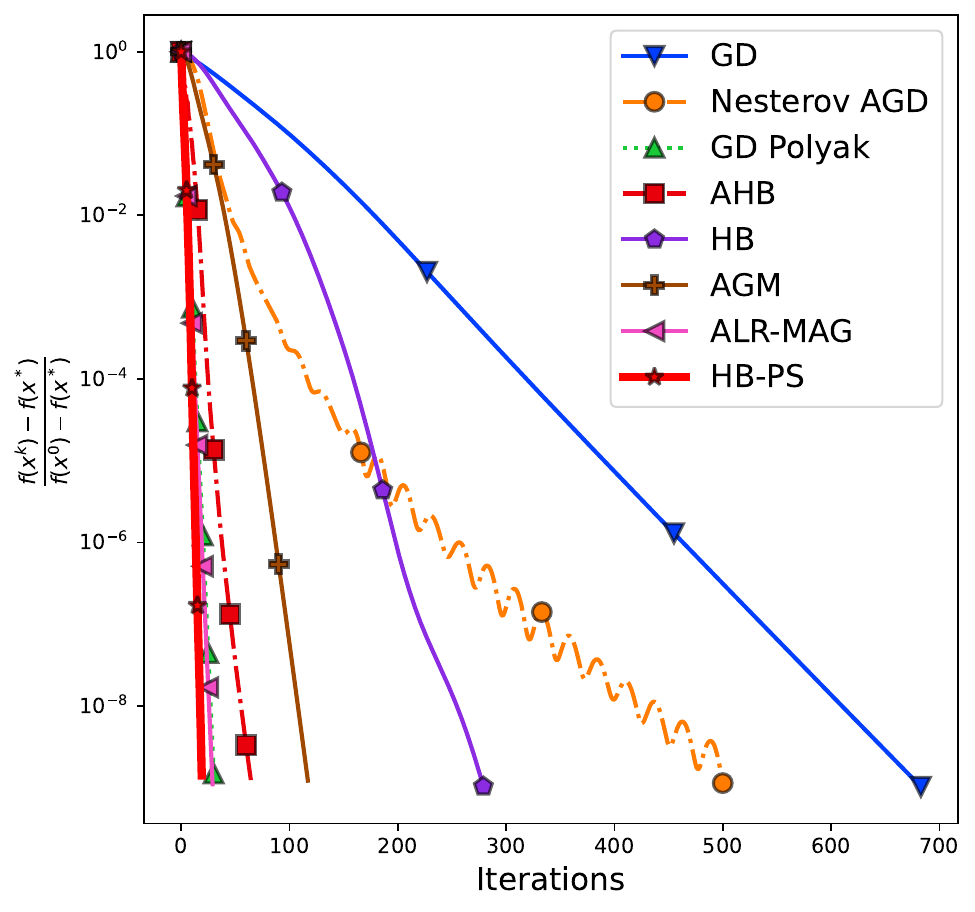}
		\caption{Relative Error Plot}
		\label{subfig:det_lr}
	\end{subfigure}
    ~
	\begin{subfigure}{0.25\textwidth}
		\includegraphics[width=\textwidth]{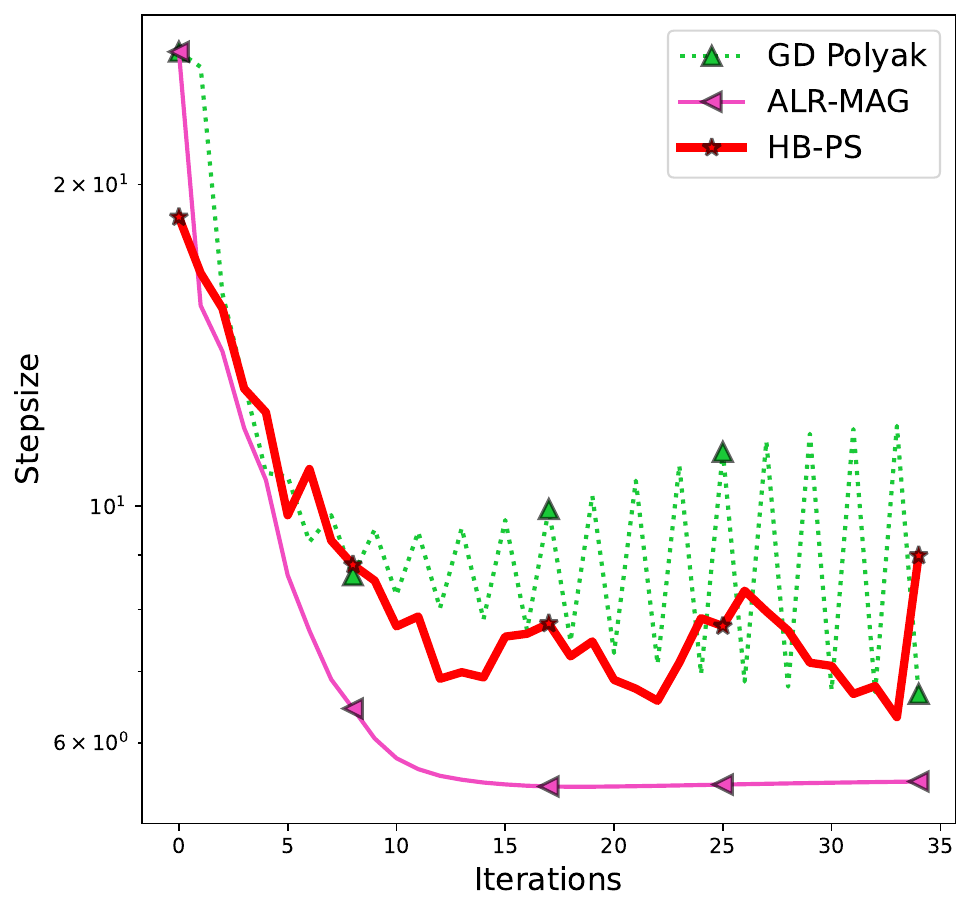}
		\caption{Step-size Plot}
		\label{subfig:det_lr_ss}
	\end{subfigure}

	\caption{Comparison of various deterministic algorithms for the logistic regression problem on synthetic data.}
	\label{fig:det_lr}
\end{figure}

\subsection{Stochastic Heavy Ball with Constant Step-size}
\label{sec:shb-const}

As we saw in \Cref{cor:shb-const}, when $\g_b\leq\frac{1}{2L_{\max}}$ then we have new theoretical guarantees for \ref{eq:shb} with the constant step-size $\g=\frac{1-\b}{2L_{\max}}$. The most recent analysis of constant \ref{eq:shb} is from \citet{liu2020improved} with step-size $\g=\frac{(1-\b)^2}{L}\min\left\{\frac{1}{4-\b+\b^2}, \frac{1}{2\sqrt{2\b+2\b^2}}\right\}$. As mentioned in the main text our step-size is larger when $\b\geq\sqrt{5}-2\approx0.236$. In \Cref{fig:shb_const} we provide a numerical comparison of these results for logistic regression with synthetic data. We observe that \ref{eq:shb} with the \citet{liu2020improved} step-size has faster convergence for the first iterations but it reaches a plateau much earlier. Moreover, note that as $\b\to1$ both step-sizes have similar performance. 

\begin{figure}[t]
	\centering
	\begin{subfigure}{0.2\textwidth}
		\includegraphics[width=\textwidth]{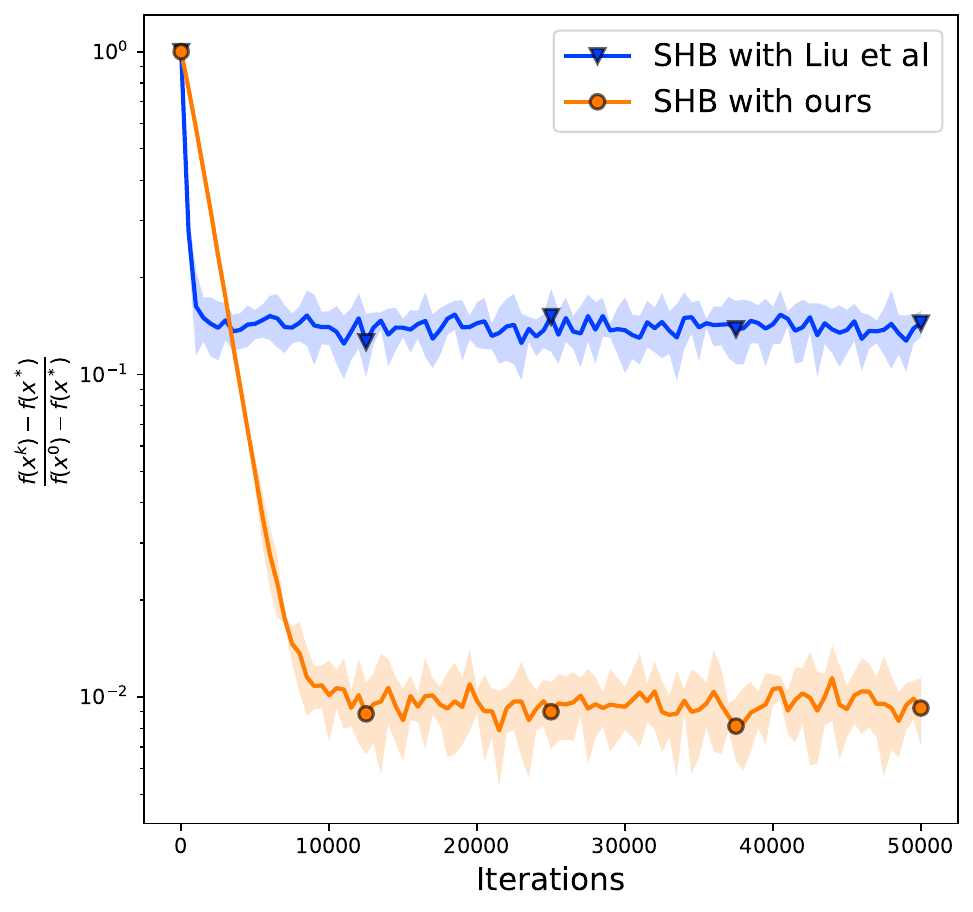}
		\caption{$\b=0.1$}
		\label{subfig:shb_const/b-0.1}
	\end{subfigure}
    ~
    \begin{subfigure}{0.2\textwidth}
		\includegraphics[width=\textwidth]{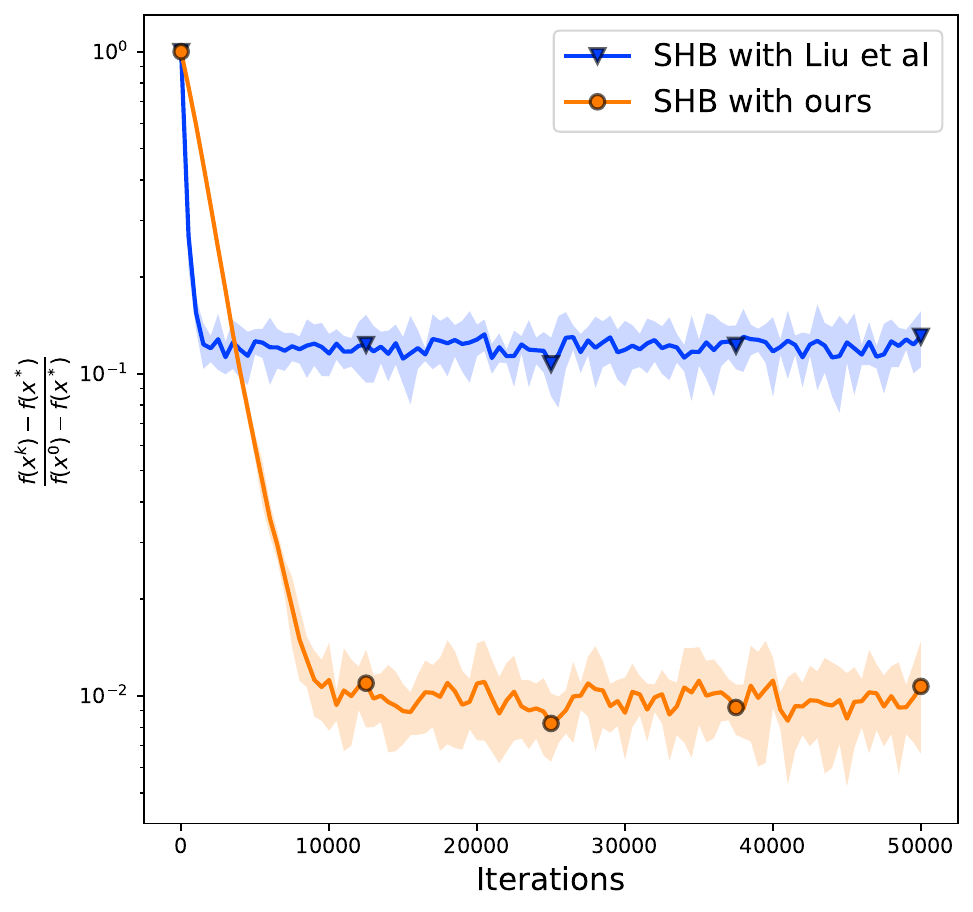}
		\caption{$\b=0.2$}
		\label{subfig:shb_const/b-0.2}
	\end{subfigure}
    ~
    \begin{subfigure}{0.2\textwidth}
		\includegraphics[width=\textwidth]{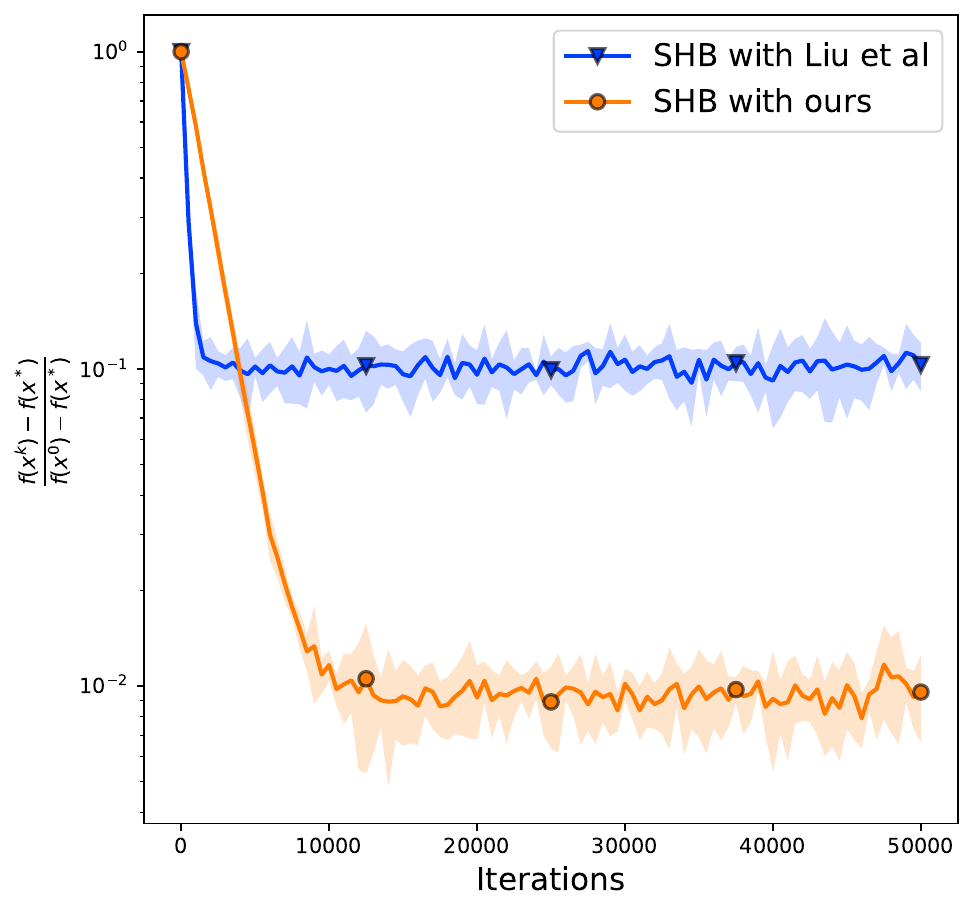}
		\caption{$\b=0.3$}
		\label{subfig:shb_const/b-0.3}
	\end{subfigure}
    ~
    \begin{subfigure}{0.2\textwidth}
		\includegraphics[width=\textwidth]{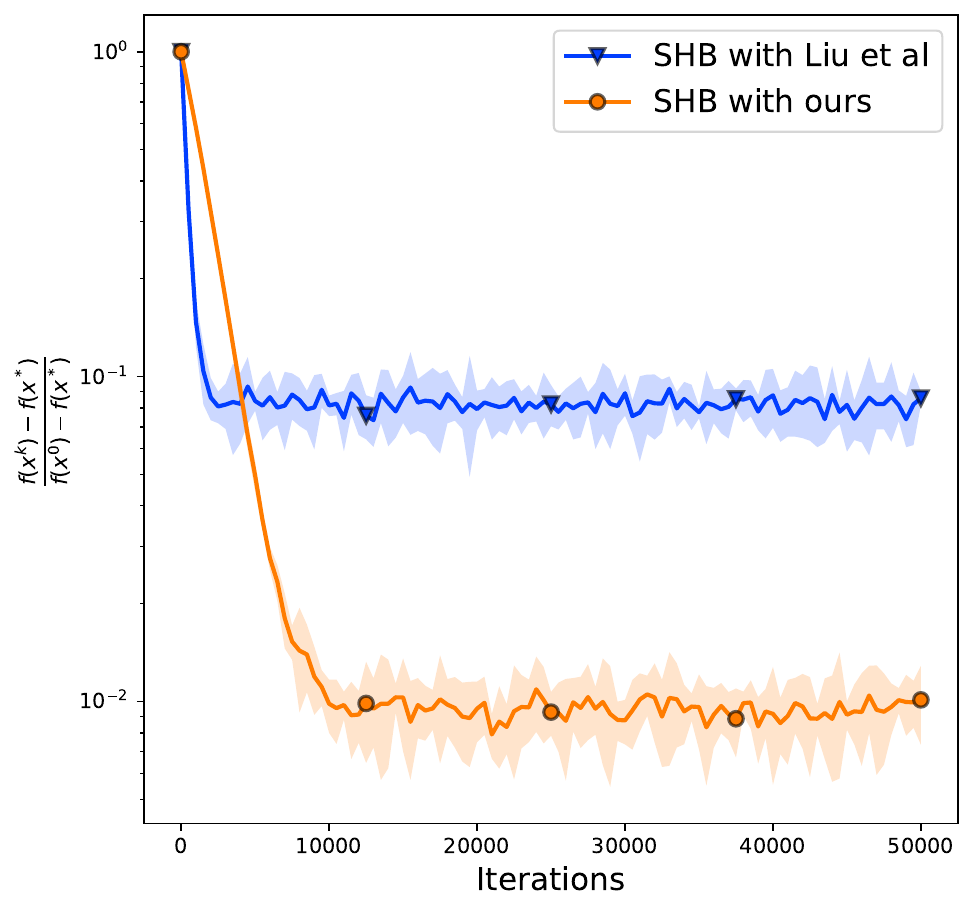}
		\caption{$\b=0.4$}
		\label{subfig:shb_const/b-0.4}
	\end{subfigure}
    ~
    \begin{subfigure}{0.2\textwidth}
		\includegraphics[width=\textwidth]{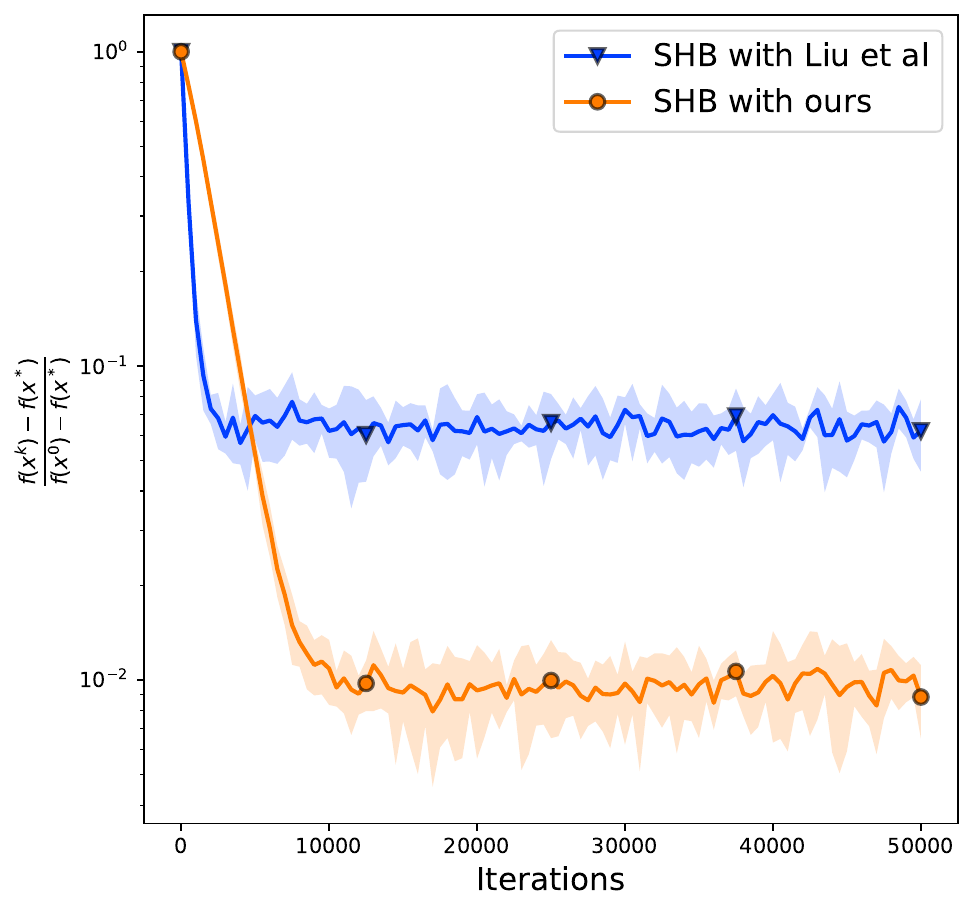}
		\caption{$\b=0.5$}
		\label{subfig:shb_const/b-0.5}
	\end{subfigure}
    ~
    \begin{subfigure}{0.2\textwidth}
		\includegraphics[width=\textwidth]{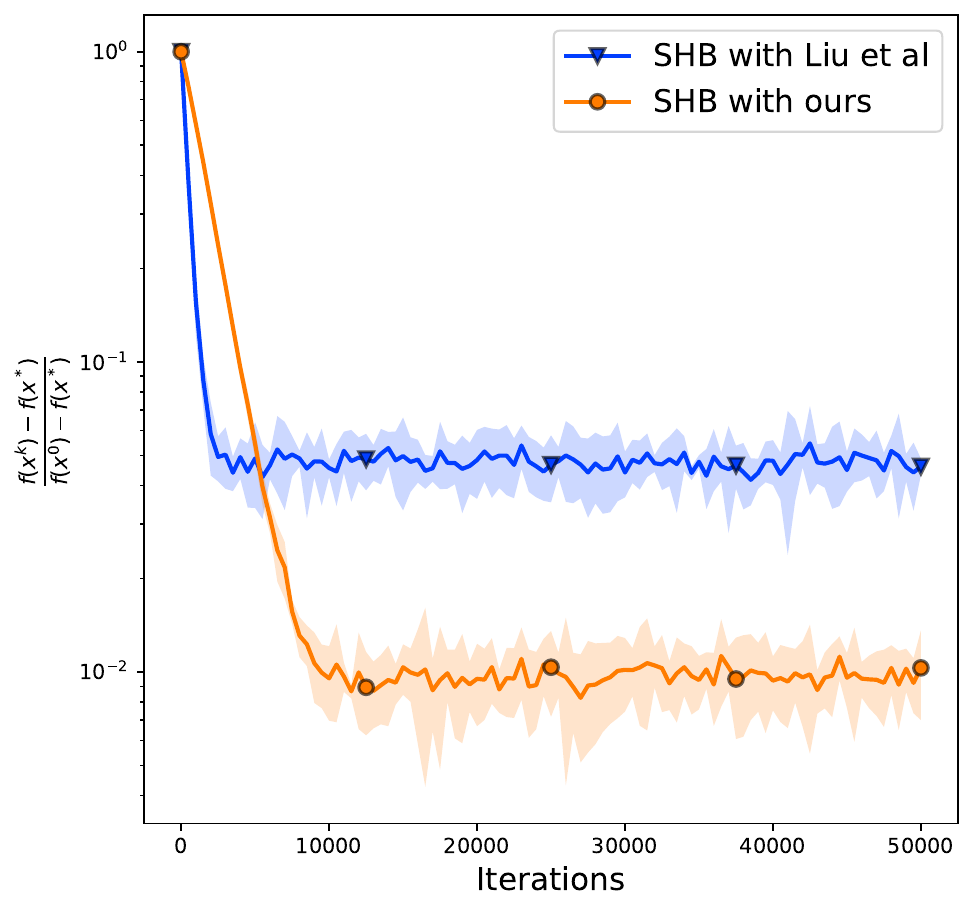}
		\caption{$\b=0.6$}
		\label{subfig:shb_const/b-0.6}
	\end{subfigure}
    ~
    \begin{subfigure}{0.2\textwidth}
		\includegraphics[width=\textwidth]{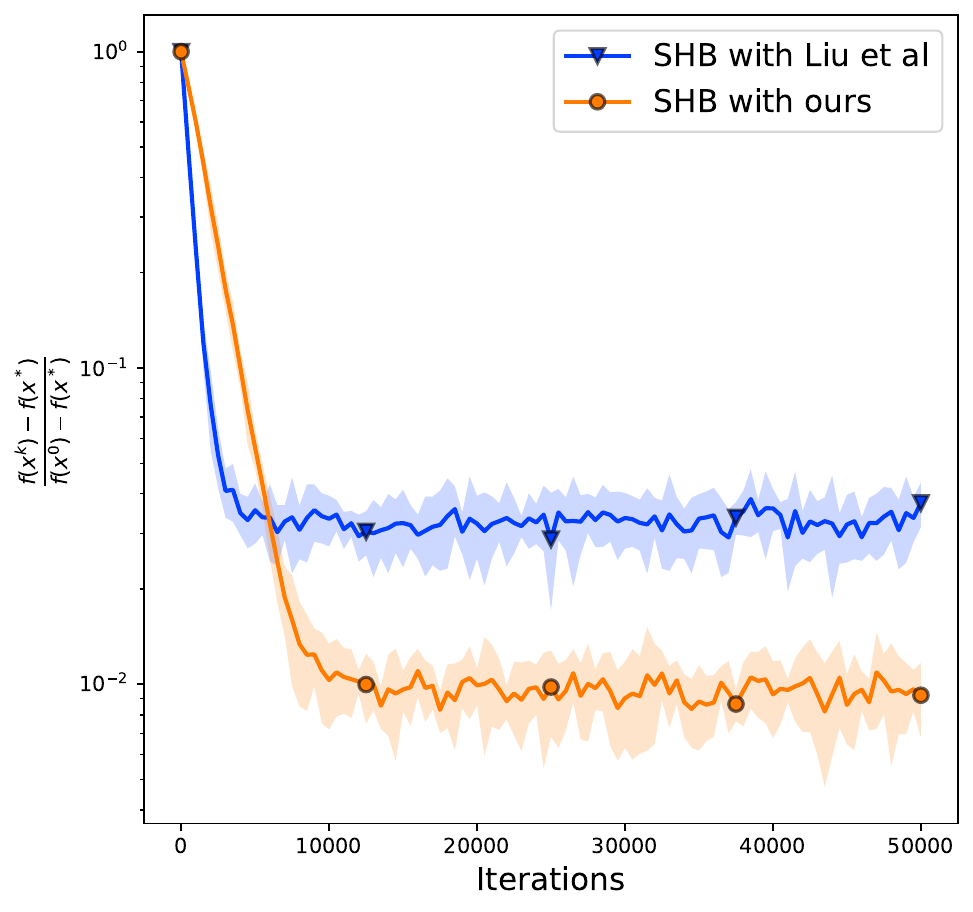}
		\caption{$\b=0.7$}
		\label{subfig:shb_const/b-0.7}
	\end{subfigure}
    ~
    \begin{subfigure}{0.2\textwidth}
		\includegraphics[width=\textwidth]{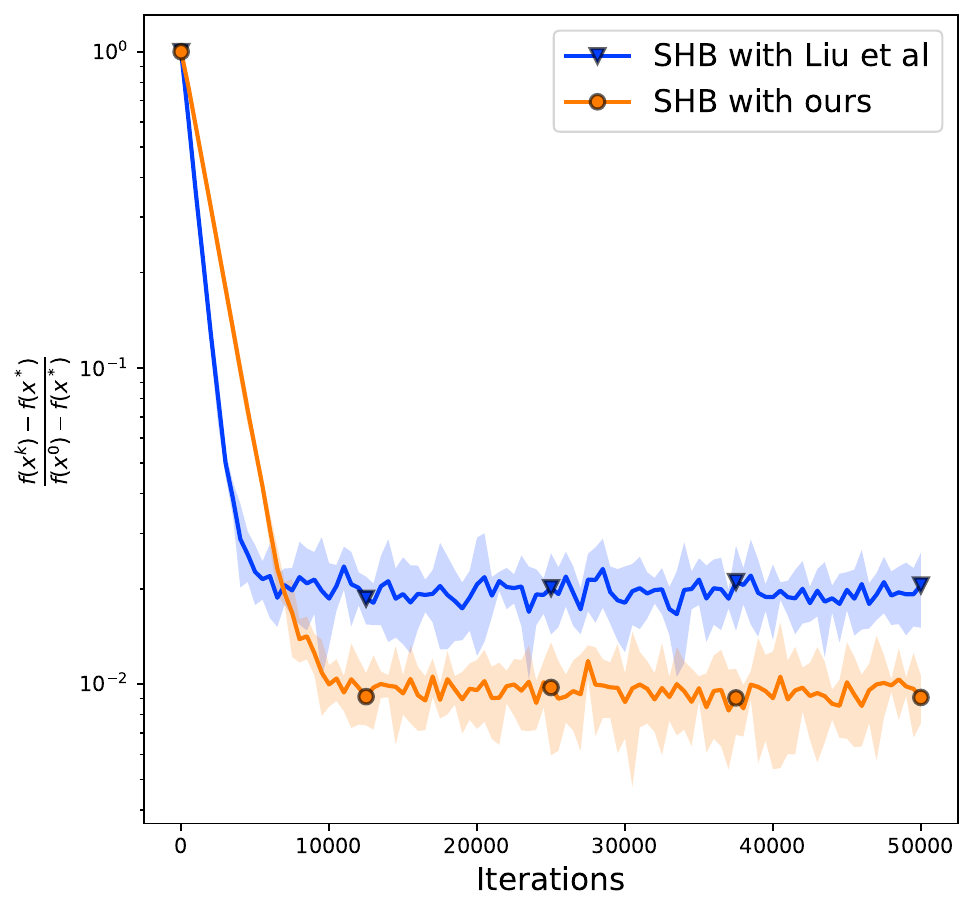}
		\caption{$\b=0.8$}
		\label{subfig:shb_const/b-0.8}
	\end{subfigure}
    ~
    \begin{subfigure}{0.2\textwidth}
		\includegraphics[width=\textwidth]{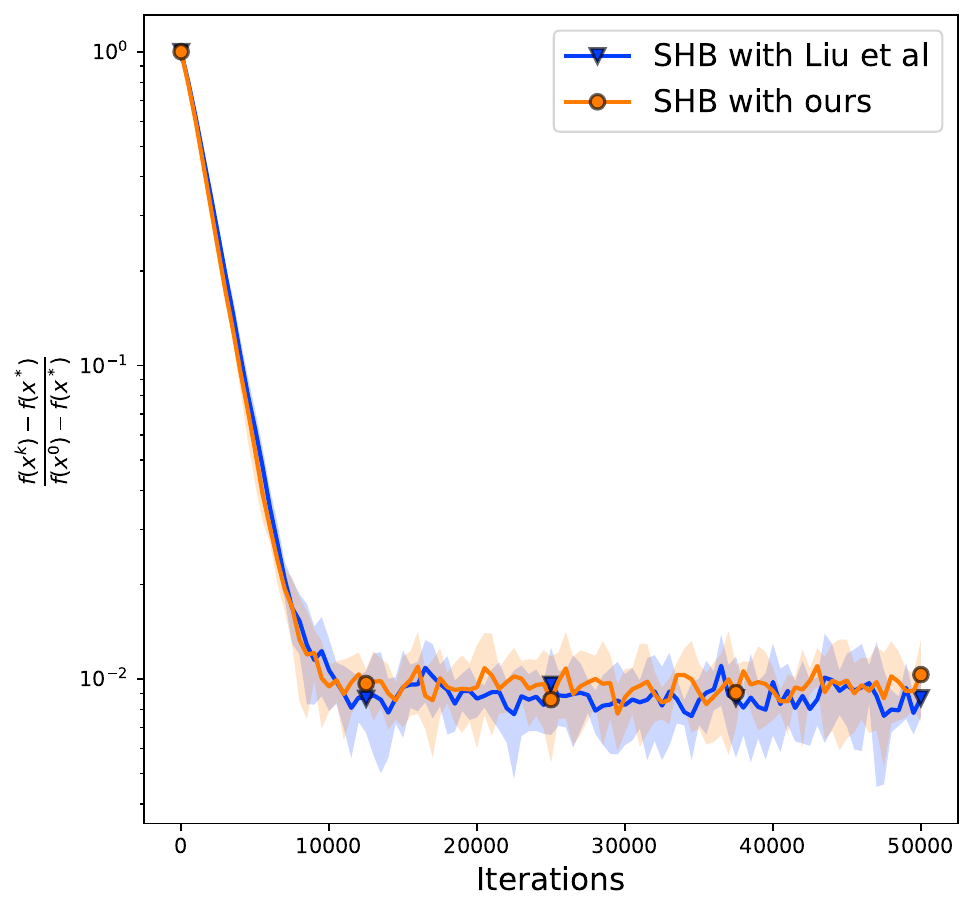}
		\caption{$\b=0.9$}
		\label{subfig:shb_const/b-0.9}
	\end{subfigure}
    
	\caption{Comparison of \ref{eq:shb} with constant step-size. \citep{liu2020improved} vs \Cref{cor:shb-const} for various momentum coefficients $\b$ on logistic regression with synthetic data.}
	\label{fig:shb_const}
\end{figure}

\subsection{Other Choices}
\label{sec:other-choices}

In this paper, we have provided convergence guarantees for \ref{eq:shb} update rule $x^{t+1}=x^t-\g_t\nabla f_{S_t}(x^t)+\b(x^t-x^{t-1})$ with the \ref{eq:mospsmax} step-size given by $\g_t=(1-\b)\min\left\{\frac{f_{S_t}(x^t)-\ell_{S_t}^*}{c\|\nabla f_{S_t}(x^t)\|^2}, \g_b\right\}$. However, as discussed in the main paper a more natural step-size would be to choose the well known SPS$_{\max}$ on the \ref{eq:shb} rule. We call this new update rule SPS$_{\max}$ with naive momentum. Here we numerically compare these two updates. As we see in \Cref{fig:mosps_vs_naive}, when $\b$ is \textquote{small} and close to $0$ then both SPS$_{\max}$ with naive momentum and \ref{eq:mospsmax} are close in performance. Of course, when $\b=0$ they are both equal to standard SPS$_{\max}$. However, as $\b$ gets larger the performance of SPS$_{\max}$ with naive momentum gets worse and less stable and at some point it diverges. For $\b\geq0.7$ there is no SPS$_{\max}$ with naive momentum, since it cannot be numerically computed. 

\begin{figure}[H]
	\centering
    \begin{subfigure}{0.2\textwidth}
		\includegraphics[width=\textwidth]{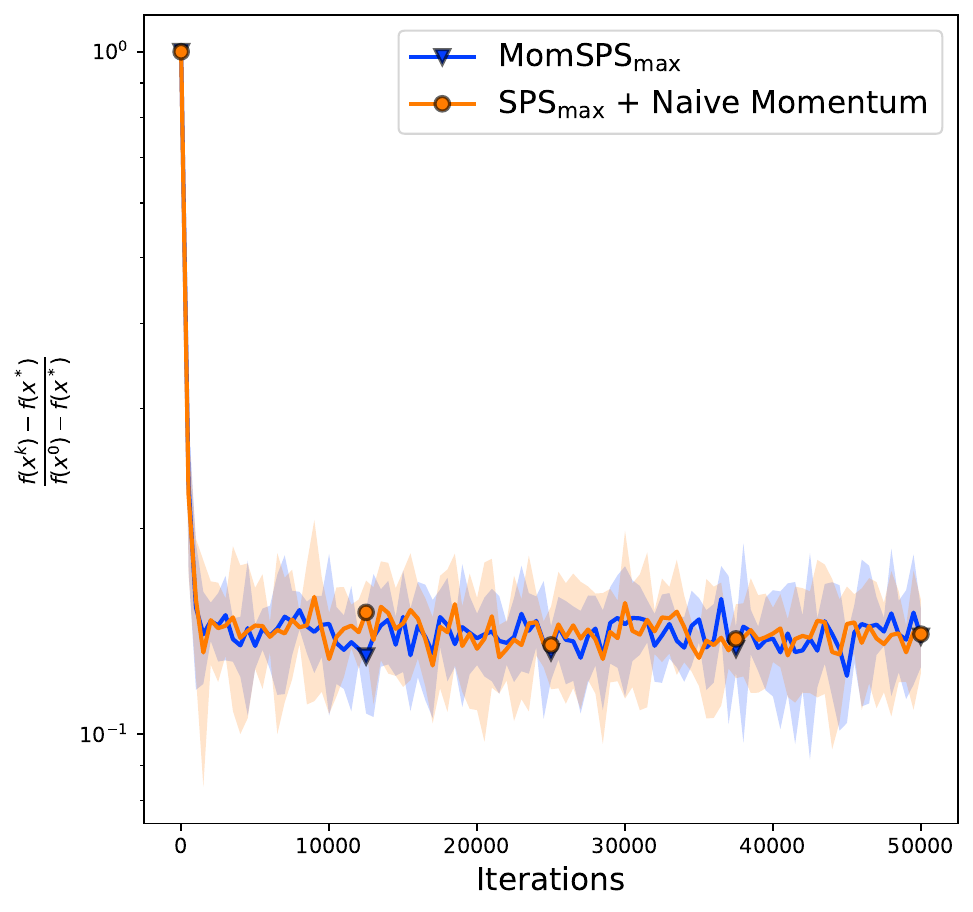}
		\caption{$\b=0.0$}
		\label{subfig:mosps_vs_naive/b-0.0.pdf}
	\end{subfigure}
    ~
	\begin{subfigure}{0.2\textwidth}
		\includegraphics[width=\textwidth]{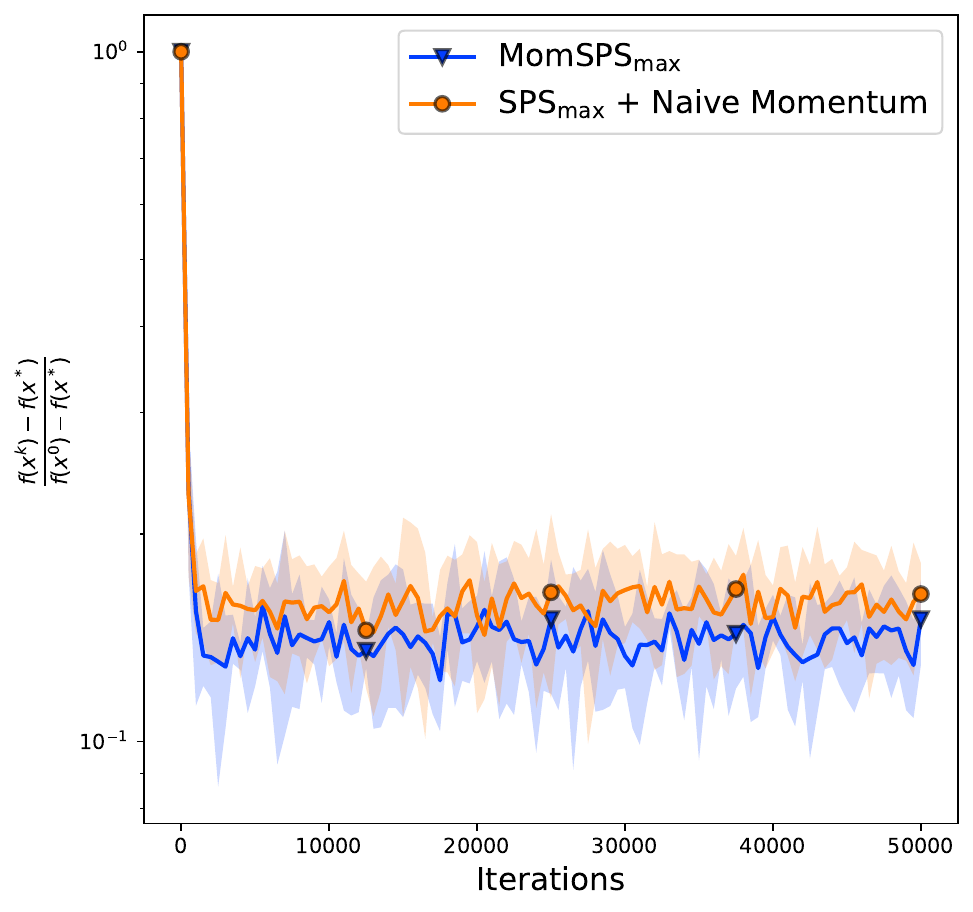}
		\caption{$\b=0.1$}
		\label{subfig:mosps_vs_naive/b-0.1.pdf}
	\end{subfigure}
    ~
    \begin{subfigure}{0.2\textwidth}
		\includegraphics[width=\textwidth]{figures/SHB+SPS/mosps_vs_naive/b-0.2.pdf}
		\caption{$\b=0.2$}
		\label{subfig:mosps_vs_naive/b-0.2.pdf}
	\end{subfigure}
    ~
	\begin{subfigure}{0.2\textwidth}
	   \includegraphics[width=\textwidth]{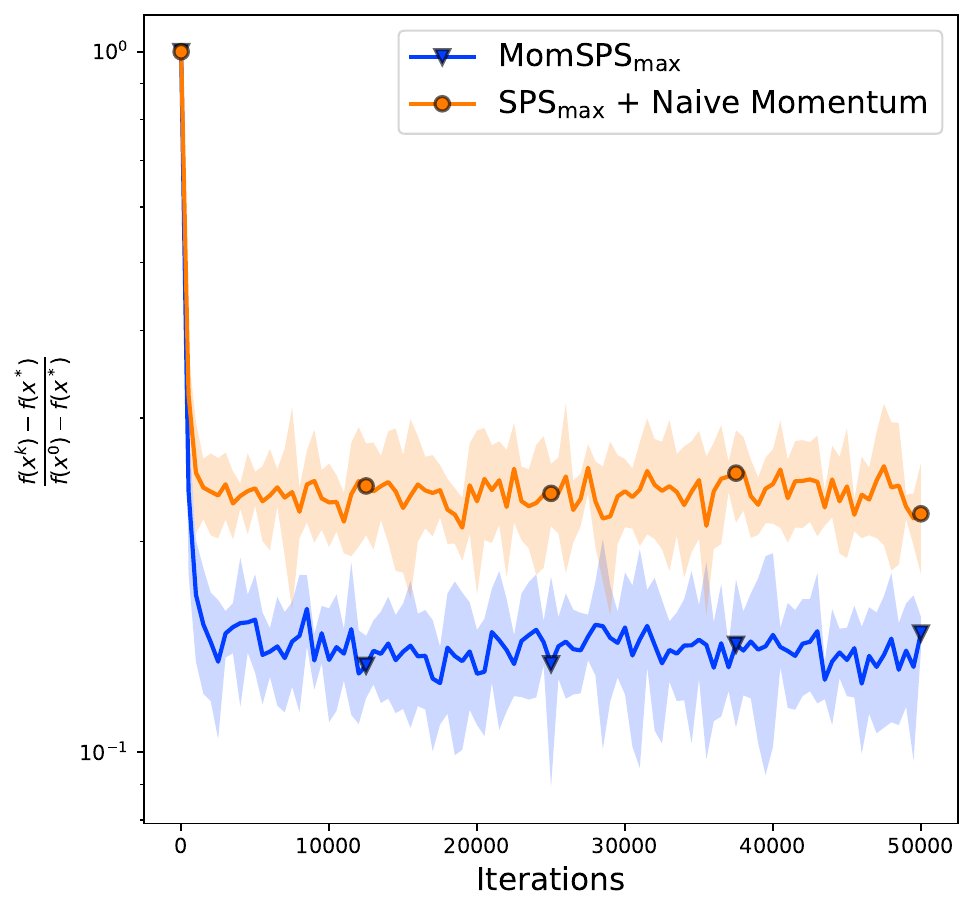}
		\caption{$\b=0.3$}
		\label{subfig:mosps_vs_naive/b-0.3.pdf}
	\end{subfigure}
    ~
    \begin{subfigure}{0.2\textwidth}
		\includegraphics[width=\textwidth]{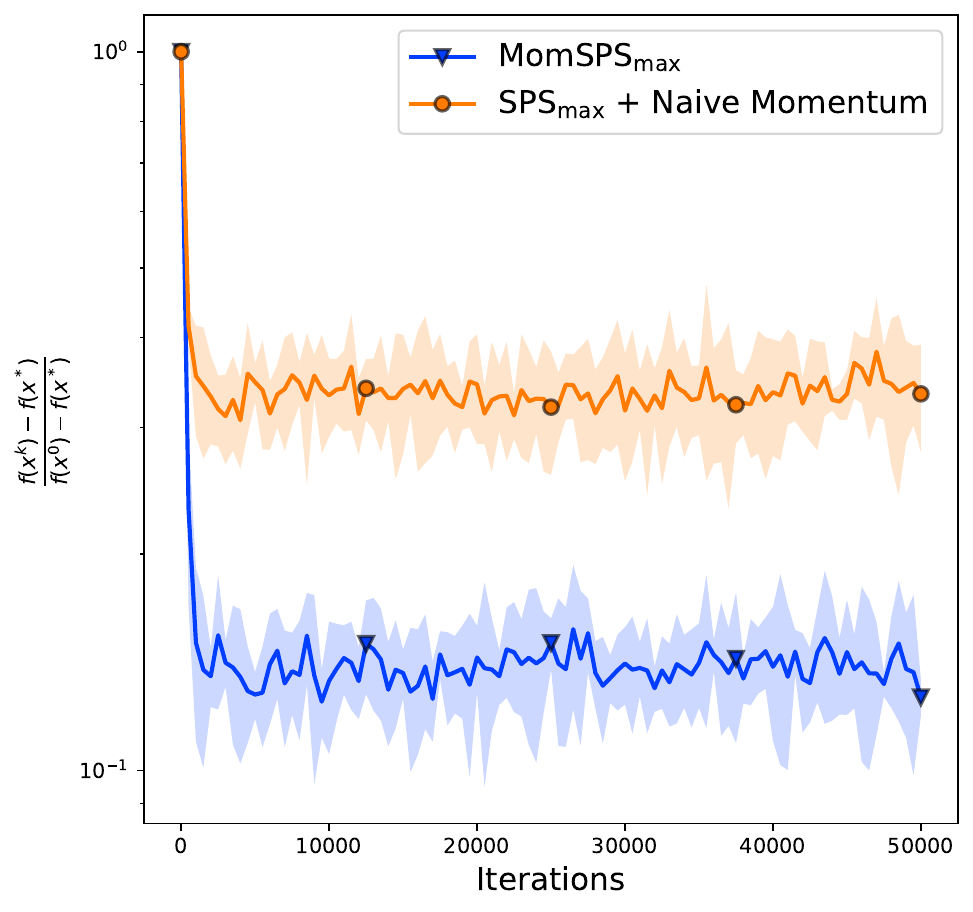}
		\caption{$\b=0.4$}
		\label{subfig:mosps_vs_naive/b-0.4.pdf}
	\end{subfigure}
    ~
	\begin{subfigure}{0.2\textwidth}
		\includegraphics[width=\textwidth]{figures/SHB+SPS/mosps_vs_naive/b-0.5.pdf}
		\caption{$\b=0.5$}
		\label{subfig:mosps_vs_naive/b-0.5.pdf}
	\end{subfigure}
    ~
    \begin{subfigure}{0.2\textwidth}
		\includegraphics[width=\textwidth]{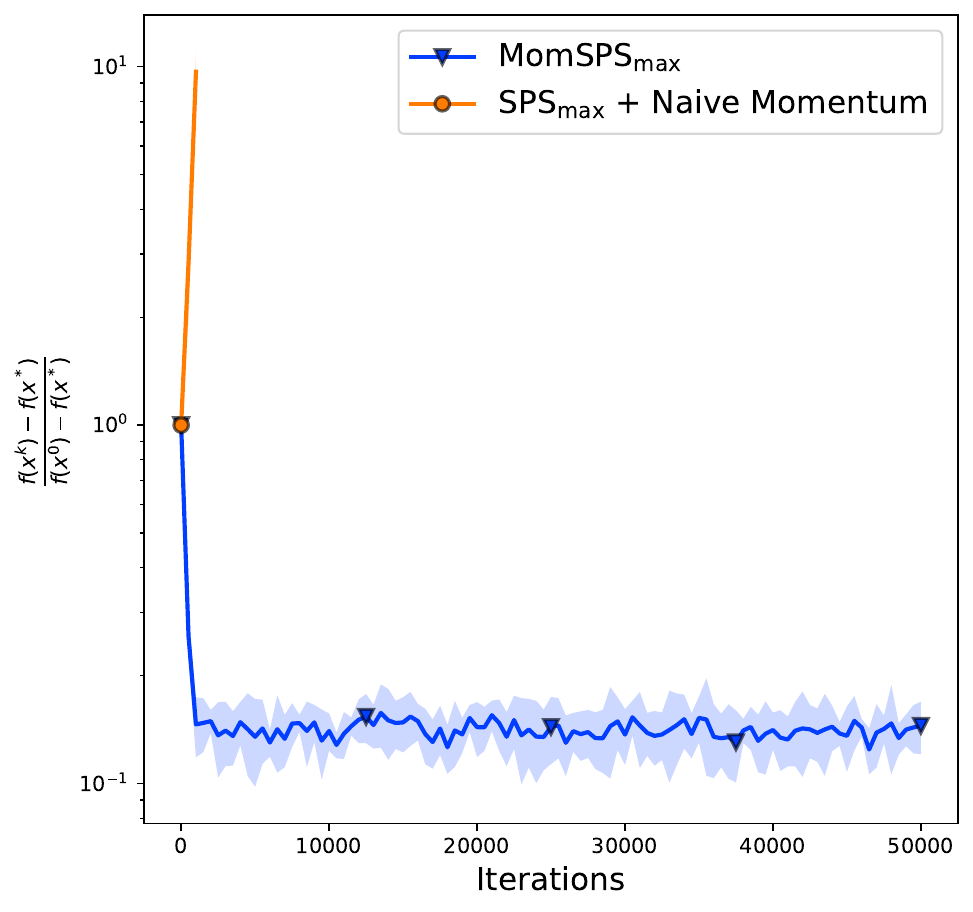}
		\caption{$\b=0.6$}
		\label{subfig:mosps_vs_naive/b-0.6.pdf}
	\end{subfigure}
    ~
	\begin{subfigure}{0.2\textwidth}
		\includegraphics[width=\textwidth]{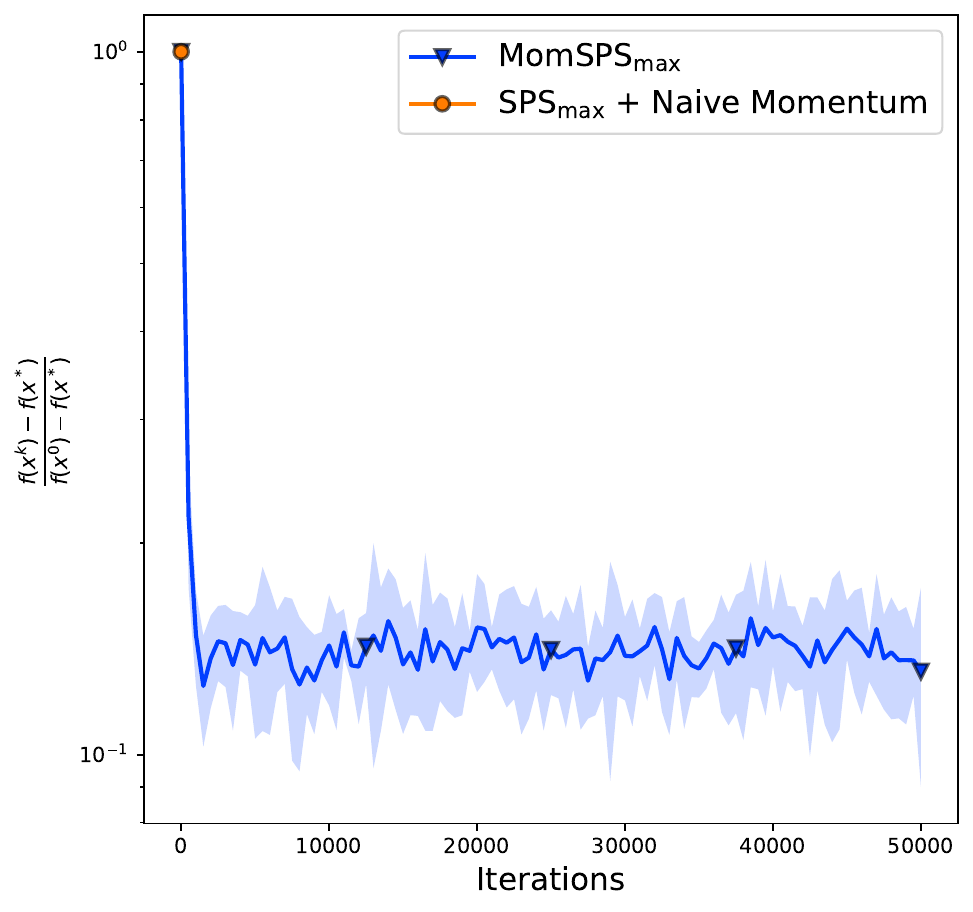}
		\caption{$\b=0.7$}
		\label{subfig:mosps_vs_naive/b-0.7.pdf}
	\end{subfigure}
    
	\caption{Comparison of \ref{eq:mospsmax} versus SPS$_{\max}$ with naive momentum for various momentum coefficients $\b$ on logistic regression with synthetic data.}
	\label{fig:mosps_vs_naive}
\end{figure}

Furthermore, recall that our decreasing variants \ref{eq:modecsps} and \ref{eq:moadasps} are respectively given by 
\begin{align*}
    &\g_t=\min\left\{\frac{(1-\b)[f_{S_t}(x^t)-\ell_{S_t}^*]}{c\sqrt{t+1}\|\nabla f_{S_t}(x^t)\|^2},\frac{\g_{t-1}\sqrt{t}}{\sqrt{t+1}}\right\},\tag{MomDecSPS}\\
    &\g_t=\min\left\{\frac{(1-\b)[f_{S_t}(x^t)-\ell_{S_t}^*]}{c\|\nabla f_{S_t}(x^t)\|^2\sqrt{\sum_{s=0}^tf_{S_s}(x^s)-\ell_{S_s}^*}},\g_{t-1}\right\}.\tag{MomAdaSPS}
\end{align*}
The above step-sizes are theoretically inspired by IMA, as explained in \Cref{sec:ima}, but again a natural question is what happens if we take the \textquote{correcting factor} $1-\b$ outside the minimum. We call these step-sizes Alternative MomDecSPS and Alternative MomAdaSPS and are given by 
\begin{align*}
    &\g_t=(1-\b)\min\left\{\frac{f_{S_t}(x^t)-\ell_{S_t}^*}{c\sqrt{t+1}\|\nabla f_{S_t}(x^t)\|^2},\frac{\g_{t-1}\sqrt{t}}{\sqrt{t+1}}\right\},\tag{Alt MomDecSPS}\\
    &\g_t=(1-\b)\min\left\{\frac{f_{S_t}(x^t)-\ell_{S_t}^*}{c\|\nabla f_{S_t}(x^t)\|^2\sqrt{\sum_{s=0}^tf_{S_s}(x^s)-\ell_{S_s}^*}},\g_{t-1}\right\}.\tag{Alt MomAdaSPS}
\end{align*}
In \Cref{fig:modecsps_vs_naive} and \Cref{fig:moadasps_vs_naive} we see that when $\b=0$ we have similiar performances, however when $\b>0$ then our proposed step-sizes are significantly better. A possible reason is that for both Alt MomDecSPS and Alt MomadaSPS it holds $\g_t\leq(1-\b)\g_{t-1}$ for all $t$. Thus inductively we get $\g_t\leq(1-\b)^t\g_0$. So when $\b>0$, $(1-\b)^t\to0$ as $t\to\infty$ which means that the step-size is too small and it barely updates $x^t$. 

\begin{figure}[H]
	\centering
    \begin{subfigure}{0.2\textwidth}
		\includegraphics[width=\textwidth]{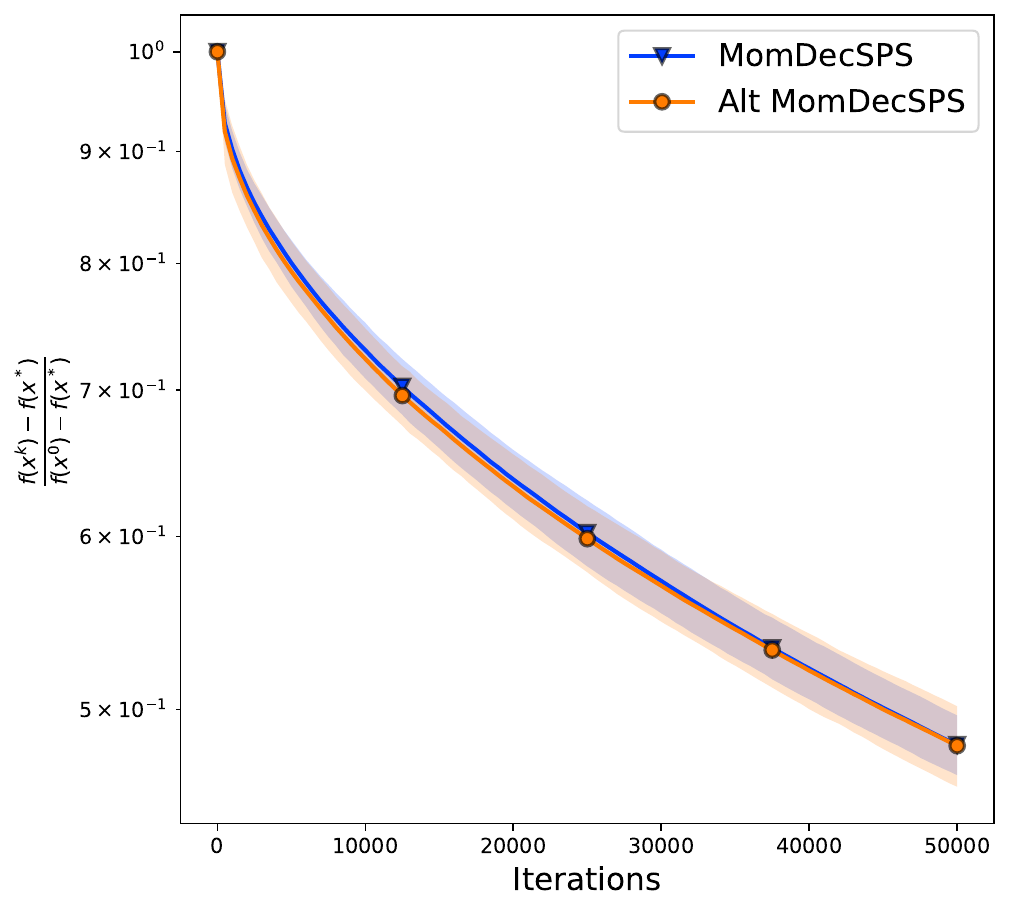}
		\caption{$\b=0.0$}
		\label{subfig:modecsps_vs_naive/b-0.0.pdf}
	\end{subfigure}
    ~
	\begin{subfigure}{0.2\textwidth}
	   \includegraphics[width=\textwidth]{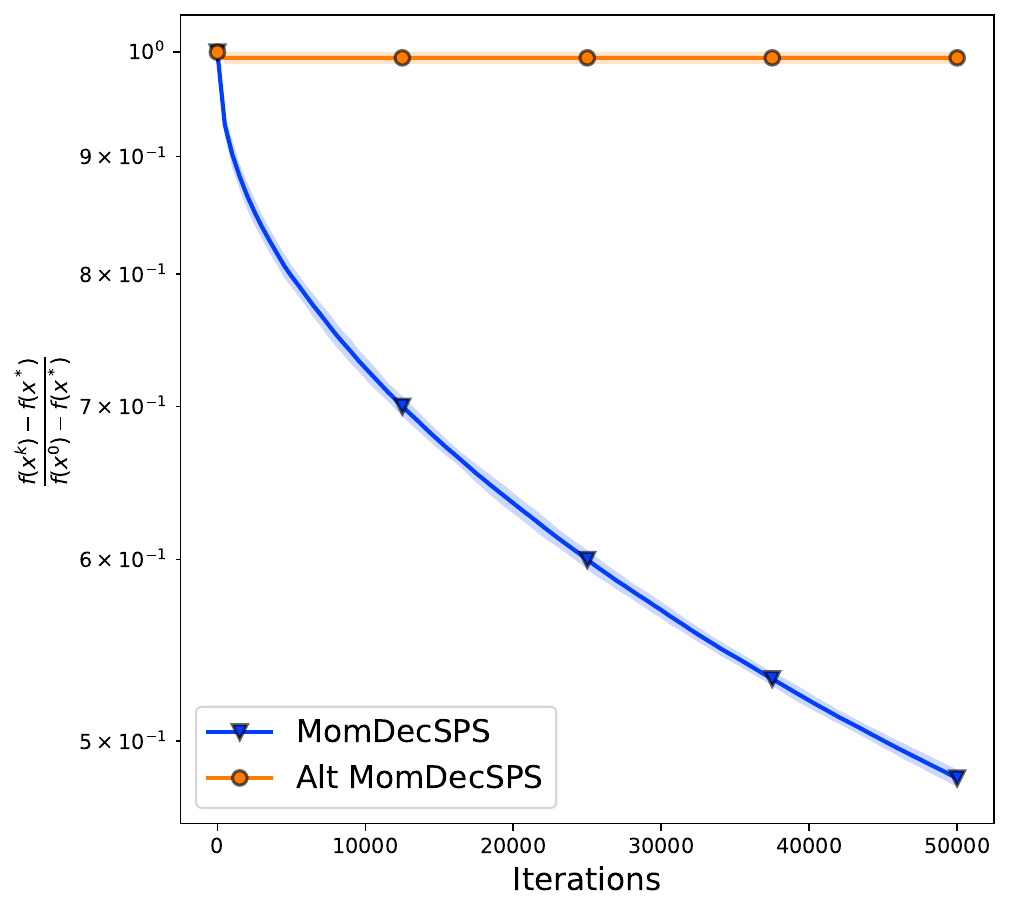}
		\caption{$\b=0.3$}
		\label{subfig:modecsps_vs_naive/b-0.3.pdf}
	\end{subfigure}
    ~
    \begin{subfigure}{0.2\textwidth}
		\includegraphics[width=\textwidth]{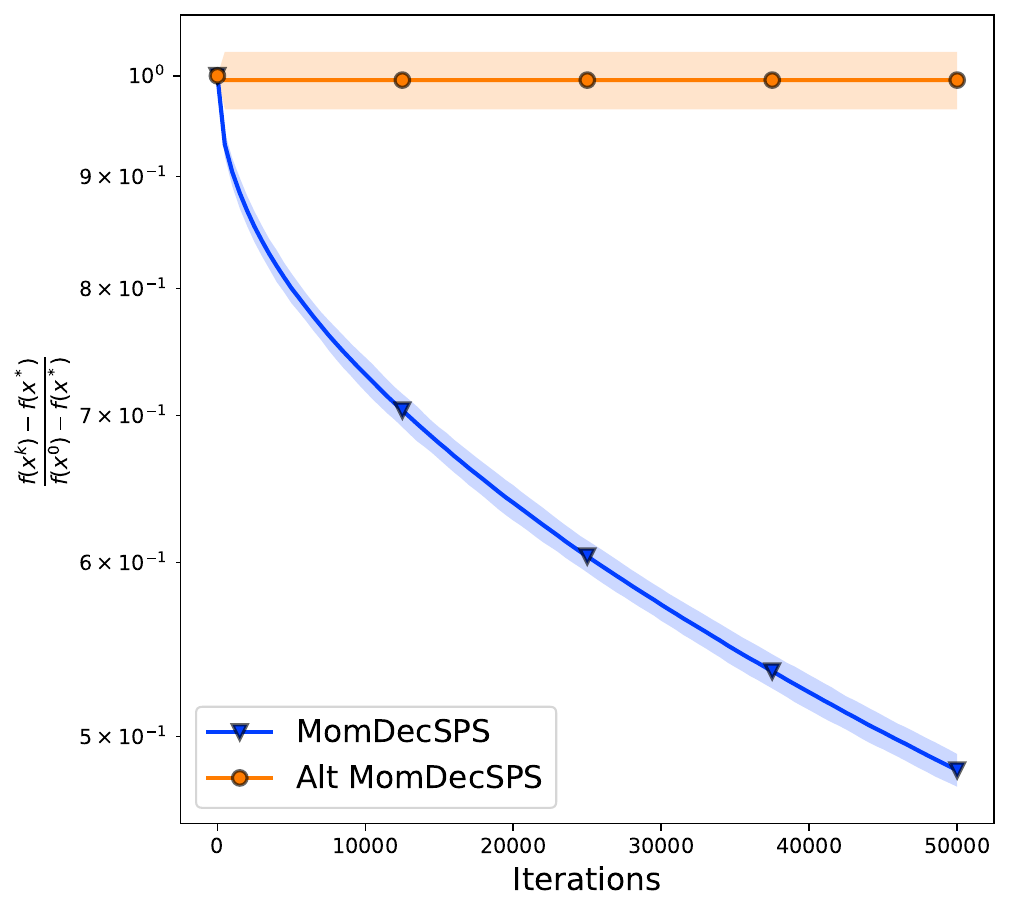}
		\caption{$\b=0.6$}
		\label{subfig:modecsps_vs_naive/b-0.6.pdf}
	\end{subfigure}
    ~
	\begin{subfigure}{0.2\textwidth}
		\includegraphics[width=\textwidth]{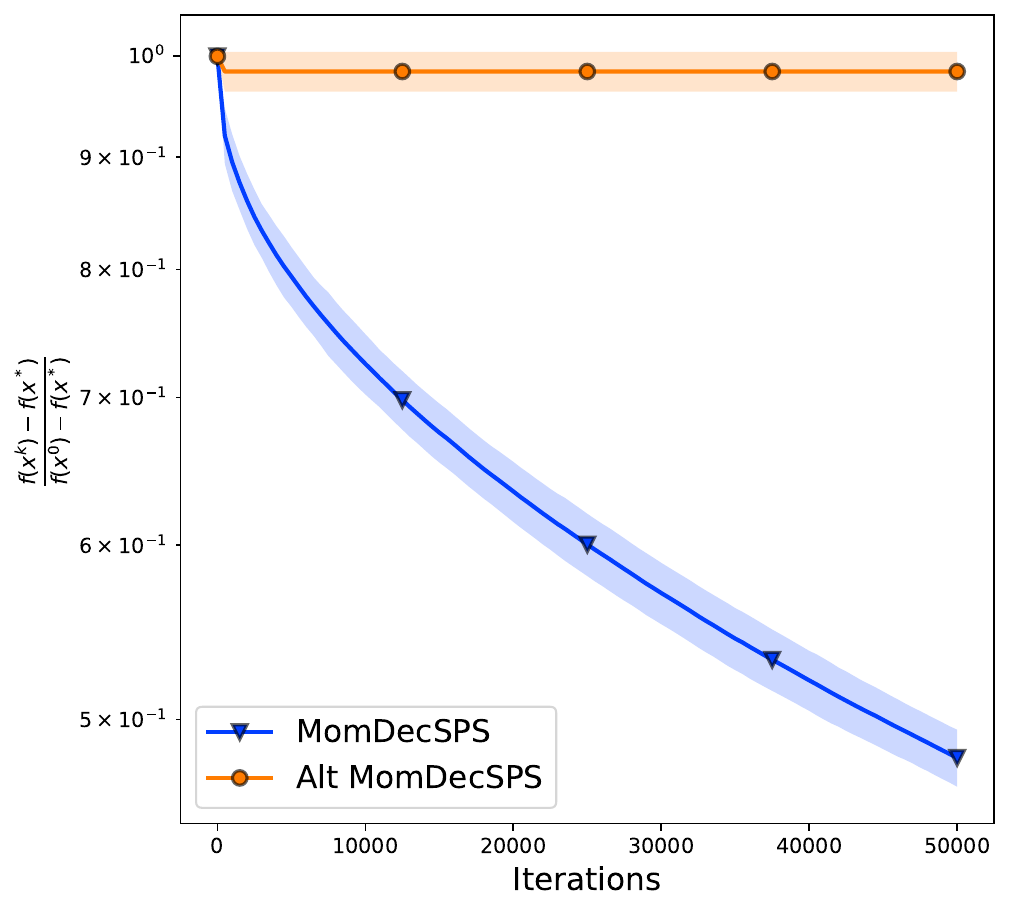}
		\caption{$\b=0.9$}
		\label{subfig:modecsps_vs_naive/b-0.9.pdf}
	\end{subfigure}
    
	\caption{Comparison of \ref{eq:modecsps} versus Alternative MomDecSPS for various momentum coefficients $\b$ on logistic regression with synthetic data.}
	\label{fig:modecsps_vs_naive}
\end{figure}

\begin{figure}[H]
	\centering
    \begin{subfigure}{0.2\textwidth}
		\includegraphics[width=\textwidth]{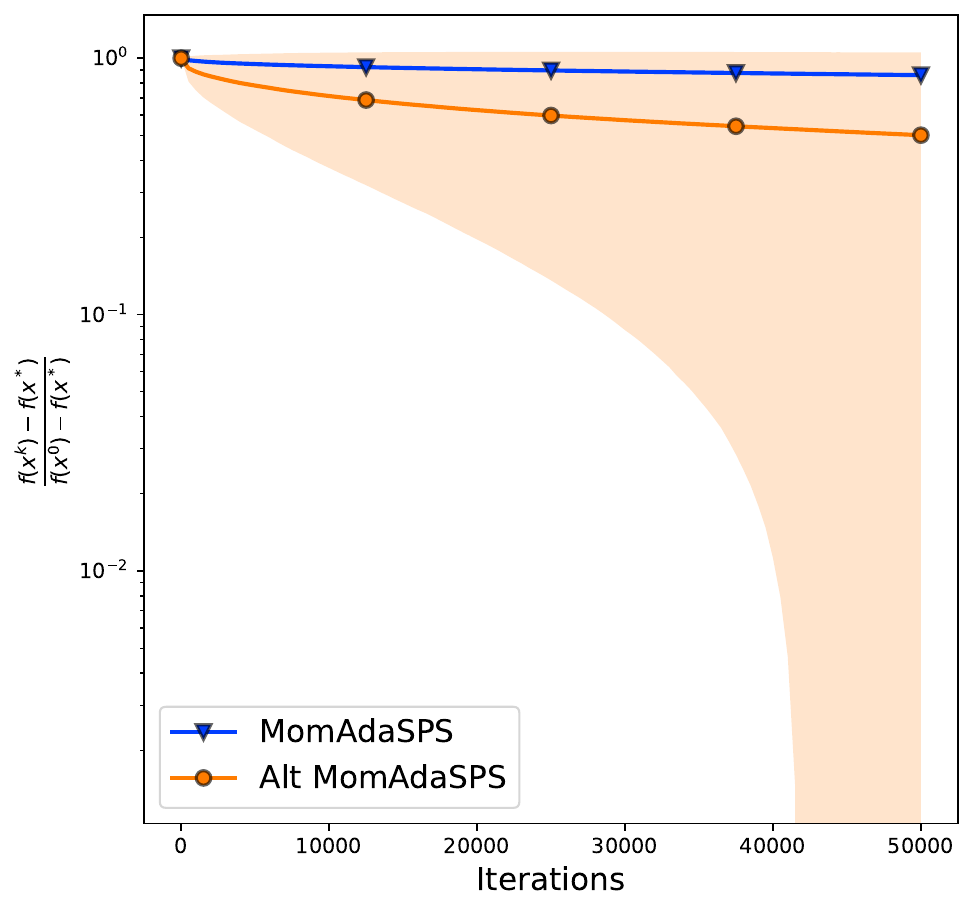}
		\caption{$\b=0.0$}
		\label{subfig:moadasps_vs_naive/b-0.0.pdf}
	\end{subfigure}
    ~
	\begin{subfigure}{0.2\textwidth}
	   \includegraphics[width=\textwidth]{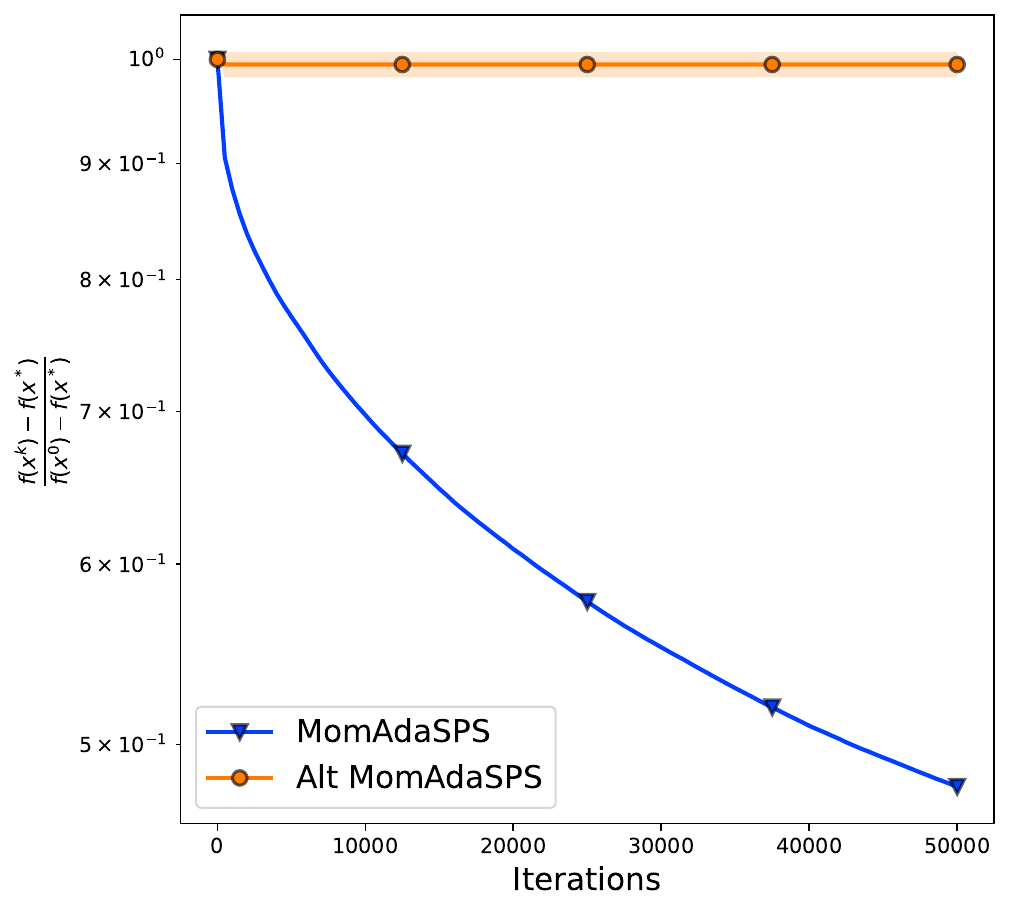}
		\caption{$\b=0.3$}
		\label{subfig:moadasps_vs_naive/b-0.3.pdf}
	\end{subfigure}
    ~
    \begin{subfigure}{0.2\textwidth}
		\includegraphics[width=\textwidth]{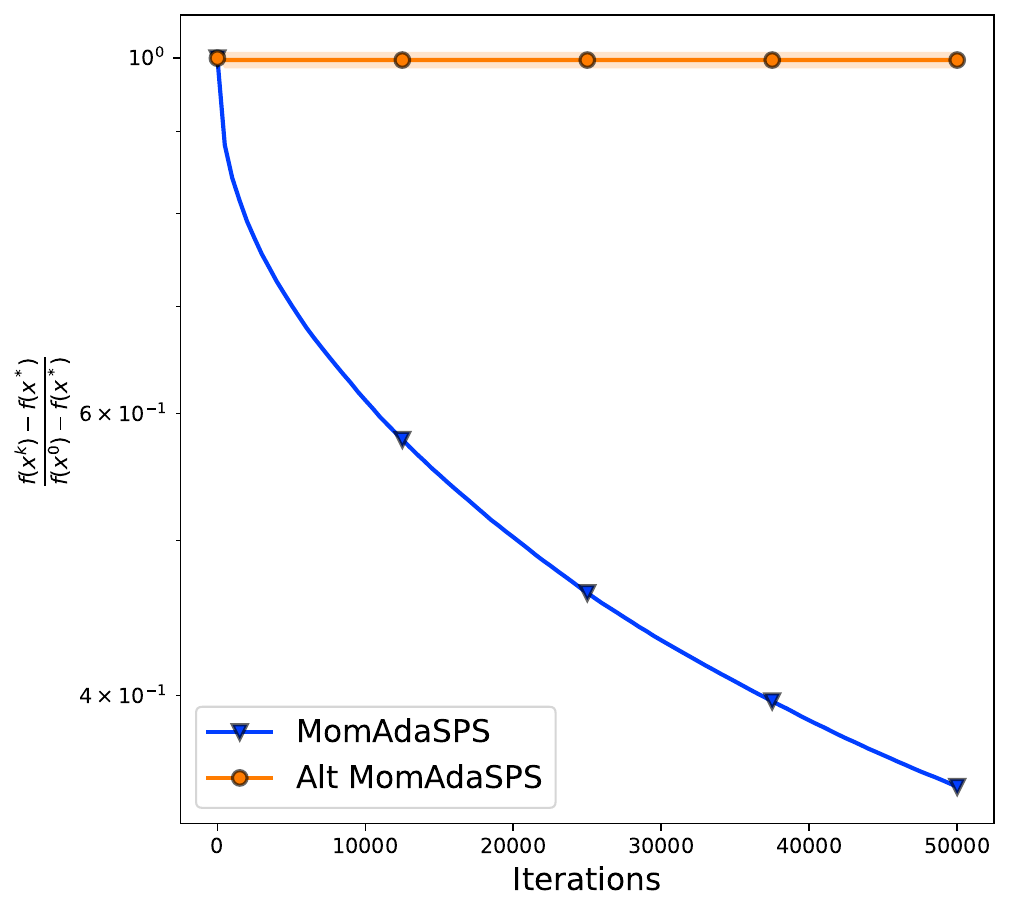}
		\caption{$\b=0.6$}
		\label{subfig:moadasps_vs_naive/b-0.6.pdf}
	\end{subfigure}
    ~
	\begin{subfigure}{0.2\textwidth}
		\includegraphics[width=\textwidth]{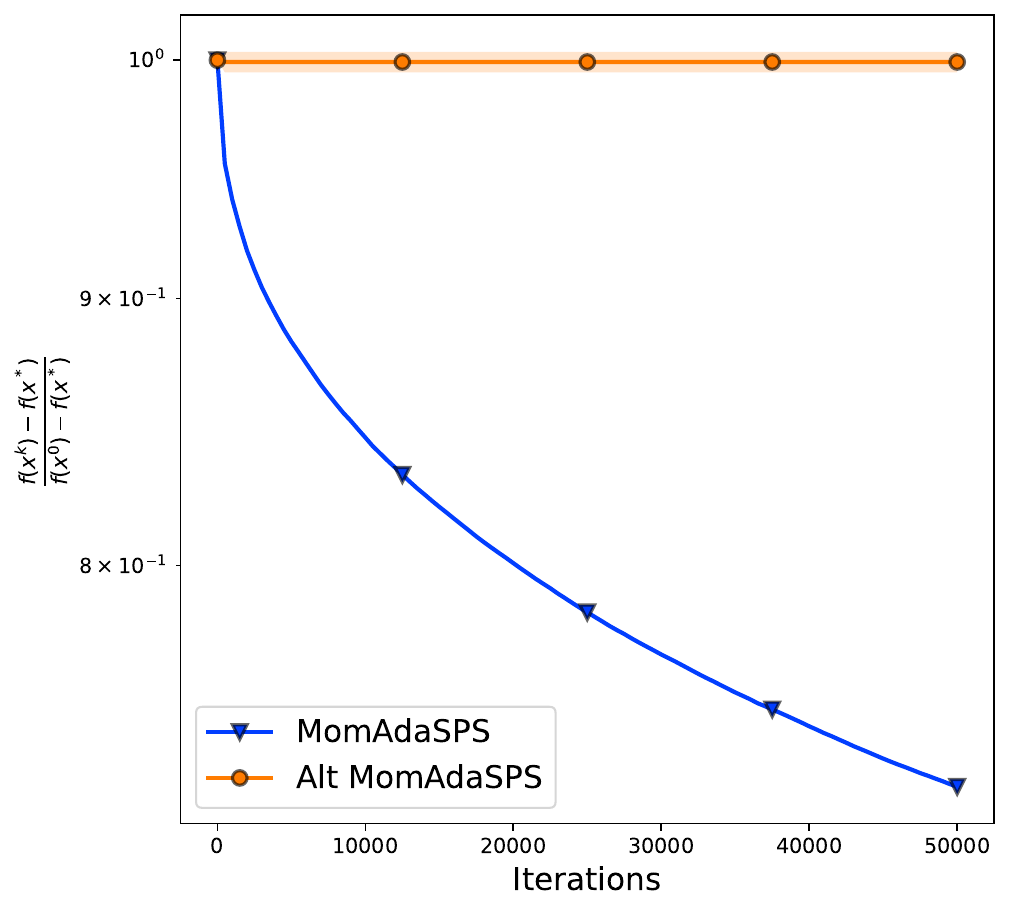}
		\caption{$\b=0.9$}
		\label{subfig:moadasps_vs_naive/b-0.9.pdf}
	\end{subfigure}
    
	\caption{Comparison of \ref{eq:moadasps} versus Alternative MomAdaSPS for various momentum coefficients $\b$ on logistic regression with synthetic data.}
	\label{fig:moadasps_vs_naive}
\end{figure}

\subsection{Is \texorpdfstring{$\g_b$}{gamma b} needed?}
\label{sec:upper-bound-need}

Recall the definition of \ref{eq:mopsmax}, $\g_t=(1-\b)\min\left\{\frac{f(x^t)-f(x^*)}{\|\nabla f(x^t)\|^2},\g_b\right\}$. The more natural choice would be without the upper bound $\g_b$. Moreover, as in \Cref{sec:other-choices}, we can have the naive version of both of the above step-sizes, i.e. with no correcting factor $1-\b$. We compare the following step-sizes on both the least squares problem, in \Cref{fig:det_ls_choices_b}, and logistic regression problem, in \Cref{fig:det_lr_choices_b}, in the deterministic setting on synthetic data. The method is \ref{eq:shb} with $\b=0.97$ for the least squares and $\b=0.3$ for the logistic regression. 
\begin{align*}
    &\g_t=(1-\b)\frac{f(x^t)-f(x^*)}{\|\nabla f(x^t)\|^2}\tag{MomPS}\\
    &\g_t=\frac{f(x^t)-f(x^*)}{\|\nabla f(x^t)\|^2}\tag{Alt MomPS}\\
    &\g_t=(1-\b)\min\left\{\frac{f(x^t)-f(x^*)}{\|\nabla f(x^t)\|^2},\g_b\right\}\tag{MomPS$_{\max}$}\\
    &\g_t=\min\left\{\frac{f(x^t)-f(x^*)}{\|\nabla f(x^t)\|^2},\g_b\right\}\tag{Alt MomPS$_{\max}$}.
\end{align*}
We see that if $\g_b$ is chosen large enough, all these step-sizes have similar performance. However, we highlight the fact that convergence guarantees are known only for \ref{eq:mopsmax}. 

\begin{figure}[H]
	\centering
    \begin{subfigure}{0.2\textwidth}
		\includegraphics[width=\textwidth]{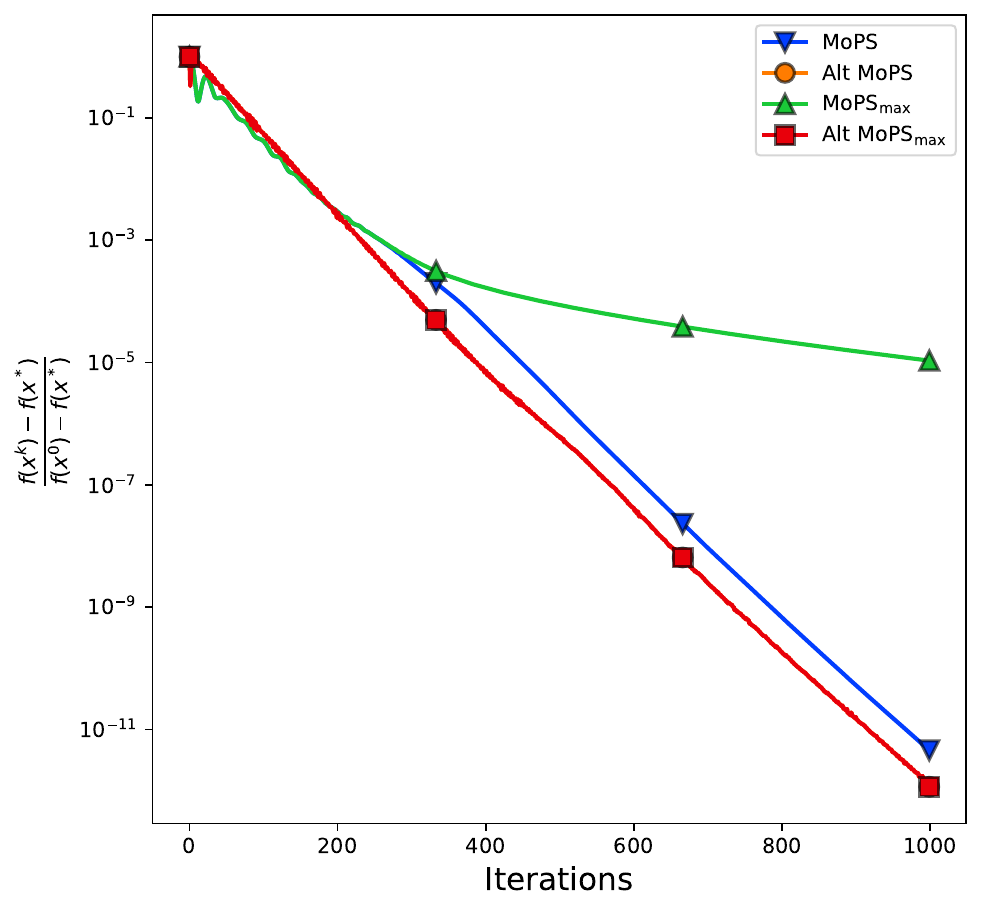}
		\caption{$\g_b=0.1$}
		\label{subfig:det_ls_choices_b-0.1}
	\end{subfigure}
    ~
	\begin{subfigure}{0.2\textwidth}
		\includegraphics[width=\textwidth]{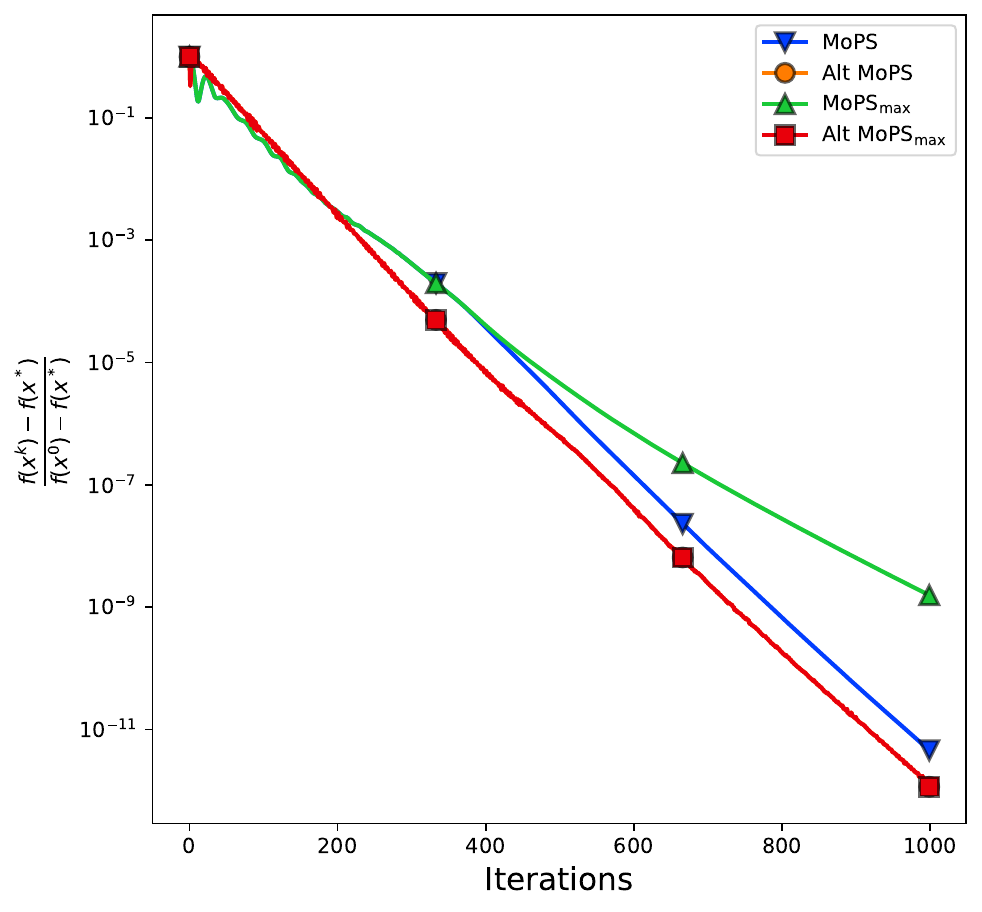}
		\caption{$\g_b=0.5$}
		\label{subfig:det_ls_choices_b-0.5}
	\end{subfigure}
    ~
    \begin{subfigure}{0.2\textwidth}
		\includegraphics[width=\textwidth]{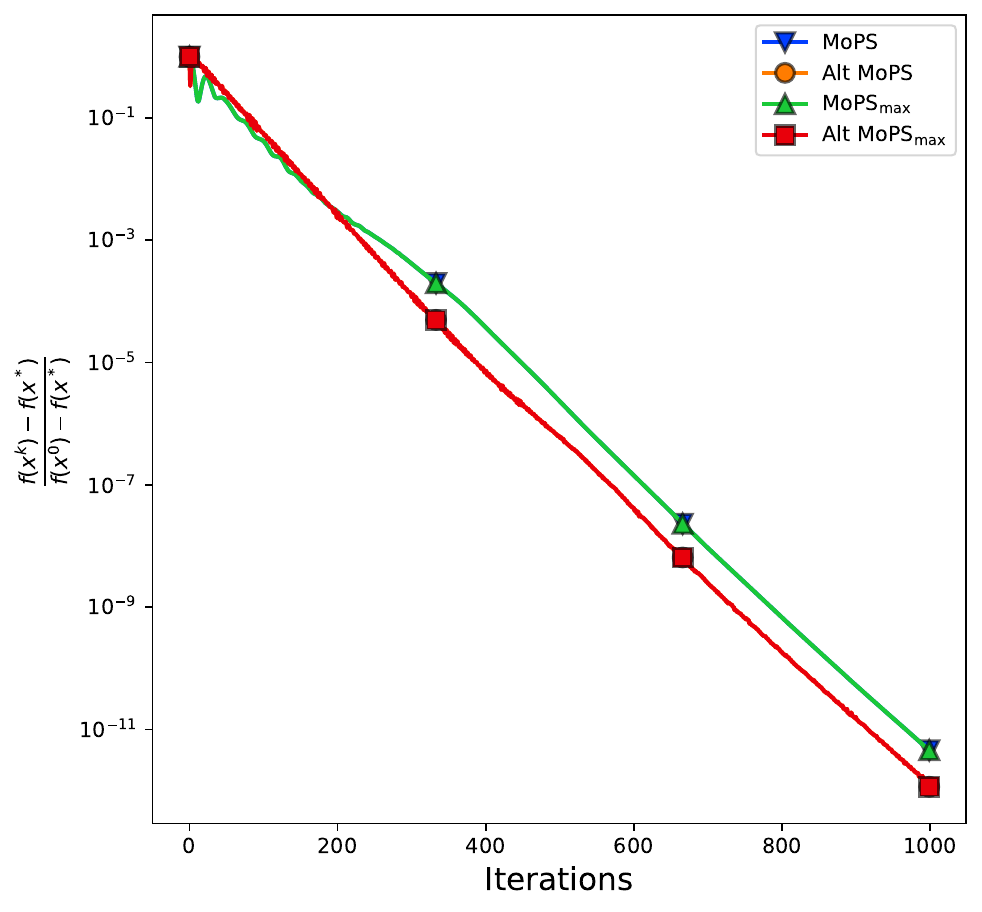}
		\caption{$\g_b=1$}
		\label{subfig:det_ls_choices_b-1}
	\end{subfigure}
    ~
    \begin{subfigure}{0.2\textwidth}
		\includegraphics[width=\textwidth]{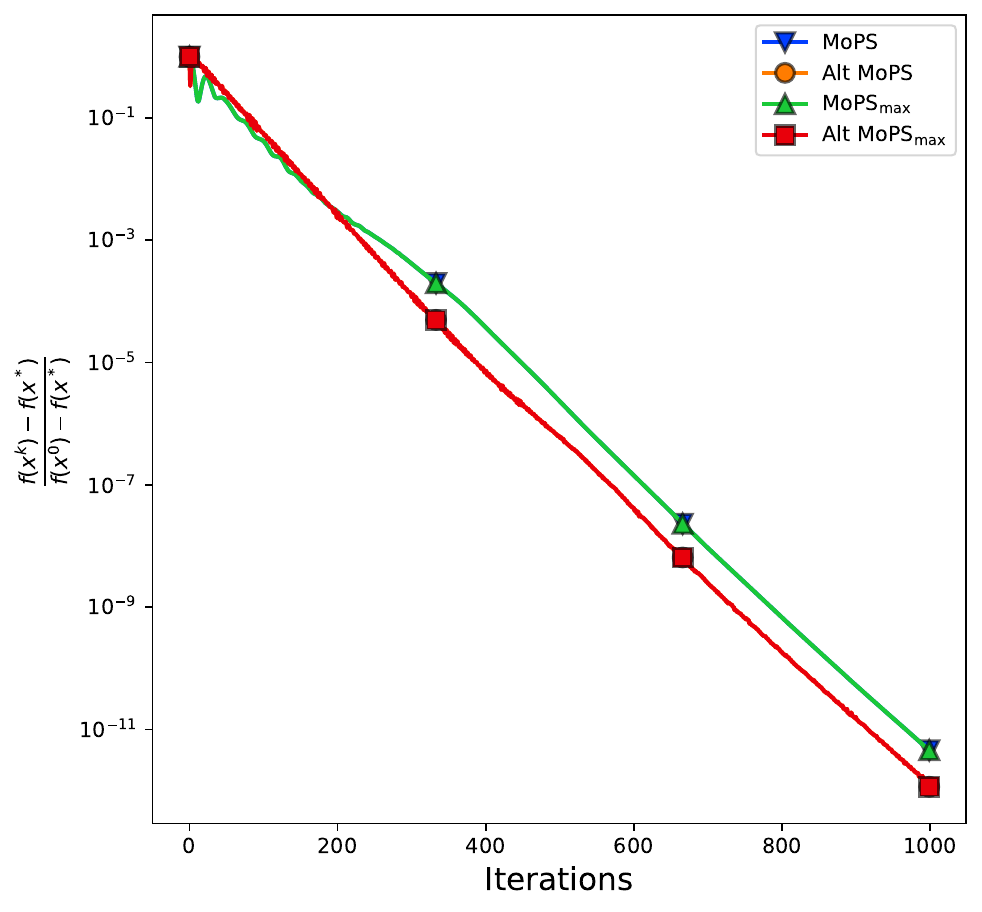}
		\caption{$\g_b=10$}
		\label{subfig:det_ls_choices_b-10}
	\end{subfigure}
    
	\caption{Comparison of various upper bounds for \ref{eq:mopsmax} and other alternatives on least squares with synthetic data.}
	\label{fig:det_ls_choices_b}
\end{figure}

\begin{figure}[H]
	\centering
    \begin{subfigure}{0.2\textwidth}
		\includegraphics[width=\textwidth]{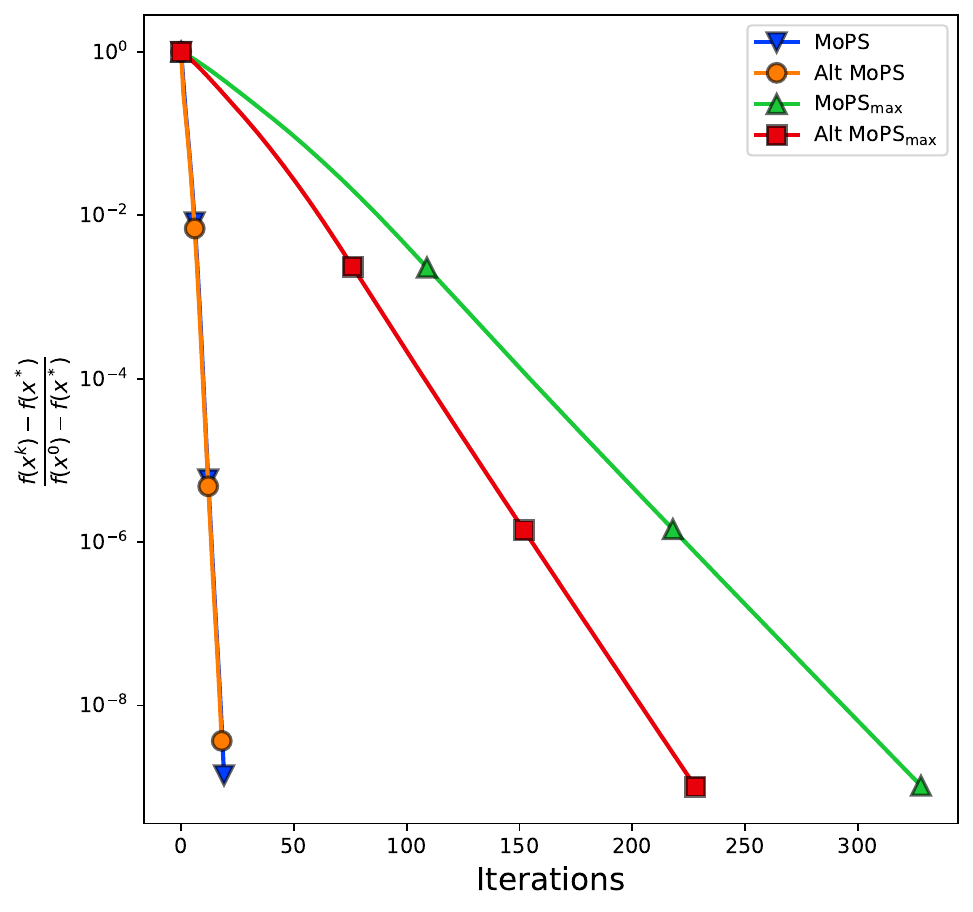}
		\caption{$\g_b=1$}
		\label{subfig:det_lr_choices_b-1}
	\end{subfigure}
    ~
	\begin{subfigure}{0.2\textwidth}
		\includegraphics[width=\textwidth]{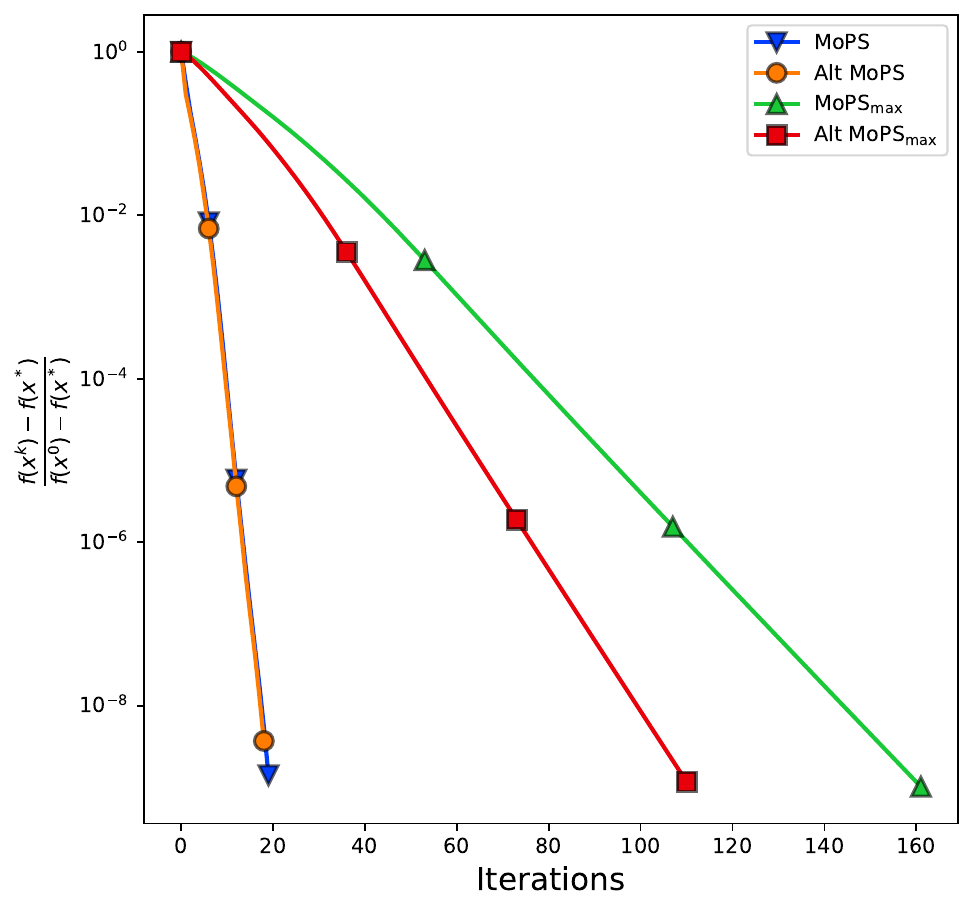}
		\caption{$\g_b=2$}
		\label{subfig:det_lr_choices_b-2}
	\end{subfigure}
    ~
    \begin{subfigure}{0.2\textwidth}
		\includegraphics[width=\textwidth]{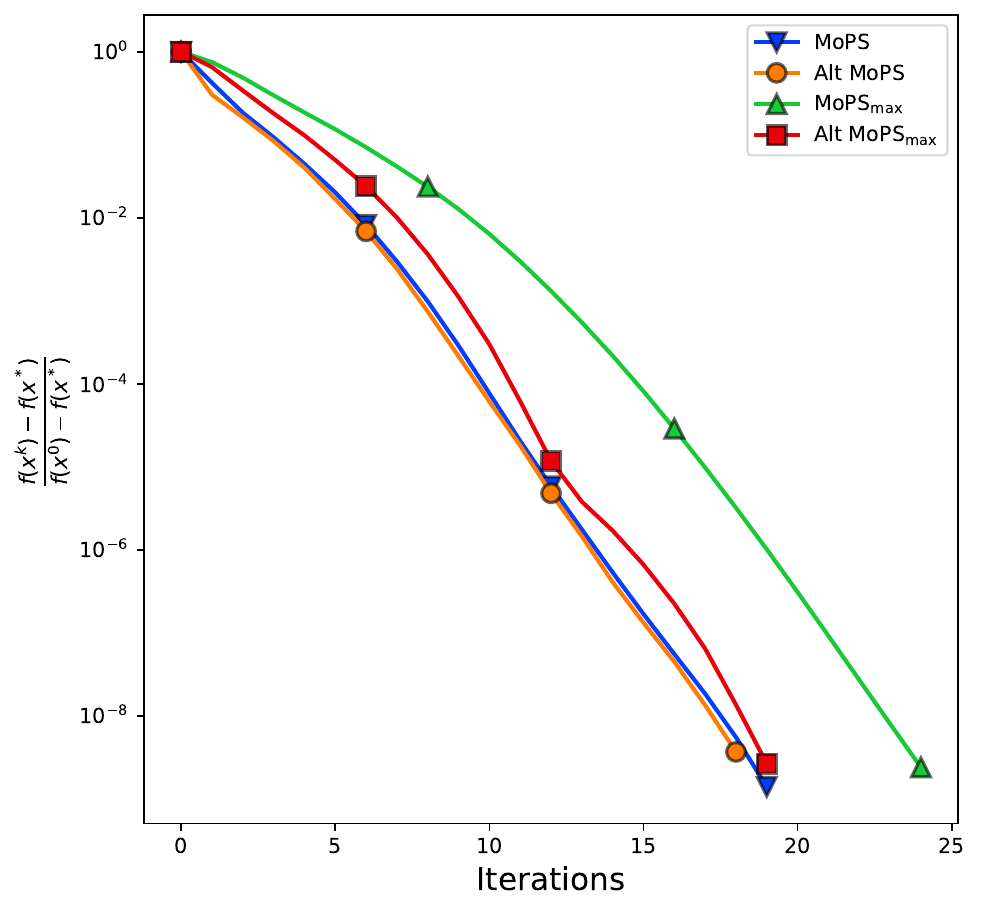}
		\caption{$\g_b=10$}
		\label{subfig:det_lr_choices_b-10}
	\end{subfigure}
    ~
    \begin{subfigure}{0.2\textwidth}
		\includegraphics[width=\textwidth]{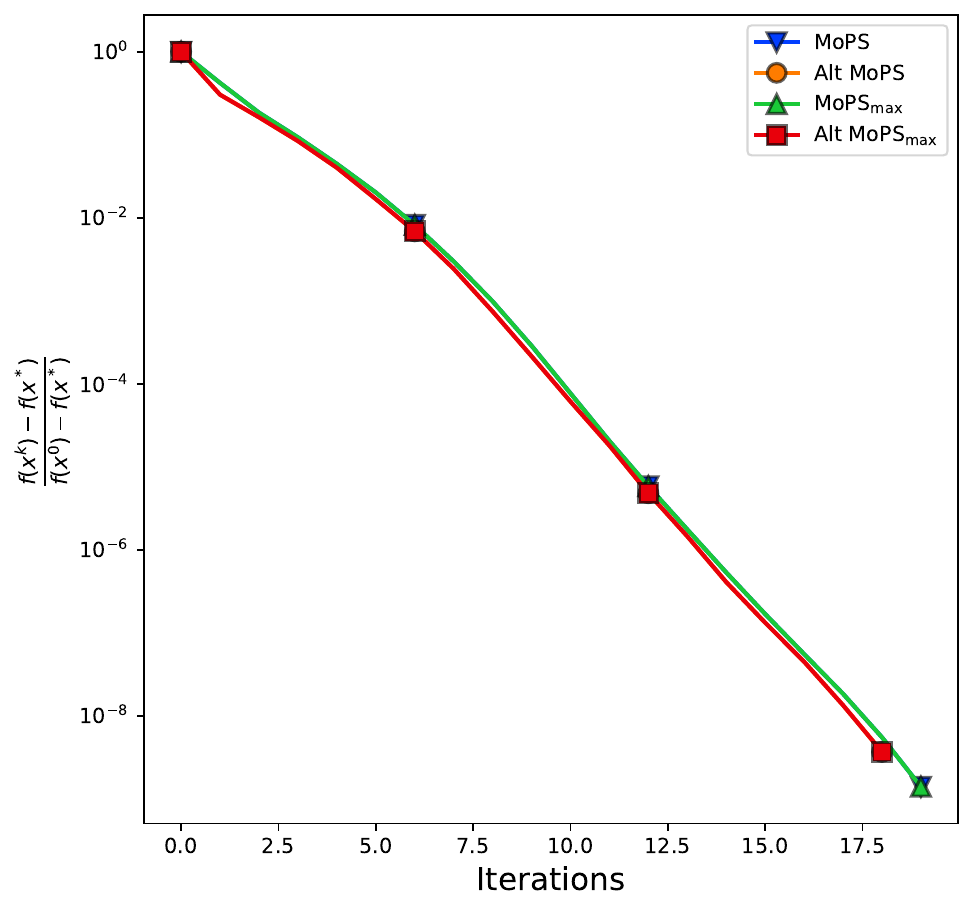}
		\caption{$\g_b=100$}
		\label{subfig:det_lr_choices_b-100}
	\end{subfigure}
    
	\caption{Comparison of various upper bounds for \ref{eq:mopsmax} and other alternatives on logistic regression with synthetic data.}
	\label{fig:det_lr_choices_b}
\end{figure}

\subsection{Plots for \texorpdfstring{$C_1$}{C1} and \texorpdfstring{$C_2$}{C2}}

Below it a plot of the constants $C_1$ and $C_2$ as functions of $\b$ with $\g_b=2$ and $\a=1$ (recall $\g_b\geq\a$) from \Cref{thm:shb-sps-max}. Both functions are hyperbolas and the vertical line is the constant $\b=\frac{\a}{2\g_b-\a}$. We see that in the interval $\b\in\left[0,\frac{\a}{2\g_b-\a}\right)$ both functions are increasing. 

\begin{figure}[H]
	\centering
	\includegraphics[width=0.3\textwidth]{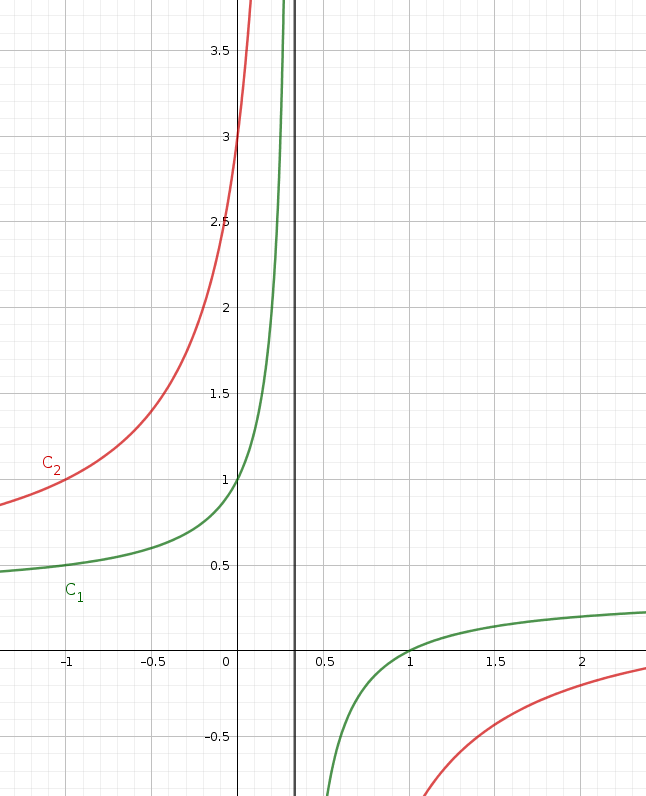}
    
	\caption{Plots for the constants $C_1$ and $C_2$ as functions of $\b$. Here we have chosen $\g_b=2$ and $\a=1$.}
	\label{fig:consts}
\end{figure}

\subsection{More convex experiments and parameter settings}
\label{sec:logreg-exp}

In this section, we list the parameters, architectures and hardware that we used for the deep learning experiments. The information is collected in \Cref{tab:logreg-params}. We also include some extra experiments in \Cref{fig:m_logreg_set_vehicle_bs_16_e_100,fig:m_logreg_set_letter_bs_256_e_100,fig:dec_m_logreg_set_vehicle_bs_85_e_100,fig:dec_m_logreg_set_glass_bs_32_e_100}. 

\begin{table}[H]
    \centering
    \begin{tabular}{ll}
        \toprule
        Hyper-parameter & Value \\
        \midrule
        Architecture & Logistic Regression \\
        GPUs & 1x NVIDIA GeForce RTX 3050 \\
        Batch-size & See caption of each plot \\
        Epochs & 100 \\
        Trials & 5 \\
        Weight Decay & 0.0 \\
        \bottomrule
    \end{tabular}
    \caption{Logistic regression experiment}
    \label{tab:logreg-params}
\end{table}

\begin{minipage}{0.48\textwidth}
\begin{figure}[H]
	\centering
	\includegraphics[width=\textwidth]{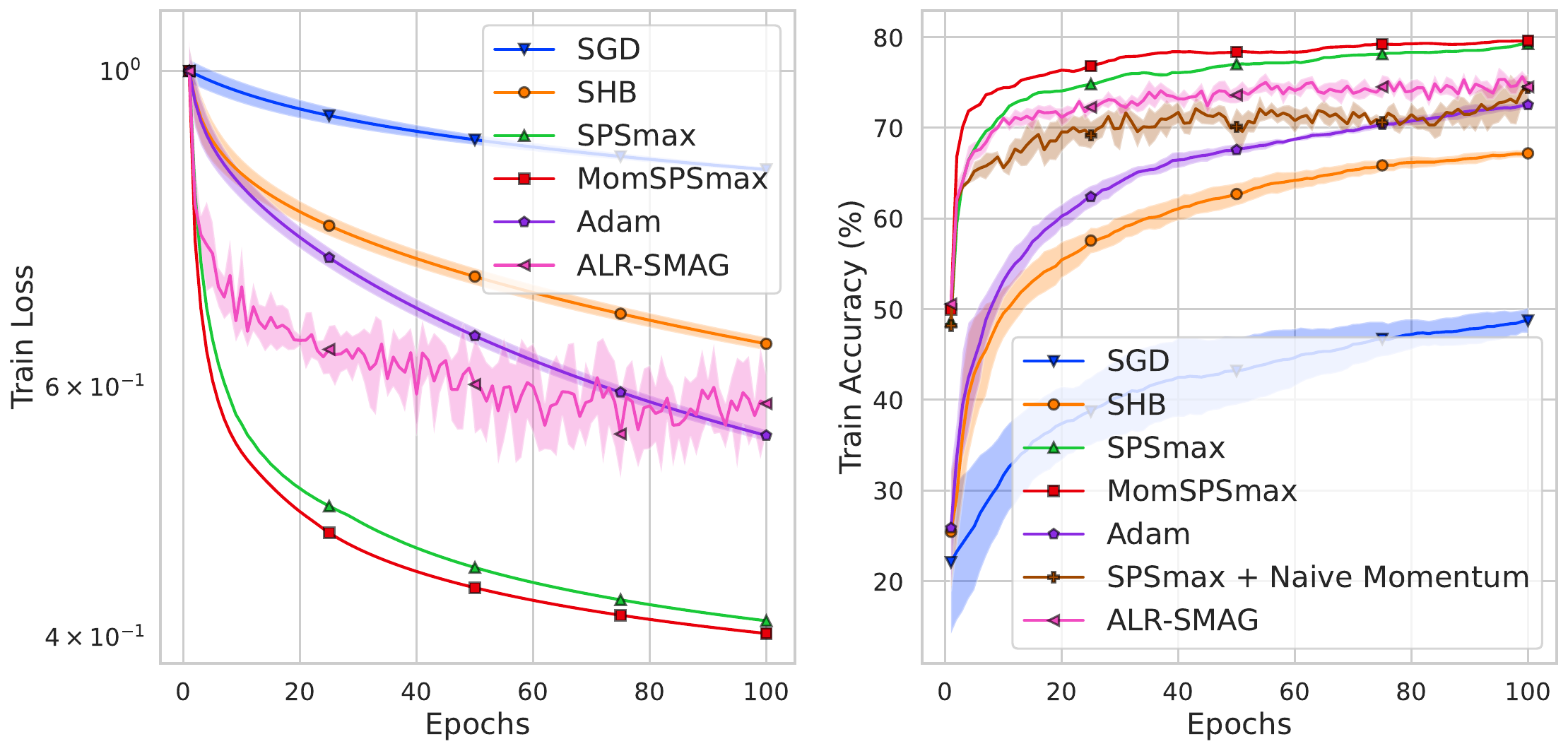}
	\caption{\small LibSVM dataset: vehicle, Batch size: 16}
	\label{fig:m_logreg_set_vehicle_bs_16_e_100}
\end{figure}
\end{minipage}
~
\begin{minipage}{0.48\textwidth}
\begin{figure}[H]
	\centering
	\includegraphics[width=\textwidth]{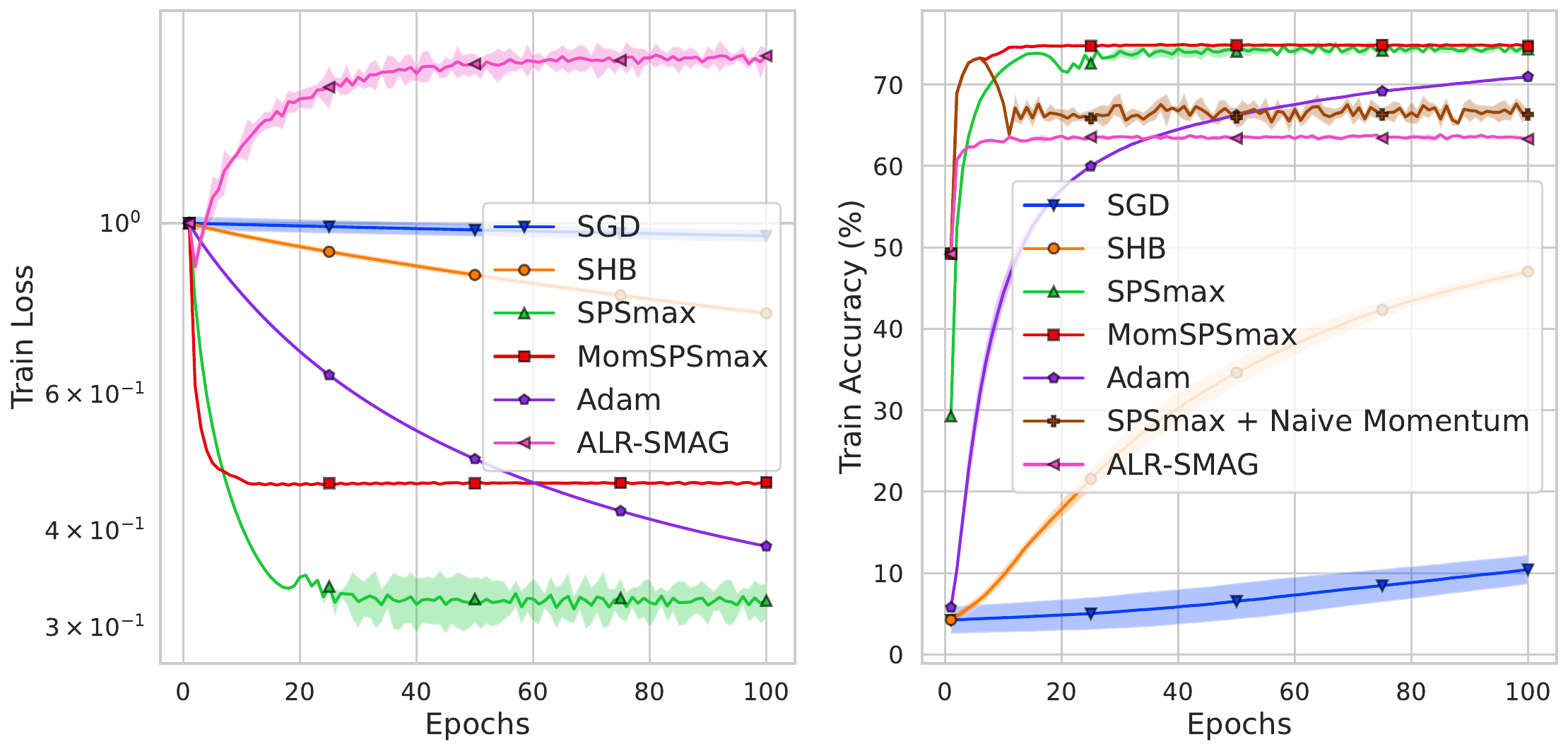}
	\caption{\small LibSVM dataset: letter, Batch size: 256}
	\label{fig:m_logreg_set_letter_bs_256_e_100}
\end{figure}
\end{minipage}

\begin{minipage}{0.48\textwidth}
\begin{figure}[H]
	\centering
	\includegraphics[width=\textwidth]{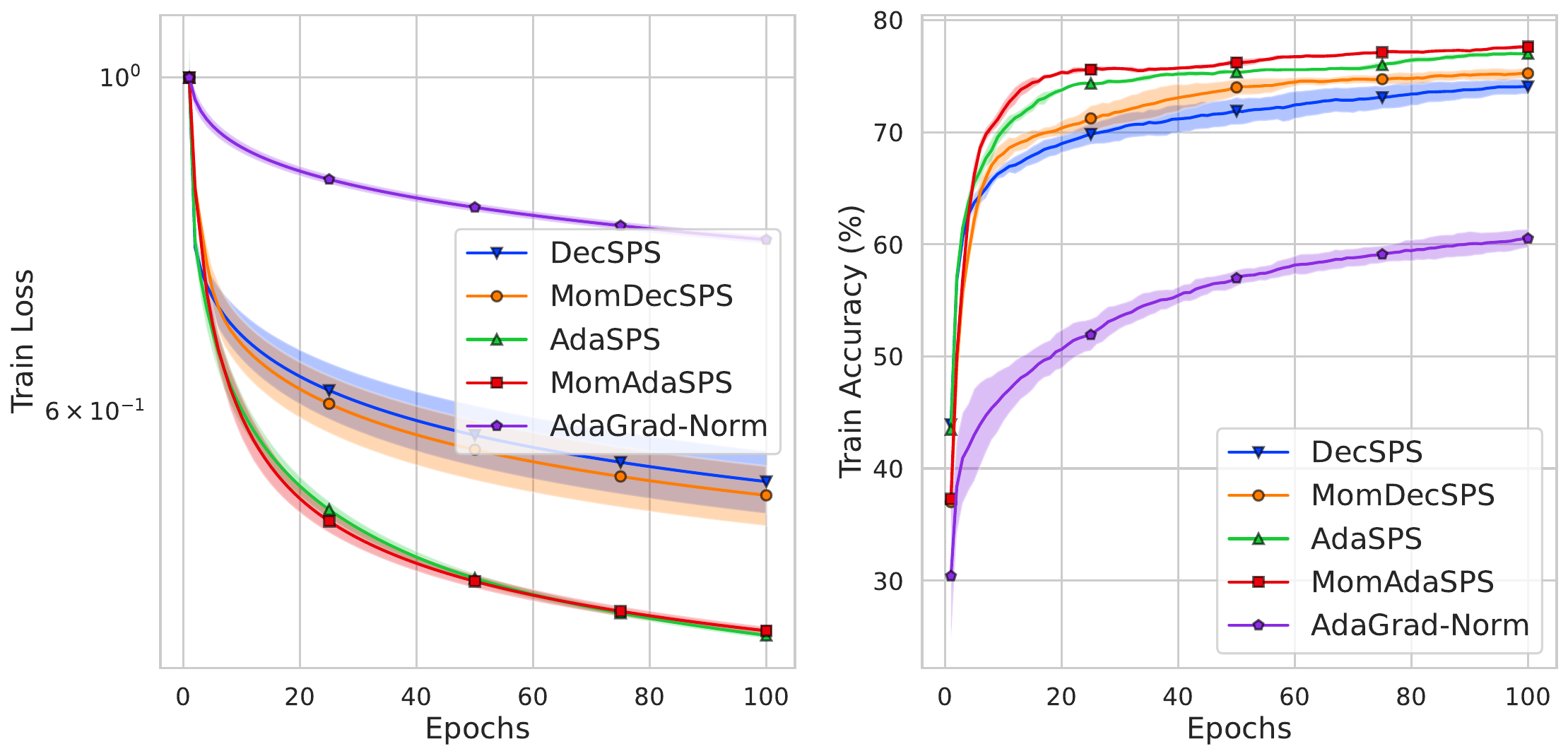}
	\caption{\small LibSVM dataset: vehicle, Batch size: 85}
	\label{fig:dec_m_logreg_set_vehicle_bs_85_e_100}
\end{figure}
\end{minipage}
~
\begin{minipage}{0.48\textwidth}
\begin{figure}[H]
	\centering
	\includegraphics[width=\textwidth]{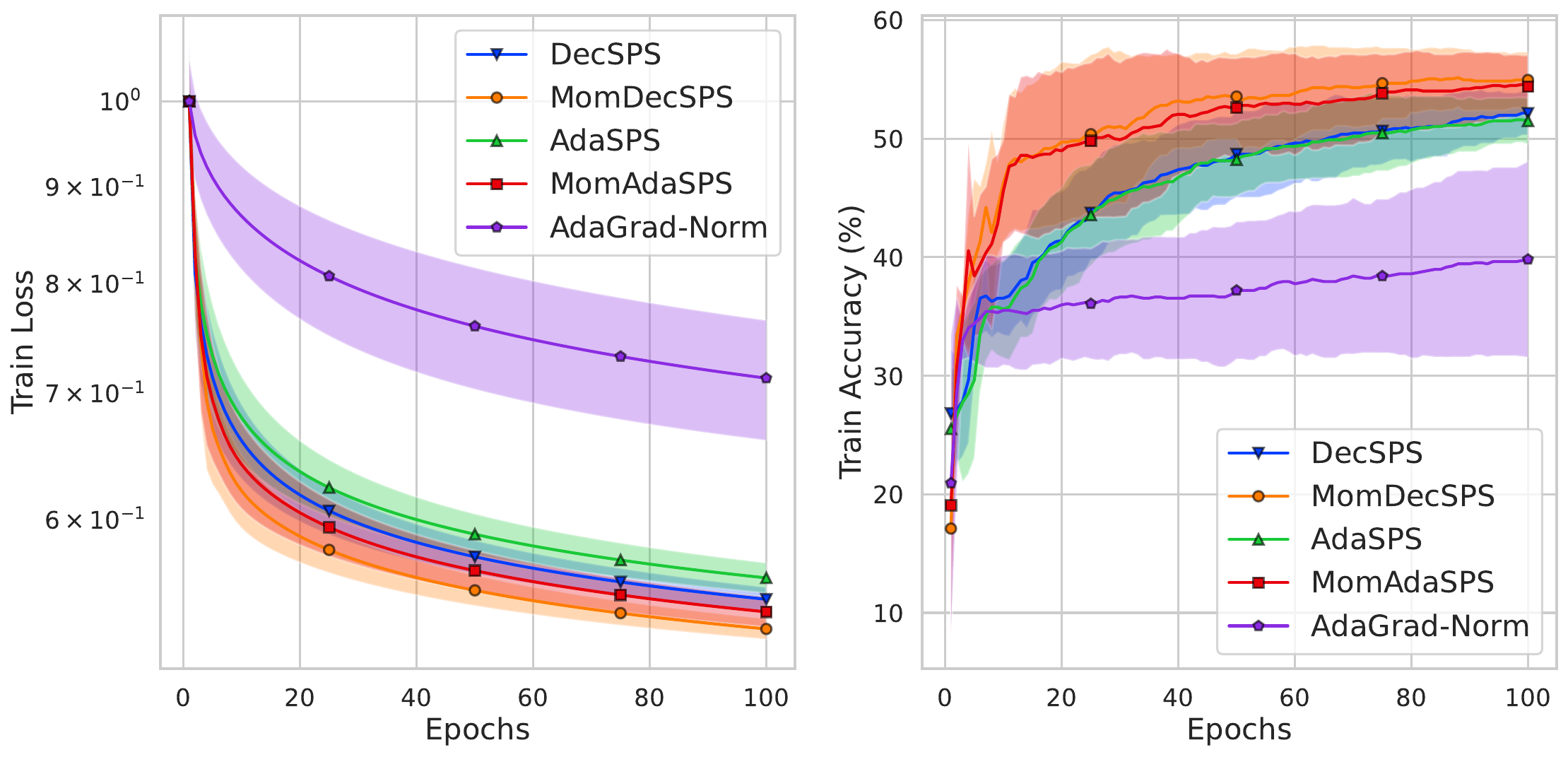}
	\caption{\small LibSVM dataset: glass, Batch size: 32}
	\label{fig:dec_m_logreg_set_glass_bs_32_e_100}
\end{figure}
\end{minipage}

\subsection{More deep learning experiments and parameter settings}
\label{sec:dl-exp}

In this section, we list the parameters, architectures and hardware that we used for the deep learning experiments. The information is collected in \Cref{tab:dl-params}. We also include some extra experiments in \Cref{fig:resnet18_set_cifar10_bs_256_e_100,fig:hresnet18_set_cifar100_bs_256_e_100}. For ALR-SMAG we use $c=0.1$ in the DL experiments. 

\begin{table}[H]
    \centering
    \begin{tabular}{ll}
        \toprule
        Hyper-parameter & Value \\
        \midrule
        Architecture & ResNet 10/34 \citep{he2016deep}\\
        GPUs & 1x Nvidia RTX 6000 Ada Generation\\
        Batch-size & 256 \\
        Epochs & 100 \\
        Trials & 5 \\
        Weight Decay & 0.0 \\
        \bottomrule
    \end{tabular}
    \caption{CIFAR10 experiment}
    \label{tab:dl-params}
\end{table}

\begin{minipage}{0.48\textwidth}
\begin{figure}[H]
	\centering
	\includegraphics[width=\textwidth]{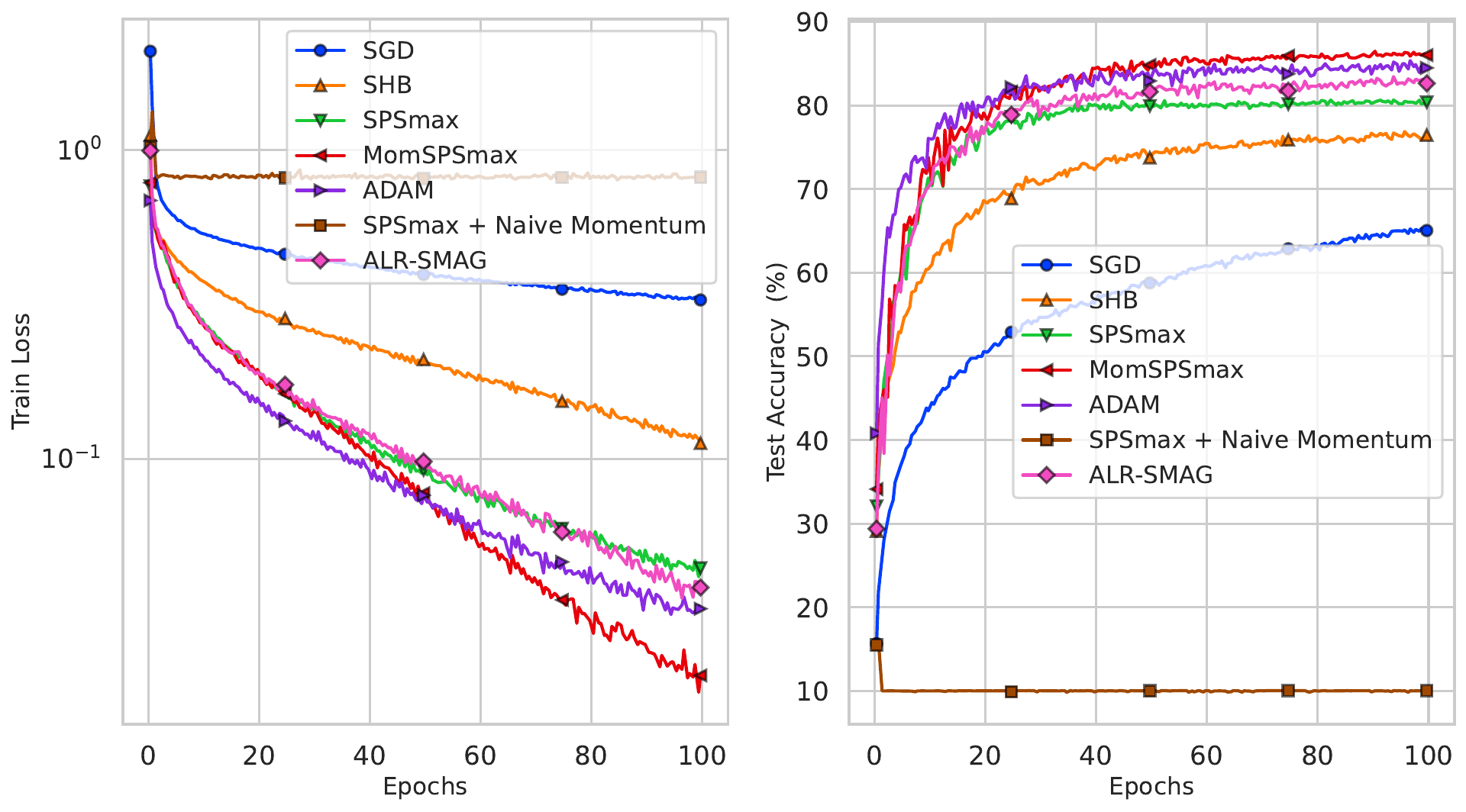}
    \caption{Resnet 18 on CIFAR 10}
    \label{fig:resnet18_set_cifar10_bs_256_e_100}
\end{figure}
\end{minipage}
~
\begin{minipage}{0.48\textwidth}
\begin{figure}[H]
	\centering
	\includegraphics[width=\textwidth]{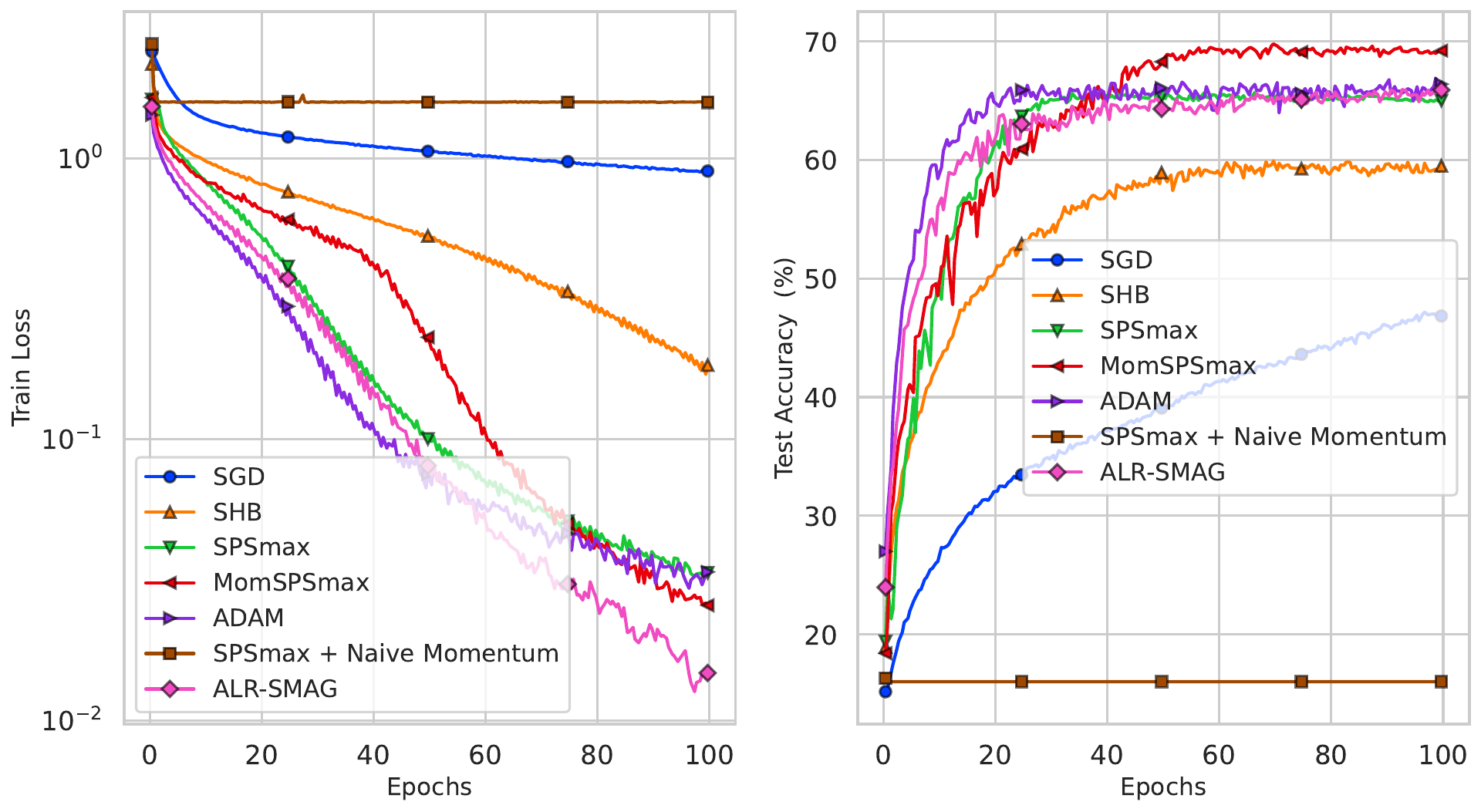}
    \caption{Resnet 18 on CIFAR 100}
	\label{fig:hresnet18_set_cifar100_bs_256_e_100}
\end{figure}
\end{minipage}

\end{document}